\newtheorem{thm}[subsection]{Theorem}
\newtheorem{defn}[subsection]{Definition}
\newtheorem{prop}[subsection]{Proposition}
\newtheorem{cor}[subsection]{Corollary}
\newtheorem{lemma}[subsection]{Lemma}
\newtheorem{remark}[subsection]{Remark}
\theoremstyle{definition}
\numberwithin{equation}{section}
\def\tpartial{{\partial_*}}
\def\bpartial{{\bar\partial}}
\def\cA{\mathcal A}
\def\cB{\mathcal B}
\def\cD{\mathcal D}
\def\cE{\mathcal E}
\def\cF{\mathcal F}
\def\cI{\mathcal I}
\def\cJ{\mathcal J}
\def\cM{\mathcal M}
\def\cR{\mathcal R}
\def\cS{\mathcal S}
\def\cT{\mathcal T}
\def\cW{\mathcal W}
\def\R{\mathfrak R}
\def\B{\mathfrak B}
\def\Zplus{\mathbb Z_{\geq 0}}
\DeclareMathOperator{\sign}{sign}
\DeclareMathOperator{\Hom}{Hom}
\DeclareMathOperator{\Spec}{Spec}
\newfont{\german}{eufm10}
\begin{document}
\pagestyle{plain}

\title{Standard monomials and invariant theory for arc spaces I: general linear group}

\author{Andrew R. Linshaw}
\address{Department of Mathematics, University of Denver}
\email{andrew.linshaw@du.edu}

\author{Bailin Song}
\address{School of Mathematical Sciences, University of Science and Technology of China, Hefei, Anhui 230026, P.R. China}
\email{bailinso@ustc.edu.cn}

\begin{abstract}
This is the first in a series of papers on standard monomial theory and invariant theory of arc spaces. For any algebraically closed field $K$, we construct a standard monomial basis for the arc space of the determinantal variety over $K$. As an application, we prove the arc space analogue of the first and second fundamental theorems of invariant theory for the general linear group. 
\end{abstract}

\keywords{standard monomial; invariant theory; arc space}

\maketitle
\section{Introduction}

\subsection{Classical invariant theory} Classical invariant theory has a long history that began in the 19th century in work of Cayley, Gordan, Klein, and Hilbert. Given an algebraically closed field $K$, a reductive algebraic group $G$ over $K$, and and a finite-dimensional $G$-module $W$, the ring of invariant polynomial functions $K[W]^G$ is the main object of study. It is often useful to consider invariant rings $K[V]^G$, where $V=W^{\oplus p}\bigoplus {W^*}^{\oplus q}$ is the direct sum of $p$ copies of $W$ and $q$ copies of the dual $G$-module $W^*$. In the terminology of Weyl, a {\it first fundamental theorem of invariant theory} (FFT) for the pair $(G,W)$ is a generating set for $K[V]^G$, and a {\it second fundamental theorem} (SFT) for $(G,W)$ is a generating set for the ideal of relations among the generators of $K[V]^G$. When $\text{char} \ K = 0$, if $G$ is one of the classical groups and $W$ is the standard representation, the FFTs and SFTs are due to Weyl 
\cite{W}. The analogous results in arbitrary characteristic were proven by de Concini and Procesi in \cite{DCP}. Explicit FFTs and SFTs are in general difficult to obtain and are known only in a few other cases, such as the adjoint representations of the classical groups which is due to Procesi \cite{P}, the $7$-dimensional representation of $G_2$ and the $8$-dimensional representation of $\text{Spin}_7$, which are due to Schwarz \cite{Sc}.

 The main example in this paper is the case where $G$ is the general linear group $GL_h(K)$ over $K$, and $W=K^{\oplus h}$ is its standard representation. For $V = W^{\oplus p}\bigoplus {W^*}^{\oplus q}$ as above, the affine coordinate ring is $$K[V]=K[a^{(0)}_{il}, b^{(0)}_{jl}|\ 1\leq i\leq p, \ 1\leq j\leq q,\ 1\leq l\leq h].$$

\begin{thm}\label{thm:classicalmain} (FFT and SFT for $G = GL_h(K)$ and $W= K^{\oplus h}$)
	\begin{enumerate}
		\item The ring of invariants $K[V]^{GL_h(K)}$ is generated by 
		$$\{X^{(0)}_{ij} = \sum_l a^{(0)}_{il}b^{(0)}_{jl}|\ 1\leq i\leq p, \ 1\leq j\leq q\}.$$
		\item The ideal of relations among the generators in (1) is generated by 
		$$ \left|
		\begin{array}{cccc}
		X^{(0)}_{u_1v_1} & X^{(0)}_{u_1v_2} & \cdots &X^{(0)}_{u_1v_{h+1}}\\
		X^{(0)}_{u_2v_1}& X^{(0)}_{u_2v_2} & \cdots & X^{(0)}_{u_2v_{h+1}} \\
		\vdots & \vdots & \vdots & \vdots \\
		X^{(0)}_{u_{h+1}v_1} & X^{(0)}_{u_{h+1}v_2} & \cdots & X^{(0)}_{u_{h+1}v_{h+1}} \\
		\end{array}
		\right|,
		$$ for all $u_1, u_2,\dots, u_h$ and $v_1,v_2,\dots, v_h$ with $1\leq u_i < u_{i+1} \leq p$ and $1\leq v_i < v_{i+1} \leq q$.
		\end{enumerate}
		\end{thm}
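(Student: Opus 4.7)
The plan is to identify $K[V]^{GL_h(K)}$ with the coordinate ring of the determinantal variety of $p\times q$ matrices of rank at most $h$, and then invoke standard monomial theory to establish the FFT and SFT simultaneously. Assembling the variables into matrices $A=(a^{(0)}_{il})$ and $B=(b^{(0)}_{jl})$, the generators $X^{(0)}_{ij}$ are precisely the entries of $AB^T$. First I would define the map $\phi\colon K[X^{(0)}_{ij}]\to K[V]$ sending $X^{(0)}_{ij}\mapsto\sum_l a^{(0)}_{il}b^{(0)}_{jl}$ and observe that its image lies in $K[V]^{GL_h(K)}$, since $AB^T$ is manifestly invariant under the action $A\mapsto Ag^{-1}$, $B\mapsto Bg^T$. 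The theorem reduces to showing that $\phi$ is surjective onto invariants with kernel equal to the ideal $I_{h+1}$ generated by the $(h+1)\times(h+1)$ minors in part (2).

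The inclusion $I_{h+1}\subseteq\ker\phi$ is an immediate application of the Cauchy--Binet formula: an $(h+1)\times(h+1)$ minor of $AB^T$ with row indices $I$ and column indices $J$ expands as $\sum_{|K|=h+1}\det(A_{I,K})\det(B_{J,K})$ summed over $K\subseteq\{1,\dots,h\}$, and this sum is vacuous. For the reverse inclusion and the surjectivity, I would use the standard monomial basis for the classical determinantal variety. For each Young shape $\lambda$ with at most $\min(p,q)$ rows and $\lambda_1\leq h$ columns, one associates to each standard bitableau of shape $\lambda$ (rows filled by strictly increasing tuples from $\{1,\dots,p\}$ on the left and $\{1,\dots,q\}$ on the right, with weak increase down columns) a product of minors of $(X^{(0)}_{ij})$. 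The three facts to establish are: (i) the standard monomials span $K[X^{(0)}]/I_{h+1}$, via a straightening algorithm that reduces non-standard pairs of minors using quadratic Pl\"ucker-type relations; (ii) their $\phi$-images are linearly independent in $K[V]$, via a leading-term argument in a suitable monomial order on $\{a^{(0)}_{il},b^{(0)}_{jl}\}$; and (iii) their images additionally span $K[V]^{GL_h(K)}$.

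The main obstacle is step (iii). In characteristic zero this is transparent from Schur--Weyl duality combined with the Cauchy identity: both $K[V]^{GL_h(K)}$ and the span of standard-monomial images decompose into matching direct sums of $GL_p\times GL_q$-modules $\mathbb{S}^\lambda(K^p)\otimes\mathbb{S}^\lambda((K^q)^*)$ over partitions with $\lambda_1\leq h$. In arbitrary characteristic, where semisimplicity is unavailable, one must follow the characteristic-free approach of De Concini--Procesi \cite{DCP}, which replaces direct-sum decomposition by an explicit filtration of $K[V]$ by ``shape-bounded'' subspaces compatible with the straightening algorithm. I would invoke this result directly rather than reconstruct the full combinatorial machinery, since the point of recalling Theorem \ref{thm:classicalmain} is to serve as the template for the arc space generalization.
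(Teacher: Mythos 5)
The paper itself does not prove Theorem \ref{thm:classicalmain}; it cites Weyl \cite{W} in characteristic zero and de Concini--Procesi \cite{DCP} in arbitrary characteristic, and observes that the standard-monomial-theoretic proof may be found in \cite{DCP} and \cite{LR}. Your sketch of that route --- Cauchy--Binet for $I_{h+1}\subseteq\ker\phi$, straightening for spanning of $K[X^{(0)}]/I_{h+1}$ by standard bitableaux, a leading-term argument for the independence of their $\phi$-images, and a characteristic-free filtration for surjectivity onto invariants --- is an accurate account, and deferring to \cite{DCP} for the last step is consistent with the paper's treatment of the classical theorem as recalled background.

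One thing worth knowing, since you flag step (iii) as the obstacle: there is a more elementary route to surjectivity, and it is the one the paper actually deploys for the arc space generalization (Lemma \ref{lemma:equalGLinvariant} and Theorem \ref{thm:JGLinvariant}). Assume first $p,q\geq h$ and localize at the image $\Delta$ of the leading $h\times h$ minor of $(X^{(0)}_{ij})$, which factors as $\det(a^{(0)}_{il})_{1\le i,l\le h}\cdot\det(b^{(0)}_{jl})_{1\le j,l\le h}$; the slice $H=\{a^{(0)}_{il}=\delta_i^l,\ 1\leq i,l\leq h\}$ yields a $G$-equivariant isomorphism $G\times H\cong V_\Delta$, so $K[V_\Delta]^G\cong K[H]$, which one identifies with the localization of $\im\phi$ at $\Delta$; a leading-term descent in the standard monomial basis (exactly as in the proof of Theorem \ref{thm:JGLinvariant}) then shows that $K[V]^G=K[V]\cap K[V_\Delta]^G$ is the unlocalized subring $\im\phi$. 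The case of general $p,q$ follows by embedding into larger $p,q$ and projecting, as in the last paragraph of that proof. This slice argument avoids the representation-theoretic input of \cite{DCP} entirely, and is the more faithful template for the arc space setting, where it persists unchanged while Schur--Weyl duality and the Cauchy identity do not.
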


\subsection{Standard monomial theory} Standard monomial theory was initiated in the 1970s by Seshadri, Musili and Lakshmibai \cite{Se, LS,LMS1,LMS2}, generalizing earlier work of Hodge \cite{H}. It involves nice combinatorial bases for the affine coordinate rings of Schubert varieties inside quotients of classical groups by parabolic subgroups. In this paper, we only need the case of determinantal varieties.

For positive integers $p$ and $q$, let 
\begin{equation} \label{def:ringR} R= R_{p,q} = \mathbb Z[x^{(0)}_{ij}|\ 1\leq i\leq p, \ 1\leq j\leq q],\end{equation} be the ring of polynomial functions with integer coefficients on the space of $p\times q$ matrices. Consider the $h$-minor 
\begin{equation}\label{eqn:minor} B=\left|
\begin{array}{cccc}
x^{(0)}_{u_1v_1} & x^{(0)}_{u_1v_2} & \cdots &x^{(0)}_{u_1v_h}\\
x^{(0)}_{u_2v_1}& x^{(0)}_{u_2v_2} & \cdots & x^{(0)}_{u_2v_h} \\
\vdots & \vdots & \vdots & \vdots \\
x^{(0)}_{u_hv_1} & x^{(0)}_{u_hv_2} & \cdots & x^{(0)}_{u_hv_h} \\
\end{array}
\right|,
\end{equation}
with $u_i<u_{i+1}$, $v_i<v_{i+1}$. Throughout this paper, we will represent $B$ by the pair of ordered $h$-tuples
$$(u_h,\dots,u_2,u_1|v_1,v_2,\dots,v_h).$$ There is a partial ordering on the set of these minors given by
\begin{equation*} \begin{split} & (u_h,\dots,u_2,u_1|v_1,v_2,\dots,v_h)\leq (u'_{h'},\dots,u'_2,u'_1|v'_1,v'_2,\dots,v'_{h'}), 
\\ & \text{if } \ h'\leq h, \ u_i\leq u'_i, \ v_i\leq v'_i.\end{split} \end{equation*}
$R$ has a standard monomial basis (cf.~\cite{LR}) with
respect to this partially ordered set of minors: the ordered products $A_1A_2\cdots A_k$ of minors $A_i$ with $A_i\leq A_{i+1}$, form a basis of $R$. Similarly, let $R[h]$ be the ideal of $R$ generated by all $h$-minors in the form of (\ref{eqn:minor}), and let 
\begin{equation} \label{def:ringR_h} R_h=R\slash R[h+1].\end{equation} Then $R_h$ has a basis consisting of ordered products $A_1A_2\cdots A_k$ of $h_i$-minors $A_i$ with $h_i < h$ and $A_i\leq A_{i+1}$.

 For an arbitrary algebraically closed field $K$, let $M_{p,q} = M_{p,q}(K)$ be the space of $p\times q$ matrices with entries in $K$. The affine coordinate ring  $K[M_{p,q}]$ is obtained from $R$ by base change, that is, $K[M_{p,q}]\cong R\otimes_{\mathbb{Z}} K$. Let $K[M_{p,q}][h]$ be the ideal generated by all $h$-minors. The determinantal variety $D_h = D_h(K)$ is a closed subvariety of $M_{p,q}$ with $K[M_{p,q}][h]$ as the defining ideal. Then the affine coordinate ring $K[D_h] \cong K[M_{p,q}]\slash K[M_{p,q}][h]$, has a standard monomial basis: the ordered products $A_1A_2\cdots A_k$ of $h_i$-minors $A_i$ with $h_i<h$ and $A_i\leq A_{i+1}$ form a basis of $K[D_h]$. With $G = GL_h(K)$ and $V$ as in Theorem \ref{thm:classicalmain}, we have $V/\!\!/G \cong D_{h+1}$, and the proof of Theorem \ref{thm:classicalmain} in \cite{DCP} makes use of this standard monomial basis. A uniform treatment of the FFT and SFT for all the classical groups using standard monomial theory can also be found in the book \cite{LR}.

 \subsection{Arc spaces} 
 
 For a scheme $X$ of finite type over $K$, the arc space $J_\infty(X)$ is defined as the inverse limit of the finite jet schemes $J_n(X)$ \cite{EM}. By Corollary 1.2 of \cite{B}, it is determined by its functor of points : for every $K$-algebra $A$, we have a bijection
$$\Hom(\Spec  A, J_\infty(X))\cong\Hom(\Spec A[[t]], X).$$ If $i: X\to Y$ is a morphism of schemes, we get a morphism of schemes $i_{\infty}:J_{\infty}(X)\to J_{\infty}(Y)$. Arc spaces were first studied by Nash in \cite{Na}, and carry important information about the singularities of $X$. The {\it Nash problem} asks whether there is a bijection between the irreducible components of $J_{\infty}(X)$ lying over the singular locus of $X$, and the essential divisors over $X$. This has been answered affirmatively for many classes of varieties, although counterexamples are known \cite{IK}. Arc spaces are also important in Kontsevich's theory of {\it motivic integration}, which was used to prove that birationally equivalent Calabi-Yau manifolds have the same Hodge numbers \cite{Kon}. This theory has been developed by many authors including Batyrev, Craw, Denef, Ein, Loeser, Looijenga, Mustata, and Veys; see for example \cite{Bat,Cr,DL1,DL2,EM,Loo,M,Ve}. More recently, arc spaces have turned out to have applications to the theory of vertex algebras, which in many cases can be viewed as quantizations of arc spaces \cite{A1,AKS,AM,LSSII,S1,S2,S3}.

 \subsection{Standard monomials for arc spaces} Let 
 \begin{equation} \label{def:ringRR} \R=\R_{p,q}=\mathbb Z[x^{(k)}_{ij}|\ 1\leq i\leq p, \ 1\leq j\leq q,\ k\geq 0],\end{equation} which has a derivation $\partial$ characterized by $\partial x_{ij}^{(k)}=(k+1)x_{ij}^{(k+1)}$. It can be regarded as the ring of polynomial functions with integer coefficients on the arc space of $p\times q$ matrices; in particular, $K[J_{\infty}(M_{p,q})] \cong \R \otimes_{\mathbb{Z}} K$.
 
Let $\R[h]$ be the ideal of $\R$ generated by all $h$-minors $B$ of the form (\ref{eqn:minor}) and their normalized derivatives $\frac 1 {n!}\partial^n B$.  Let 
\begin{equation} \label{def:ringRR_h} \R_h=\R\slash\R[h+1].\end{equation} Let $\cJ_r$ be the set of $h$-minors of the form (\ref{eqn:minor}) with $h\leq r$ and their normalized derivatives. Note that $R$ and $R_h$ are naturally subrings of $\R$ and $\R_h$, respectively. In Section \ref{sect:standardmonomial}, we will define a notion of standard monomial on $\R_h$ that extends the above notion on $R_h$, and we will prove the following result.

\begin{thm}\label{thm:standard} $\R_h$ has a $\mathbb Z$-basis given by the standard monomials of $\cJ_h$.
\end{thm}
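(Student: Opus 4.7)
The plan is to prove spanning and linear independence separately, organizing both arguments by the $\mathbb{Z}_{\geq 0}$-grading on $\R$ defined by $\wt(x^{(k)}_{ij})=k$. Since $\partial$ raises weight by one, $\R[h+1]$ is a homogeneous ideal, so $\R_h$ inherits this grading with each weighted piece a finitely generated $\mathbb{Z}$-module. A product $\prod_i \frac{1}{n_i!}\partial^{n_i} A_i$ of normalized derivatives of minors has weight $\sum n_i$, so both sides of the claimed basis identity are graded and it suffices to prove the identity in each weight. The notion of standard monomial on $\cJ_h$ should refine the classical partial order on minors by incorporating the derivative exponents (for instance lexicographically in $(\text{underlying minor}, n)$), so that the weight-zero standard monomials of $\cJ_h$ recover exactly the classical standard monomials of $R_h$.

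For spanning, the key point is that $\R[h+1]$ is $\partial$-stable by construction. Consequently every classical straightening relation $AB\equiv \sum_i c_i S_i\pmod{R[h+1]}$ between minors lifts to $\R$, and applying $\frac{1}{n!}\partial^n$ produces a new relation modulo $\R[h+1]$. The left-hand side expands by the normalized Leibniz rule as $\sum_{a+b=n}\frac{1}{a!b!}\partial^a A\cdot\partial^b B$, and the right-hand side becomes a $\mathbb{Z}$-combination of products of normalized derivatives of standard monomials. The definition of standard on $\cJ_h$ should be arranged so that in any such identity the highest non-standard product in the refined order can be solved in terms of strictly smaller (or strictly lower-weight) terms. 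An induction on weight, with base case the classical fact that standard monomials span $R_h$, then shows that every monomial in elements of $\cJ_h$ reduces to a $\mathbb{Z}$-combination of standard monomials of $\cJ_h$ in $\R_h$.

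Linear independence is the harder half. My primary attempt would be a Hilbert-series argument: compute the graded character of $\R_h$ directly, for example by producing an explicit Gröbner or flat degeneration of $J_\infty(D_{h+1})$ to a toric-type scheme whose coordinate ring has a transparent monomial basis, and compare it with the combinatorial generating function counting standard monomials of $\cJ_h$ by weight. Equality of these two series combined with spanning would yield the basis statement. An appealing alternative is to construct a weighted monomial order on $\R$ adapted to $\partial$ under which the standard monomials of $\cJ_h$ have pairwise distinct leading monomials that remain distinct modulo the initial ideal of $\R[h+1]$; such an order would simultaneously provide an effective straightening algorithm and the independence claim.

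The main obstacle is precisely this independence step. The spanning argument is essentially bookkeeping built from differentiating the classical straightening law and inducting on weight, and does not require new structural input. Independence, by contrast, demands either a combinatorially controlled degeneration of the arc space (reducing the problem to counting monomials in a simpler ring) or a derivation-compatible monomial order on $\R$ fine enough to separate all standard monomials of $\cJ_h$. Finding and verifying either ingredient, and checking that the resulting numerical invariants match the combinatorial count of standard monomials on $\cJ_h$, is the heart of the proof.
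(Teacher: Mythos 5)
Your spanning plan is broadly in the right direction but understates what is actually needed, and your independence plan misses the paper's key idea.

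On spanning, the paper does reduce any non-standard product $J_1\cdots J_b$ to smaller monomials by induction (Lemmas \ref{lem:base0}--\ref{lem:base1}), but the relations used are not simply $\bpartial^n$ applied to classical straightening laws. The paper's Lemma \ref{lem:fullrelation0} produces a \emph{family} of relations in $\R[h+1]$ depending on free integer parameters $a_k$, and the whole point is that one can prescribe the values $a_{k}$ on a window of length $l_0=i_1+i_2+j_1+j_2-2h-1$ and solve for the rest. This freedom is what lets the proof of Lemma \ref{lemma:straight1} kill unwanted terms (setting $a_{n_b-l}=\delta^0_l$) so that after straightening, every surviving factor is either strictly smaller than $J_a$ or fails to be greater than $E_{b-2}$; a single normalized derivative of a classical Pl\"ucker relation does not give enough control over which products survive. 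You also should not expect to induct merely on weight: the induction in the paper is on the lexicographic order on $\cM(\cJ)$ refined by size, weight, and then the index sequences, and the recursive step genuinely needs to pass information (via $E_{b-2}$) from earlier factors, which your weight-only induction cannot see.

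On independence, neither of your two proposals is what the paper does, and both would be considerably harder to carry out. Rather than computing a Hilbert series or degenerating the arc space, the paper constructs an explicit ring map $\tilde Q_h:\R\to\B=\mathbb Z[a^{(k)}_{il},b^{(k)}_{jl}]$ sending $x^{(k)}_{ij}\mapsto\bpartial^k\sum_l a^{(0)}_{il}b^{(0)}_{jl}$; this kills $\R[h+1]$ and so descends to $Q_h:\R_h\to\B$. The target $\B$ is an honest polynomial ring, where one can define a double-tableau monomial order, and Lemma \ref{lem:leadingterm} shows that the leading monomial of $Q_h(J_1\cdots J_m)$ for a standard monomial $J_1\cdots J_m$ is exactly the tableau $T(E_1\cdots E_m)$ built from the associated element of $\cS\cM(\cE_h)$, with coefficient $\pm1$. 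Distinctness of these leading tableaux gives linear independence of $\{Q_h(J)\,|\,J\in\cS\cM(\cJ_h)\}$ in $\B$, hence of $\cS\cM(\cJ_h)$ in $\R_h$. Your suggested monomial order ``on $\R$'' is in the wrong ring: working directly in $\R_h$ one would have to fight the ideal $\R[h+1]$, whereas pushing forward to $\B$ sidesteps this entirely. This transfer-to-a-polynomial-ring step is the missing structural input you were looking for; once you have it, independence reduces to the tableau combinatorics of Lemma \ref{lem:leadingterm}, and as a bonus it also proves $Q_h$ injective (Theorem \ref{thm:standmonomial1}), which is needed later for the invariant-theory application.
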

Let $J_\infty(D_h)$ be the arc space of the determinantal variety  $D_h$. Then the affine coordinate ring $K[J_\infty(D_h)]$ is $\R_{h-1}\otimes_{\mathbb{Z}} K$, so we immediately have
\begin{cor}\label{cor:standarddeterm} $K[J_\infty(D_h)]$ has a $K$-basis given by the standard monomials of $\cJ_{h-1}$. \end{cor}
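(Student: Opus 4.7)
The plan is to deduce this as an immediate base-change consequence of Theorem \ref{thm:standard}. The two ingredients are: the identification of the affine coordinate ring of $J_\infty(D_h(K))$ with $\R_{h-1} \otimes_{\mathbb{Z}} K$ (which is already asserted in the excerpt), and the elementary fact that a $\mathbb{Z}$-basis of a free $\mathbb{Z}$-module passes to a $K$-basis after tensoring with $K$.

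For the first ingredient, I would justify the identification via the functor-of-points description of the arc space. A map $\Spec A \to J_\infty(D_h(K))$ is a $K$-algebra homomorphism $\varphi \colon R^K/R^K[h] \to A[[t]]$, equivalently a homomorphism $R^K \to A[[t]]$ killing every $h$-minor $B$. Writing $\varphi(x^{(0)}_{ij}) = \sum_{k \geq 0} a^{(k)}_{ij} t^k$, the condition $\varphi(B) = 0$ in $A[[t]]$ amounts to the vanishing of every coefficient of $t^n$. Under the identification of $x^{(k)}_{ij}$ with the coordinate $a^{(k)}_{ij}$ on the arc space, together with the convention $\partial x^{(k)}_{ij} = (k+1)x^{(k+1)}_{ij}$ (which makes $\partial$ correspond to $d/dt$), this coefficient is precisely $\frac{1}{n!}\partial^n B$. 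Hence the defining ideal of $J_\infty(D_h(K))$ inside $\R \otimes_{\mathbb{Z}} K$ is exactly the ideal generated by the $h$-minors and their normalized derivatives, so the coordinate ring is $\R_{h-1} \otimes_{\mathbb{Z}} K$.

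For the second ingredient, Theorem \ref{thm:standard} asserts that $\R_{h-1}$ is a free $\mathbb{Z}$-module with basis given by the standard monomials in $\cJ_{h-1}$. Since free $\mathbb{Z}$-modules are flat, $\R_{h-1} \otimes_{\mathbb{Z}} K$ is a free $K$-module with basis $\{S \otimes 1\}$ as $S$ runs over the standard monomials in $\cJ_{h-1}$. Combining this with the first step yields the desired $K$-basis.

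There is essentially no obstacle here: the substantive content lies entirely in Theorem \ref{thm:standard}, and the corollary is a formal base-change statement. One minor subtlety worth noting is that tensoring with $K$ need not be exact when $K$ has positive characteristic; however, because $\R_{h-1}$ is $\mathbb{Z}$-free, the short exact sequence $0 \to \R[h] \to \R \to \R_{h-1} \to 0$ remains exact after applying $-\otimes_{\mathbb{Z}} K$, so $(\R/\R[h]) \otimes_{\mathbb{Z}} K$ really does coincide with the quotient of $\R \otimes_{\mathbb{Z}} K$ by the ideal generated by the $h$-minors and their normalized derivatives.
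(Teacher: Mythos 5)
Your proof is correct and takes essentially the same route as the paper: identify the coordinate ring of $J_\infty(D_h(K))$ with $\R_{h-1}\otimes_{\mathbb Z}K$ via the functor-of-points description (the paper does this in the unnamed proposition at the start of Section 4 and the paragraph preceding the corollary), then invoke Theorem \ref{thm:standard} and base change. Your remark about exactness of $-\otimes_{\mathbb Z}K$ being automatic from $\mathbb Z$-freeness of $\R_{h-1}$ is a sound way to justify the identification $(\R/\R[h])\otimes K\cong \R^K/\R^K[h]$, which the paper asserts without elaboration.
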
 
  
When $K = \mathbb{C}$, the arc space $J_\infty(D_h)$, as well as the finite jet schemes $J_n(D_h)$, were also studied by Docampo in \cite{D}. He gave an explicit description of the decomposition of  $J_\infty(D_h)$ and  $J_n(D_h)$ as a union of orbits for the action of $J_{\infty}(GL_p(\mathbb{C}) \times GL_q(\mathbb{C}))$ and $J_{n}(GL_p(\mathbb{C}) \times GL_q(\mathbb{C}))$, respectively.

\subsection{Application in invariant theory}
Given an algebraic group $G$ over $K$, $J_{\infty}(G)$ is again an algebraic group. If $V$ is a finite-dimensional $G$-module, there is an induced action of $J_{\infty}(G)$ on $J_{\infty}(V)$, and the invariant ring $K[J_{\infty}(V)]^{J_{\infty}(G)}$ was studied in our earlier paper \cite{LSSI} with Schwarz in the case $K = \mathbb{C}$. 
The quotient morphism $V\to V/\!\!/G$ induces a morphism $J_\infty(V)\to J_\infty(V/\!\!/G)$, so we have a morphism \begin{equation} \label{equ:invariantmap} J_\infty(V)/\!\!/J_\infty(G)\to J_\infty(V/\!\!/G).\end{equation}
In particular, we have a ring homomorphism
\begin{equation} \label{equ:invariantringmap} K[J_{\infty}(V/\!\!/G)] \rightarrow K[J_{\infty}(V)]^{J_{\infty}(G)}.\end{equation} If $V/\!\!/G$ is smooth or a complete intersection and $K[V]$ has no nontrivial one-dimensional $G$-invariant subspaces, it was shown in \cite{LSSI} that \eqref{equ:invariantringmap} is an isomorphism, although in general it is neither injective nor surjective.

We specialize to the case $G=GL_h(K)$, $W=K^{\oplus h}$, and $V=W^{\oplus p}\bigoplus {W^*}^{\oplus q}$, as above. Then  $$K[J_{\infty}(V)] =K[a^{(k)}_{il}, b^{(k)}_{jl}|\ 1\leq i\leq p, \ 1\leq j\leq q,\ 1\leq l\leq h, \ k\in \Zplus],$$ which has an induced action of $J_\infty(GL_h(K))$ as above. We have the following theorem, which is the arc space analogue of Theorem \ref{thm:classicalmain}.
\begin{thm}\label{thm:main}
	Fix integers $h\geq 1$ and $p,q \geq 0$, and let $W = K^{\oplus h}$ and $V=W^{\oplus p}\bigoplus {W^*}^{\oplus q}$ be as above. Let $\bar\partial^k = \frac{1}{k!} \partial^k$ be the $k^{\text{th}}$ normalized derivative.
	\begin{enumerate}
		\item The ring of invariants $K[J_{\infty}(V)]^{J_\infty(GL_h(K))}$ is generated by 
		\begin{equation}\label{eqn:generators} \{X^{(k)}_{ij} = \bpartial^k\sum_l a^{(0)}_{il}b^{(0)}_{jl}|\ 1\leq i\leq p, \ 1\leq j\leq q, \ k \geq 0\}.\end{equation}
		\item The ideal of relations among the generators \eqref{eqn:generators} is generated by 
		\begin{equation}\label{eqn:minorX} \bpartial^k\left|
		\begin{array}{cccc}
		X^{(0)}_{u_1v_1} & X^{(0)}_{u_1v_2} & \cdots &X^{(0)}_{u_1v_{h+1}}\\
		X^{(0)}_{u_2v_1}& X^{(0)}_{u_2v_2} & \cdots & X^{(0)}_{u_2v_{h+1}} \\
		\vdots & \vdots & \vdots & \vdots \\
		X^{(0)}_{u_{h+1}v_1} & X^{(0)}_{u_{h+1}v_2} & \cdots & X^{(0)}_{u_{h+1}v_{h+1}} \\
		\end{array}
		\right|,
		\end{equation} for all $u_1, u_2,\dots, u_h$ and $v_1,v_2,\dots, v_h$ with $1\leq u_i < u_{i+1} \leq p$ and $1\leq v_i < v_{i+1} \leq q$, and all integers $k\geq 0$.
		\item  $K[J_{\infty}(V)]^{J_\infty(GL_h(K))}$ has a $K$-basis given by standard monomials of $\cJ_h$.
	\end{enumerate}
\end{thm}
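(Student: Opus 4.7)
The plan is to reduce all three parts to proving that the natural homomorphism $\phi\colon\R^K_h \to \B^K$ induced by the quotient $V \to V/\!\!/G \cong D_{h+1}(K)$ is a ring isomorphism onto ${\B^K}^{J_\infty(GL_h(K))}$. Once this is established, Part~(3) follows from Corollary \ref{cor:standarddeterm}, and Parts~(1) and~(2) follow from the presentation of $\R^K_h$ by $(h+1)$-minors and their normalized derivatives. The map $\phi$ sends $x^{(k)}_{ij}$ to $\sum_{k_1+k_2=k}\sum_l a^{(k_1)}_{il}b^{(k_2)}_{jl}$, which agrees up to an integer factor with $X^{(k)}_{ij}$ and is the unique derivation-equivariant lift of $x^{(0)}_{ij}\mapsto X^{(0)}_{ij}$. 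It is well defined because every $(h+1)$-minor of $(\sum_l a_{il}b_{jl})_{ij}$ vanishes identically (the product has rank at most $h$), and consequently so do all of its normalized derivatives; the image lies in the invariant ring because $X^{(0)}_{ij}$ is $GL_h(K)$-invariant and $\bpartial$ commutes with the $J_\infty(GL_h(K))$-action.

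Injectivity of $\phi$ amounts to showing that the standard monomials of $\cJ_h$ remain $K$-linearly independent after mapping into $\B^K$. The route is to equip $\B^K$ with a suitable monomial order---for instance a graded-lexicographic order that first compares total arc weight and then position indices---so that each generator $\bpartial^r B$ of $\cJ_h$ has a readily identifiable ``shifted antidiagonal'' leading term in the variables $a^{(\bullet)}_{il}, b^{(\bullet)}_{jl}$. A straightening argument in the spirit of Doubilet--Rota--Stein then shows that distinct standard products have distinct leading monomials in $\B^K$, so that their images are linearly independent.

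The hard part will be surjectivity: every $J_\infty(GL_h(K))$-invariant must lie in the image of $\phi$. The proposed two-step strategy is first to restrict to constant arcs $GL_h(K) \subset J_\infty(GL_h(K))$ and apply the classical de Concini--Procesi FFT to the $GL_h(K)$-module $\bigoplus_{k\geq 0}V$, which realizes ${\B^K}^{GL_h(K)}$ as a polynomial ring on the quadratic invariants $Y^{(k_1,k_2)}_{ij}=\sum_l a^{(k_1)}_{il}b^{(k_2)}_{jl}$ modulo the classical SFT relations; and second to impose invariance under the ``higher'' arc subgroup, the kernel of evaluation $J_\infty(GL_h(K))\twoheadrightarrow GL_h(K)$. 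Working with the power series $Y(t)=a(t)b(t)^T$, this higher invariance should eliminate everything except the coefficients of $Y(t)$ itself, i.e.\ the combinations $\sum_{k_1+k_2=k}Y^{(k_1,k_2)}_{ij}$, which are exactly the $X^{(k)}_{ij}$ up to scalar. Making this rigorous in arbitrary characteristic is the principal obstacle, since a Reynolds operator is unavailable and the image of $J_\infty(V)\to J_\infty(V/\!\!/G)$ is only a constructible subset in general. My plan is to combine the already-established injectivity with a graded Hilbert-series comparison: bigrading both sides by total degree in $(a,b)$ and by arc weight, it suffices to bound the Hilbert series of ${\B^K}^{J_\infty(GL_h(K))}$ above by that of $\R^K_h$; thanks to the $\mathbb{Z}$-flatness furnished by Theorem \ref{thm:standard}, this bound need only be verified in characteristic zero, where a Molien-type calculation against the $J_\infty(GL_h(\mathbb{C}))$-action should close the argument.
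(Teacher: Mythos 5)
Your injectivity argument is essentially the paper's: the paper maps $\R_h$ into $\B$ via $Q_h$, orders monomials of $\B$ via double tableaux, and shows (Lemma \ref{lem:leadingterm}) that each standard monomial of $\cJ_h$ maps to a polynomial with a unique leading tableau $T(E_1\cdots E_m)$ with coefficient $\pm1$, so the images of the standard monomials are $K$-independent. Your ``shifted antidiagonal leading term'' plus straightening is the same idea. Also note that $\phi(x^{(k)}_{ij})$ equals $X^{(k)}_{ij}$ exactly, not merely up to an integer factor, since $\bpartial^{k_1}a^{(0)}_{il}=a^{(k_1)}_{il}$.

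The gap is in surjectivity, and it is genuine. The paper's route is geometric and combinatorial: for $p,q\geq h$ it localizes at the determinant $\Delta$, exhibits a $G$-equivariant isomorphism $G\times H\cong V_\Delta$ over the slice $H=\{a_{il}=\delta^l_i\}$, takes arc spaces to get $G_\infty\times J_\infty(H)\cong J_\infty(V)_\Delta$ and hence ${\B^K_\Delta}^{G_\infty}=\C^K_\Delta$ (Lemma \ref{lemma:equalGLinvariant}); then it removes the localization by an induction on leading monomials that again uses the standard monomial basis (Theorem \ref{thm:JGLinvariant}); and for $p<h$ or $q<h$ it passes to $p+h,q+h$ and splits off the extra ideal. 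This argument is uniform in characteristic and avoids a Reynolds operator entirely.

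Your proposed replacement has two problems that the sketch does not resolve. First, $\mathbb{Z}$-flatness of $\R_h$ (Theorem \ref{thm:standard}) controls the Hilbert series of $\R^K_h$ across characteristics, but it says nothing about the Hilbert series of ${\B^K}^{J_\infty(GL_h(K))}$: invariant rings of group schemes can strictly grow when reduced modulo $p$, so the ``bound need only be verified in characteristic zero'' step does not follow from the flatness you cite. To make that reduction you would need to know in advance that the invariant ring itself is the base change of a $\mathbb{Z}$-flat ring, which is essentially what is being proved. Second, even in characteristic zero the Molien-type computation against $J_\infty(GL_h(\mathbb{C}))$ is not a routine step: $J_\infty(GL_h)$ is an infinite-dimensional proalgebraic group with an infinite prounipotent radical (the kernel of evaluation at $t=0$), so the usual Molien/Weyl integration machinery is not directly available, and the claim that ``higher invariance should eliminate everything except the coefficients of $Y(t)$'' is precisely the content that needs proof, not a reduction of it. The paper's slice argument over the open set $\Delta\neq0$, combined with the standard monomial straightening to descend from $\C^K_\Delta$ to $\C^K$, is the concrete mechanism that replaces all of this, and it is where the substance of the surjectivity proof lives. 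If you want to pursue your two-step strategy, you would need to supply (a) a characteristic-free substitute for the Reynolds operator on the prounipotent part, or (b) an independent proof that the invariant ring is $\mathbb{Z}$-flat; neither is present in the sketch.
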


\begin{cor}\label{cor:quotient} For all $h \geq 1$ and $p,q \geq 0$, the map $K[J_{\infty}(V/\!\!/GL_h(K))] \rightarrow K[J_{\infty}(V)]^{J_{\infty}(GL_h(K))}$ given by \eqref{equ:invariantringmap} is an isomorphism. In particular, we have
$$J_\infty(V)/\!\!/J_\infty(GL_h(K))\cong J_\infty(V/\!\!/GL_h(K)).$$
\end{cor}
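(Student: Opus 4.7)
The plan is to show that both sides of the ring map \eqref{equ:invariantringmap} admit a common $K$-basis of standard monomials of $\cJ_h$, and that the map identifies these bases. All the substantive work has already been done in Theorem~\ref{thm:main} and Corollary~\ref{cor:standarddeterm}; the argument is essentially a definition chase.

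First I would use Theorem~\ref{thm:classicalmain} to identify $V/\!\!/GL_h(K)$ with the determinantal variety $D_{h+1}(K)$, the isomorphism being given by $x^{(0)}_{ij}\mapsto X^{(0)}_{ij}=\sum_l a^{(0)}_{il}b^{(0)}_{jl}$ and the ideal of relations being generated by all $(h+1)$-minors. Consequently $K[V/\!\!/GL_h(K)] \cong R^K/R^K[h+1]$, and passing to arc spaces gives $K[J_\infty(V/\!\!/GL_h(K))]\cong K[J_\infty(D_{h+1}(K))]$. By Corollary~\ref{cor:standarddeterm} (applied with $h$ replaced by $h+1$), this ring has a $K$-basis consisting of standard monomials of $\cJ_h$, formed from the coordinates $x^{(k)}_{ij}$ and their normalized derivatives.

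Next I would trace the map \eqref{equ:invariantringmap}. By functoriality of arc spaces applied to the quotient $V \to V/\!\!/GL_h(K)$, the induced map on coordinate rings sends $x^{(k)}_{ij}$ to $\bpartial^k X^{(0)}_{ij}=X^{(k)}_{ij}$, so that a standard monomial of $\cJ_h$ on the left is sent to the analogous expression on the right formed from the $X^{(k)}_{ij}$. By Theorem~\ref{thm:main}(3), these latter expressions form a $K$-basis of $(\B^K)^{J_\infty(GL_h(K))}$. Hence \eqref{equ:invariantringmap} sends a $K$-basis bijectively onto a $K$-basis and is therefore a $K$-algebra isomorphism. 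Taking $\Spec$ then yields the geometric statement $J_\infty(V)/\!\!/J_\infty(GL_h(K)) \cong J_\infty(V/\!\!/GL_h(K))$. There is no real obstacle beyond Theorem~\ref{thm:main} itself; once the two sides are known to share a standard monomial basis which is manifestly preserved by the natural map, the isomorphism is automatic.
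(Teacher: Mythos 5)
Your proof is correct and is essentially the paper's own argument in repackaged form. The paper cites Theorem~\ref{thm:injectiveGL} (injectivity of $Q_h^K$) and Theorem~\ref{thm:JGLinvariant} (${\B^K}^{J_\infty(G)}=\C^K$, the image of $Q_h^K$) directly, whereas you invoke Theorem~\ref{thm:main}(3), which is proved from exactly those two theorems; both routes reduce to observing that the coordinate-ring map is $Q_h^K$ and that it carries the standard monomial basis of $\R_h^K$ bijectively onto a basis of the invariant ring.
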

Corollary \ref{cor:quotient} is a generalization of Theorem 4.6 of \cite{LSSI}, which deals with the following special cases for $K = \mathbb{C}$.
\begin{enumerate}
\item $p\leq h$ or $q \leq h$, so that $V/\!\!/GL_h(\mathbb C)$ is an affine space,
\item $p = h+1 = q$, so that $V/\!\!/GL_h(\mathbb C)$ is a hypersurface.
\end{enumerate}
In the second paper in this series \cite{LS1}, we will prove a similar theorem for the symplectic group $Sp_h(K)$ for $h$ an even integer: for $W = K^{\oplus h}$ and $V = W^{\oplus p}$, \eqref{equ:invariantringmap} is an isomorphism for all $h$ and $p$. In the third paper \cite{LS2}, we will study the case $G = SL_h(K)$, $W = K^{\oplus h}$ and $V=W^{\oplus p}\bigoplus {W^*}^{\oplus q}$. This case is more subtle since \eqref{equ:invariantringmap} is always surjective, but fails to be injective if $\text{max}(p,q) -2 > h$. We will completely determine its kernel, which coincides with the nilradical of $K[J_{\infty}(V/\!\!/G)]$ when $\text{char} \ K = 0$. Unfortunately we are unable to prove similar results for the orthogonal and special orthogonal groups using these methods.

Our results on the invariant theory of arc spaces have significant applications to vertex algebras which we will develop in separate papers \cite{LS3,LS4}. These include the structure of cosets of affine vertex algebras inside free field algebras, classical freeness of the affine vertex algebras $L_k(\mathfrak{sp}_{2n})$ for all positive integers $n$ and $k$, new level-rank dualities involving affine vertex superalgebras, and the complete description of the vertex algebra of global sections of the chiral de Rham complex of an arbitrary compact Ricci-flat K\"ahler manifold. 

\subsection{Acknowledgment:} B. Song would like to thank Mao Sheng for discussions and suggestions on algebraic geometry on this subject. A. Linshaw is supported by Simons Foundation Grant \#635650 and NSF Grant DMS-2001484. B. Song is supported  by NSFC No. 11771416.

\section{Standard monomials} \label{sect:standardmonomial}

Fix integers $p,q \geq 1$, and recall the ring 
$$\R=\R^{p,q}=\mathbb Z[x^{(k)}_{ij}|\ 1\leq i\leq p, \ 1\leq j\leq q,\ k\geq 0],$$ with derivation $\partial$ defined on generators by $\partial x_{ij}^{(k)}=(k+1)x_{ij}^{(k+1)}$. As above, this is an integral version of the coordinate ring of the arc space of the space of $p\times q$ matrices, that is, $K[J_{\infty}(M_{p,q})] = \R \otimes_{\mathbb{Z}} K$, for any field $K$.

\subsection{Normalized derivatives} 
For $\l \geq 0$, we define the $l^{\text{th}}$ normalized derivative $\bpartial^l=\frac 1 {l!}\partial^l$ on $\cR$, which satisfies
$$\bpartial^l x_{ij}^{(k)}=C_{k+l}^l  x_{ij}^{(k+l)}\in \R.$$
Here for $k,n \in \mathbb{Z}_{\geq 0}$, 
$$C_n^k=\left\{ \begin{array}{cc}
\frac{n!}{k!(n-k)!}, &0\leq k\leq n,\\
0, &\text{otherwise.}
\end{array}\right.
$$

The following propositions are easy to verify.
\begin{prop} For any $a,b\in\R$,
 $$\bpartial^l(ab)=\sum_{i=0}^l\bpartial^ia\, \bpartial^{l-i}b,$$
and $\bpartial^l a\in \R$.
\end{prop}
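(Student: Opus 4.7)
The plan is to establish both assertions by first working formally in $\R\otimes\mathbb Q$, where division by $l!$ causes no problem, and then deduce the integrality statement $\bpartial^l a\in\R$ afterwards by an induction on the degree that uses the Leibniz formula just proved.

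First I would prove the Leibniz-type formula by an entirely formal calculation. Since $\partial$ is an ordinary derivation on $\R$, the classical Leibniz rule
\[
\partial^l(ab)=\sum_{i=0}^l\binom{l}{i}\partial^i a\cdot\partial^{l-i}b
\]
holds in $\R$ (and hence in $\R\otimes\mathbb Q$) by induction on $l$. Dividing by $l!$ and using $\binom{l}{i}/l!=1/(i!(l-i)!)$ immediately gives
\[
\bpartial^l(ab)=\frac{1}{l!}\sum_{i=0}^l\binom{l}{i}\partial^i a\cdot \partial^{l-i}b=\sum_{i=0}^l\bpartial^i a\cdot\bpartial^{l-i}b,
\]
an identity a priori living in $\R\otimes\mathbb Q$.

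Next I would show $\bpartial^l a\in\R$. It suffices to check this on a monomial $a=x^{(k_1)}_{i_1j_1}\cdots x^{(k_r)}_{i_rj_r}$, proceeding by induction on the number $r$ of factors. For $r=1$ the claim is the given formula $\bpartial^l x^{(k)}_{ij}=C^l_{k+l}x^{(k+l)}_{ij}$. For the inductive step, write $a=bc$ with $b,c$ monomials of strictly smaller degree; applying the Leibniz formula in $\R\otimes\mathbb Q$ expresses $\bpartial^l(bc)$ as a sum of products $\bpartial^i b\cdot\bpartial^{l-i}c$, each of which lies in $\R$ by the inductive hypothesis. Hence $\bpartial^l a\in\R$. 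Once integrality of $\bpartial^i$ on all of $\R$ is established, the Leibniz identity, already proved in $\R\otimes\mathbb Q$, is automatically an identity in $\R$.

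There is essentially no obstacle here beyond bookkeeping: the only subtlety is the logical order, namely that one must prove the Leibniz formula over $\mathbb Q$ first in order to run the induction that establishes integrality, and then reinterpret the formula integrally. The binomial identity $\binom{l}{i}/l!=1/(i!(l-i)!)$ is what makes the normalized derivations precisely the divided-power operators, and this is why the integer coefficients $C^l_{k+l}$ that show up on generators propagate to arbitrary elements of $\R$.
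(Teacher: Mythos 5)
Your proof is correct. The paper states this proposition without proof, declaring it ``easy to verify,'' so there is no in-paper argument to compare against; your write-up simply supplies the details the authors left to the reader. The logical order you identify (prove the Leibniz identity formally in $\R\otimes\mathbb Q$, then use it inductively to get integrality on monomials, then observe that since $\R$ is torsion-free the identity descends to $\R$) is sound. One minor remark: in the base case you might also note the degenerate cases, namely that for a constant $a\in\mathbb Z$ one has $\bpartial^l a=0$ for $l\geq 1$, and that an arbitrary monomial carries an integer coefficient which passes through $\bpartial^l$ by linearity; but these are trivial additions and do not affect the validity of the argument.
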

\begin{prop}\label{prop:detexpression}
	For a minor $B$ of the form \eqref{eqn:minor},
	\begin{equation}\bpartial^n B=\sum_{\substack{n_1+\cdots+n_h=n\\n_i\in \mathbb Z_{\geq 0}}}\sum_{\sigma}\sign(\sigma)x^{(n_1)}_{u_1v_{\sigma(1)}} x^{(n_2)}_{u_2v_{\sigma(2)}}\cdots x^{(n_h)}_{u_hv_{\sigma(h)}}.
	\end{equation}
\end{prop}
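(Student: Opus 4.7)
The strategy is simply to expand the determinant as a signed sum of products and then apply the iterated Leibniz rule provided by the previous proposition. There are no subtle issues here; the only thing to be careful about is the normalization of the derivation.

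First I would expand the minor by its definition:
$$B = \sum_{\sigma} \sign(\sigma)\, x^{(0)}_{u_1 v_{\sigma(1)}} x^{(0)}_{u_2 v_{\sigma(2)}} \cdots x^{(0)}_{u_h v_{\sigma(h)}},$$
where $\sigma$ runs over permutations of $\{1,\dots,h\}$. By $\mathbb{Z}$-linearity of $\bpartial^n$, it suffices to compute $\bpartial^n$ applied to each monomial $x^{(0)}_{u_1 v_{\sigma(1)}} \cdots x^{(0)}_{u_h v_{\sigma(h)}}$.

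Next I would upgrade the binary Leibniz rule of the preceding proposition to its $h$-fold version by a routine induction on $h$: for any $a_1, \dots, a_h \in \R$,
$$\bpartial^n(a_1 a_2 \cdots a_h) = \sum_{\substack{n_1+\cdots+n_h=n\\ n_i \in \Zplus}} \bpartial^{n_1} a_1 \cdot \bpartial^{n_2} a_2 \cdots \bpartial^{n_h} a_h.$$
This is immediate by writing $a_1 \cdots a_h = a_1 \cdot (a_2 \cdots a_h)$, applying the binary rule, and using the inductive hypothesis.

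Applying this to each $a_i = x^{(0)}_{u_i v_{\sigma(i)}}$, I use the explicit formula $\bpartial^{n_i} x^{(0)}_{u_i v_{\sigma(i)}} = C^{n_i}_{n_i} x^{(n_i)}_{u_i v_{\sigma(i)}} = x^{(n_i)}_{u_i v_{\sigma(i)}}$, since $C^{n_i}_{n_i} = 1$. This yields
$$\bpartial^n\bigl(x^{(0)}_{u_1 v_{\sigma(1)}} \cdots x^{(0)}_{u_h v_{\sigma(h)}}\bigr) = \sum_{n_1+\cdots+n_h = n} x^{(n_1)}_{u_1 v_{\sigma(1)}} \cdots x^{(n_h)}_{u_h v_{\sigma(h)}}.$$
Summing over $\sigma$ weighted by $\sign(\sigma)$ and interchanging the order of summation produces exactly the claimed formula.

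There is no real obstacle; the main conceptual point is that the normalization $\bpartial^l = \frac{1}{l!}\partial^l$ is precisely what absorbs the factorial coefficients so that $\bpartial^{n_i}$ sends $x^{(0)}_{ij}$ to $x^{(n_i)}_{ij}$ with coefficient $1$, and the $h$-fold Leibniz rule has no multinomial coefficients. Without this normalization the formula would be cluttered with combinatorial factors; with it, the statement is as clean as possible.
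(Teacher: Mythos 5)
Your proof is correct and is the obvious argument: expand the determinant by the signed-permutation formula, apply the $h$-fold Leibniz rule obtained by induction from the binary one in the preceding proposition, use $\bpartial^{n_i}x^{(0)}_{ij}=C^{n_i}_{n_i}x^{(n_i)}_{ij}=x^{(n_i)}_{ij}$, and interchange sums. The paper omits the proof entirely (it simply says the proposition is ``easy to verify''), and your argument is exactly the verification the authors have in mind.
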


\subsection{Generators}  Recall that the minor $B$ in \eqref{eqn:minor} can be represented by the pair of ordered $h$-tuples $(u_h,\dots,u_2,u_1|v_1,v_2,\dots,v_h)$, where $1\leq u_i< u_{i+1}\leq p$ and $1\leq v_i< v_{i+1}\leq q$. Similarly, let
\begin{equation}\label{eqn:sequenceJ}
J=\bpartial^n(u_h,\dots,u_2,u_1|v_1,v_2,\dots,v_h)
\end{equation} represent $\bpartial^nB\in\R$, the $n^{\text{th}}$ normalized derivative of the minor $B$. For convenience, we shall call such expressions {\it $\bpartial$-lists} throughout this paper. We call $wt(J)=n$ the weight of $J$ and call $sz(J)=h$ the size of $J$. Let $\cJ$ be the set of these $\bpartial$-lists,
and
$$\cJ_h=\{J\in \cJ| sz(J)\leq h\}$$ 
be the set of elements of $\cJ$ with size less than or equal to $h$.
Let $\cE$ be the set of pairs of ordered $h$-tuples of ordered pairs of the form
\begin{equation}\label{eqn:element}
E=((u_h,k_h),\dots,(u_2,k_2),(u_1,k_1)|(v_1,l_1),(v_2,l_2),\dots,(v_h,l_h))
\end{equation}
with $1\leq u_i\leq p$, $1\leq v_i\leq q$, $u_i\neq u_j$ if $i\neq j$, $v_i\neq v_j$ if $i\neq j$, and $k_i,l_j\in \Zplus$. For each $E$, there are unique permutations $\sigma, \sigma'$ of $\{1,2,\dots, h\}$ such that $u_{\sigma(i)}< u_{\sigma(i+1)}$ and $v_{\sigma'(i)}<v_{\sigma'(i+1)}$.
Let
$$
||E||= \bpartial^n (u_{\sigma(h)},\dots,u_{\sigma(2)},u_{\sigma(1)}|v_{\sigma'(1)},v_{\sigma'(2)},\dots,v_{\sigma'(h)})\in\cJ.
$$
Here $n=\sum k_i+\sum l_i$ and $\sigma, \sigma'$ are the above permutations. Let 
$$wt(E)=wt(||E||),\quad\quad sz(E)=sz(||E||).$$
Let $$\cE_h=\{E\in\cE|\,\,sz(E)\leq h\}.$$
 For $J\in \cJ$, let
\begin{equation} \label{eq:defEJ} \cE(J)=\{E\in\cE|\ ||E||=J\}.\end{equation}
$\cJ$ is a set of generators of $\R$ and we can use the elements in $\cE(J)$ to represent $J$.

\subsection{Ordering} For any set $\cS$, let $\cM(\cS)$ be the set of ordered products of elements of $\cS$.
If $\cS$ is an ordered set, we order $\cM(\cS)$ lexicographically, that is 
$$
S_1S_2\cdots S_m\prec S'_1S'_2\cdots S'_n \quad\text{ if  } S_i=S'_i, i< i_0, \text{ with }S_{i_0}\prec S'_{i_0} \text { or } i_0=m+1,n>m.
$$ 
We order $\cM(\mathbb Z)$, the set of ordered product of integers, lexicographically.

There is an ordering on the set $\cJ$:
$$
\bpartial^k(u_h,\dots,u_2,u_1|v_1,v_2,\dots,v_h)\prec \bpartial^{k'}(u'_{h'},\dots,u'_2,u'_1|v'_1,v'_2,\dots,v'_{h'})
$$
if
\begin{itemize}
	\item
	$h'< h$ ;
	\item or $h'=h$ and $k< k'$;
	\item or  $h'=h$, $k= k'$  and $u_h\cdots u_1v_h\cdots v_1\prec u'_h\cdots   u'_1v'_h\cdots v'_1$. Here we order the words of natural numbers lexicographically.
\end{itemize}	
	
We order the pairs $(u,h)\in \Zplus\times \Zplus$ by
$$(u,h)\leq (u',h'), \text{ if } h<h' \text{ or } h=h' \text{ and }u\leq u'.$$
There is a partial ordering on the set $\cE$:
$$
((u_h,k_h),\dots,(u_1,k_1)|(v_1,l_1),\dots,(v_h,l_h))\leq ((u'_{h'},k'_{h'}),\dots,(u'_1,k'_1)|(v'_1,l'_1),\dots,(v'_{h'},l'_{h'}))
$$
if $h'\leq h$ and $(u_i,k_i)\leq (u'_i,k'_i)$, $(v_i,l_i)\leq (v'_i,l'_i)$, for $1\leq i\leq h'$.\\ 
Finally, there is an ordering on $\cE$:
$$ 
((u_h,k_h),\dots,(u_1,k_1)|(v_1,l_1),\dots,(v_h,l_h))\prec ((u'_{h'},k'_{h'}),\dots,(u'_1,k'_1)|(v'_1,l'_1),\dots,(v'_{h'},l'_{h'}))
$$
if
\begin{itemize}
	\item $h>h'$;
	\item or $h=h'$ and $\sum (k_i+l_i)<\sum (k'_i+l'_i)$;
	\item or  $h=h'$, $\sum (k_i+l_i)=\sum (k'_i+l'_i)$ and
	$$
	(u_h,k_h)\cdots (u_1,k_1)(v_h,l_h)\cdots (v_1,l_1)\prec (u'_{h'},k'_{h'})\cdots(u'_1,k'_1)(v'_{h'},l'_{h'})\cdots(v'_{1},l'_{1}).
	$$
	Here we order the words of $\Zplus\times \Zplus$ lexicographically.
\end{itemize}
\begin{lemma}\label{lemma:compare2}
	If $E\leq E'$, then $||E||\prec ||E'||$.
\end{lemma}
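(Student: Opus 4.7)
The plan is to match the three-tier total order on $\cJ$ (first by size, then by weight, then lexicographically on words) against the three-tier definition of $E \le E'$, and to show that whichever tier first decides the comparison forces $||E|| \prec ||E'||$. The proof thus reduces to three elementary observations, one per tier.

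For the size tier: the hypothesis $E \le E'$ includes $h' \le h$, while $sz(||E||) = h$ and $sz(||E'||) = h'$, so if $h' < h$ the size clause of $\prec$ on $\cJ$ immediately gives $||E|| \prec ||E'||$. For the weight tier: in the remaining case $h = h'$, I would exploit the fact that the pair-order $(u_i, k_i) \le (u'_i, k'_i)$ has the second coordinate as primary, which extracts $k_i \le k'_i$; likewise $l_i \le l'_i$. Summing yields $wt(||E||) \le wt(||E'||)$, and a strict inequality here concludes by the weight clause of $\prec$. Otherwise the weights coincide, forcing $k_i = k'_i$ and $l_i = l'_i$ for every $i$, and thence $u_i \le u'_i$ and $v_i \le v'_i$ componentwise.

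For the lexicographic tier — where the only real work lies — I would invoke the standard sorting lemma: if $a_i \le b_i$ for every $i$, then the increasing rearrangements satisfy $\tilde a_i \le \tilde b_i$. This follows from the characterization $\tilde a_i \le v \iff |\{j : a_j \le v\}| \ge i$ applied at $v = \tilde b_i$: any $i$-subset of the $b_j$ lying $\le \tilde b_i$ pulls back via the identity index set to an $i$-subset of the $a_j$ lying $\le \tilde b_i$. With $\tilde u_i \le \tilde u'_i$ and $\tilde v_i \le \tilde v'_i$ in hand, the words $\tilde u_h \cdots \tilde u_1 \tilde v_h \cdots \tilde v_1$ from $||E||$ and $||E'||$ are componentwise $\le$ at equal length, so either a strict inequality at some position yields $||E|| \prec ||E'||$ by lex comparison, or all positions agree; in the latter case the multisets of $u_i$ and $v_i$ match, and componentwise $u_i \le u'_i$ with equal sums forces $u_i = u'_i$ (and similarly for $v$), so $E = E'$ and $||E|| = ||E'||$ (the degenerate boundary case). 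The sorting lemma is the sole nonroutine ingredient; everything else is systematic bookkeeping across the three ordering tiers, and I foresee no serious obstacle.
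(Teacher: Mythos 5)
Your proposal is correct and follows the same three-tier case split (size, then weight, then the sorted word) that the paper uses; the only difference is that you spell out the sorting lemma justifying $\tilde u_i \le \tilde u'_i$, which the paper's proof passes over in silence as obvious. Note also that $\prec$ in this paper is non-strict (see the definition of $\R(\alpha)$ in Section 6, which explicitly adds $\beta_i \ne \alpha$), so your degenerate boundary case $||E|| = ||E'||$ is covered directly and needs no special handling.
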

\begin{proof}
	If $sz(E')<sz(E)$ or $sz(E)=sz(E')$ and $wt(E)<wt(E')$, then $||E||\prec ||E'||$. If $sz(E)=sz(E')$ and $wt(E)=wt(E')$, we must have $k_i=k_i'$ and $l_j=l_j'$. So $u_i\leq u'_i$ and $v_j\leq v'_j$, we have $||E||\prec ||E'||$.
\end{proof}

\subsection{Relations}
Let $$\bpartial^k(u_h,\dots,u_1|v_1,\dots,v_h)=0$$ if there is $1\leq i<j\leq h$ such that  $u_i=u_j$  or $v_i=v_j$.  
For a $\bpartial$-list $J\in \cJ$ of the form \eqref{eqn:sequenceJ}, let 
$$\bpartial^k(u_{\sigma(h)},\dots,u_{\sigma(2)},u_{\sigma(1)}|v_{\sigma'(1)},v_{\sigma'(2)},\dots,v_{\sigma'(h)})=\sign(\sigma)\sign(\sigma')J.$$
Here $\sigma, \sigma'$ are permutations of $\{1,2,\dots,h\}$, and $\sign(\sigma)$, $\sign(\sigma')$ are the signs of these permutations.
 We have the following relations, which we will prove later in Section \ref{sec:proof1.2}.
\begin{lemma}\label{lem:fullrelation0}
   	For $i_1,i_2,j_1,j_2,h,h',k_0,m\in \mathbb Z_{\geq 0}$ with $h\geq h'$, $i_1,j_1\leq h$, $i_2,j_2\leq h'$ and $k_0\leq m$, let $l_0=i_1+i_2+j_1+j_2-2h-1$.  Given any integers  $a_{k}$, $k_0\leq k\leq k_0+l_0$, there are integers $a_k$, $0\leq k<k_0$ or $k_0+l_0<k\leq m$, such that 
	\begin{equation}\label{eqn: relation4'}
	\sum_{k=0}^ma_k\sum_{\sigma,\sigma'} \frac{1}{i_1!i_2!j_1!j_2!}\sign(\sigma) \sign(\sigma')
	\end{equation}
\begin{equation*}
	  \left(
	\begin{array}{ccc}
	\bpartial^{m-k} (u_h,\dots,u_{i_1+1},\sigma(u_{i_1}),\dots,\sigma(u_1)&|&\sigma'(v_1),\dots, \sigma'(v_{j_1}),v_{j_1+1},\dots,v_h)\\
	\bpartial^{k} (u'_{h'},\dots,u'_{i_2+1},\sigma(u'_{i_2}),\dots,\sigma(u'_1)&|&\sigma'(v'_1),\dots, \sigma'(v'_{j_2}),v'_{j_2+1},\dots,v'_{h'})
	\end{array}
	\right)\\
	\end{equation*}
is in $\R[h+1]$.
	Here the second summation is over all pairs of permutations $\sigma$ of $u_{i_1},\dots,u_1,$ $u'_{i_2},\dots,u'_1$ and permutations $\sigma'$  of $v_{i_1},\dots,v_1,v'_{i_2},\dots,v'_1$, and $\sign(\sigma)$ and $\sign(\sigma')$ are the signs of the permutations.
\end{lemma}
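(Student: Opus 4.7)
The plan is to reduce the lemma to the classical ($m=0$) Plücker/straightening relation for two minors and then propagate it through the arc space via the normalized derivation $\bpartial$. Throughout, let $T_k^{(m)}$ denote the $k$-th summand of the inner expression in \eqref{eqn: relation4'} and set
\[
L_m := \{(a_0, \dots, a_m) \in \mathbb{Z}^{m+1} : \sum_{k=0}^{m} a_k T_k^{(m)} \in \R[h+1]\};
\]
the lemma asserts that the coordinate projection $L_m \to \mathbb{Z}^{l_0+1}$ onto the middle block $k_0 \leq k \leq k_0 + l_0$ is surjective over $\mathbb Z$.

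\textit{Base case and the all-ones direction.} When $m=0$ we must have $k_0=0$ and $l_0 \leq 0$. If $l_0 < 0$ the middle range is empty and the zero completion works; if $l_0 = 0$ the claim reduces to $T_0^{(0)} \in \R[h+1]$. Under the hypothesis $i_1+i_2+j_1+j_2 \geq 2h+1$ this is the classical Laplace/straightening relation expressing an antisymmetrized product of two minors as an element of the $(h+1)$-minor ideal, valid over $\mathbb{Z}$; see e.g.\ \cite[Ch.~13]{LR}. For general $m$, since $\R[h+1]$ is closed under $\bpartial$, applying $\bpartial^m$ and expanding via the Leibniz rule (Proposition 2.4) gives
\[
\bpartial^m T_0^{(0)} \;=\; \sum_{k=0}^{m} T_k^{(m)} \;\in\; \R[h+1],
\]
so the all-ones vector $(1, \dots, 1)$ lies in $L_m$. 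This single direction already proves the lemma when $l_0 = 0$: for any target $a_{k_0} \in \mathbb Z$ take $a_k = a_{k_0}$ for all $k$.

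\textit{Induction on $m$ for $l_0 \geq 1$.} The derivation satisfies the shift identity
\[
\bpartial\, T_k^{(m-1)} \;=\; T_k^{(m)} + T_{k+1}^{(m)}, \qquad 0 \leq k \leq m-1,
\]
which is immediate from Proposition 2.4. Hence $\bpartial$ induces a $\mathbb{Z}$-linear map $L_{m-1} \to L_m$, $(a'_k) \mapsto (a'_{k-1} + a'_k)_k$ (with boundary convention $a'_{-1}=a'_m=0$). Given a target middle vector $(a_{k_0}, \dots, a_{k_0+l_0})$ at level $m$, I select $k_0' \in \{k_0-1, k_0\}$ at level $m-1$ according as $k_0+l_0 = m$ or $k_0+l_0 < m$, then use the inductive hypothesis to produce a level-$(m-1)$ relation whose $\bpartial$-shift realizes the target. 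The equations $a'_{k-1} + a'_k = a_k$ for $k$ in the middle range form a bidiagonal system which, once $a'_{k_0-1}$ (or $a'_{k_0+l_0}$) is supplied by the IH, is solved iteratively by integer subtractions, preserving integrality.

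\textit{Main obstacle.} The hard step is the base of the induction for $l_0 \geq 1$, i.e.\ $m = l_0$. At this smallest level the derivation-of-classical construction above yields only the single direction $(1, \dots, 1) \in L_{l_0}$, whereas $l_0+1$ linearly independent directions are needed and must project unimodularly onto the middle block. To supply the missing $l_0$ directions I expect to combine the given classical Plücker relation with \emph{auxiliary} Plücker relations obtained by lowering one of $i_1, i_2, j_1, j_2$ by one (which reduces $l_0$ by one and re-introduces a ``free'' matrix entry), multiplied by the corresponding auxiliary entry $x^{(0)}_{ij}$ before the final application of $\bpartial^m$. The factorial prefactor $\tfrac{1}{i_1!i_2!j_1!j_2!}$ in the lemma is precisely what absorbs the binomial coefficients arising from such polarizations and keeps everything integral. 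The main technical burden is to arrange the resulting collection of polarized identities into an upper-triangular, unimodular system on the middle coordinates.
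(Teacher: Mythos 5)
The plan is broken at two specific points, and the second one is exactly the crux of the lemma.

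\textbf{The shift identity is wrong.} You claim $\bpartial\,T_k^{(m-1)} = T_k^{(m)} + T_{k+1}^{(m)}$ follows from the normalized Leibniz rule. But Proposition 2.3 of the paper gives unit coefficients only when you apply $\bpartial^\ell$ to a product \emph{in one step}. Applying a single $\bpartial = \partial$ to a term $\bpartial^{m-1-k}A\cdot\bpartial^k B$ and using $\partial\bpartial^j = (j+1)\bpartial^{j+1}$ yields
\[
\partial\bigl(\bpartial^{m-1-k}A\cdot\bpartial^k B\bigr)
= (m-k)\,\bpartial^{m-k}A\cdot\bpartial^k B
+ (k+1)\,\bpartial^{m-1-k}A\cdot\bpartial^{k+1}B,
\]
so the correct shift map on coefficient vectors is $(a'_k)\mapsto\bigl(k\,a'_{k-1}+(m-k)\,a'_k\bigr)_k$. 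The resulting bidiagonal system has entries $k$ and $m-k$ on the diagonals, not $1$'s, and there is no reason it can be solved over $\mathbb Z$ ``by integer subtractions''. So the inductive step on $m$ does not go through as stated.

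\textbf{The missing seed relations.} You correctly identify the main obstacle — for $l_0\geq 1$ the single direction $(1,\dots,1)$ coming from $\bpartial^m$ of the classical Pl\"ucker relation is nowhere near enough — but the speculated fix (polarize by lowering one of $i_1,i_2,j_1,j_2$, re-introduce a free matrix entry, multiply) is not how the paper fills the gap, and it is far from clear it can be made to work. The paper's key input is the family of $l_0+1$ \emph{arc-space} Pl\"ucker-type relations
\[
\cF^l_0(L_0,L_1,K_0,K_1,n)=\sum_{N,J}\epsilon(N,J)\,\cA(N,J)\,\bpartial^l\cA(\bar N,\bar J)\;\in\;\R[n+1],
\qquad 0\leq l\leq l_0,
\]
established in Lemma 6.4 of the paper via the subset-derivations $\partial_N$, $\tpartial_N$ and Lemmas 6.1–6.3. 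Only $l=0$ is classical; the cases $1\leq l\leq l_0$ are genuinely new statements about derivatives of minors, and they are the reason $l_0+1$ independent directions exist at all. The paper then applies $\bpartial^{m-l}$ to each seed to get $\sum_k C^l_{m-k}\cF^m_k\in\R[n+1]$, and wins by a clean unimodularity argument: the $(l_0+1)\times(l_0+1)$ matrix $\bigl(C^j_{m-k_0-i}\bigr)$ has determinant $\pm1$. Your proposal has neither the extra seeds nor the unimodular matrix step, so even granting the (false) shift identity, the base case $m=l_0$ for $l_0\geq 1$ — which you flag yourself — remains unproved.

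In short: the ``apply $\bpartial^m$ to the classical Pl\"ucker relation'' idea is one of the $l_0+1$ ingredients in the paper, and your correct observation that more directions are needed is also right; but the missing directions come from a nontrivial family of derivative-shifted Pl\"ucker relations that requires a separate argument, not from an inductive bootstrap on $m$.
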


For simplicity, we write Equation (\ref{eqn: relation4'}) in the following way,
	\begin{equation}\label{eqn: relation4}
\sum \epsilon a_{k} \left(
\begin{array}{ccc}
\bpartial^{m-k} (u_h,\dots,u_{i_1+1},\underline{u_{i_1},\dots,u_1}&|&\underline{v_1,\dots, v_{j_1}},v_{j_1+1},\dots,v_h)\\
\bpartial^{k} (u'_{h'},\dots,u'_{i_2+1},\underline{u'_{i_2},\dots,u'_1}&|&\underline{v'_1,\dots, v'_{j_2}},v'_{j_2+1},\dots,v'_{h'})
\end{array}
\right)
\in \R[h+1].
\end{equation}

\begin{remark}
Since the second summation in Equation (\ref{eqn: relation4'}) is over all permutations, each monomial in the equation will appear $i_1!i_2!j_1!j_2!$ times, so the coefficient of each monomial will be $\pm a_k$.\end{remark}

\subsection{Standard monomials} Now we give a definition of the standard monomials of $\cJ$.
\begin{defn}\label{def:standard}
An ordered product $E_{1}E_{2}\cdots E_{m}$ of elements of $\cE$ is said to be standard if 
\begin{enumerate} 
\item $E_a\leq E_{a+1}$, $1\leq a<m$, 
\item  $E_{1}$ is the largest in $\cE(||E_{1}||)$ under the order $\prec$, where  $\cE(||E_{1}||)$ is defined by \eqref{eq:defEJ},
\item $E_{a+1}$ is the largest in $\cE(||E_{a+1}||)$ such that $E_a\leq E_{a+1}$.
\end{enumerate}
 An ordered product $J_1J_2\cdots J_m$ of elements of $\cJ$ is said to be standard if there is a standard ordered product $E_{1}E_{2}\cdots E_{m}$ such that $E_i\in \cE(J_i)$.
\end{defn}	
Let $\cS\cM(\cJ)\subset \cM(\cJ)$ be the set of standard monomials of $\cJ$; \\
let $\cS\cM(\cE)\subset \cM(\cE)$ be the set of standard monomials of $\cE$; \\
let $\cS\cM(\cJ_h)=\cM(\cJ_h)\cap\cS\cM(\cJ)$ be the set of standard monomials of $\cJ_h$; \\
let $\cS\cM(\cE_h)=\cM(\cE_h)\cap\cS\cM(\cE)$ be the set of standard monomials of $\cE_h$.

By Definition \ref{def:standard}, if $J_1J_2\cdots J_m$ is a standard monomial, the standard monomial $E_1\cdots E_m\in \cS\cM(\cE)$ corresponding to $J_1\cdots J_m$ is unique and $E_1$ has the form 
$$((u_h,wt(E_1)),(u_{h-1},0),\dots,(u_1,0)|(v_1,0),\dots,(v_h,0))\in \cE$$
 with $u_i<u_{i+1}$ and $v_i<v_{i+1}$. So the map
 $$\pi_h :\cS\cM(\cE_h) \to \cS\cM(\cJ_h  ),\quad E_1E_2\cdots E_m\mapsto ||E_1|| ||E_2|| \cdots ||E_m||$$
is a bijection.

We order $\cM(\cJ)$, the set of ordered products of elements of $\cJ$, lexicographically. The following lemma will be proved later in Section \ref{section:proof2.5}.
\begin{lemma}\label{lem:base0}
	If $J_1\cdots J_b\in\cM(\cJ)$ is not standard, $J$ can be written as a linear combination of elements of $\cM(\cJ)$ preceding $J_1\cdots J_{b-1}$ with integer coefficients.
\end{lemma}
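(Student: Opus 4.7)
The plan is to prove this by induction on the lex order on $\cM(\cJ)$, with a single one-step reduction at the rightmost obstructing pair of factors, realized via the Pl\"ucker-type relation of Lemma \ref{lem:fullrelation0}. First I would reduce to the case where $J_1\cdots J_{b-1}$ is already standard but $J_1\cdots J_b$ is not; this is done by identifying the smallest index $a$ at which standardness fails and applying the argument to $J_a\cdots J_b$. In this reduced situation, let $E_1,\dots,E_{b-1}\in\cE$ denote the unique standard representatives of $J_1,\dots,J_{b-1}$ furnished by Definition \ref{def:standard}. The failure at position $b$ amounts to: there is no maximal $E_b\in\cE(J_b)$ satisfying $E_{b-1}\le E_b$; writing $J_{b-1}=\bpartial^{k_1}(u_h,\dots,u_1|v_1,\dots,v_h)$ and $J_b=\bpartial^{k_2}(u'_{h'},\dots,u'_1|v'_1,\dots,v'_{h'})$, the obstruction is purely combinatorial, coming either from $h'>h$ or from the failure of some pair-comparison $(u_i,k_i)\le(u'_i,k'_i)$ or $(v_i,l_i)\le(v'_i,l'_i)$ at an index $i\le h'$.

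Next I would apply Lemma \ref{lem:fullrelation0} to $J_{b-1}$ and $J_b$, picking the shuffle parameters $i_1,j_1,i_2,j_2$ so that the exchange involves exactly the offending indices and choosing the coefficients $a_k$ so that the coefficient of the original pair $J_{b-1}J_b$ is $\pm 1$. Expanding (\ref{eqn: relation4'}) then expresses $J_{b-1}J_b$, modulo $\R[h+1]$, as an integral combination $\sum_\alpha c_\alpha\tilde J_\alpha\tilde J'_\alpha$ of bilinear products arising from nontrivial shuffles. By construction each shuffled representative $\tilde E_\alpha\in\cE$ is strictly $\le$-below $E_{b-1}$ on the first factor, so by Lemma \ref{lemma:compare2} the underlying sequence satisfies $\tilde J_\alpha=||\tilde E_\alpha||\prec J_{b-1}$. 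Prepending $J_1\cdots J_{b-2}$, each term $J_1\cdots J_{b-2}\tilde J_\alpha\tilde J'_\alpha$ strictly precedes $J_1\cdots J_{b-1}$ in the lex order on $\cM(\cJ)$, since the first position of disagreement is $b-1$ and $\tilde J_\alpha\prec J_{b-1}$ there.

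The leftover contributions lie in $\R[h+1]$ and consist of sums of monomials containing at least one minor of size $\ge h+1$; because larger size is strictly smaller in the $\prec$-order on $\cJ$, these monomials also precede $J_1\cdots J_{b-1}$ in lex order, and any remaining non-standardness in them is absorbed by the induction hypothesis. The principal obstacle will be executing the parameter choice in Lemma \ref{lem:fullrelation0} so that simultaneously (i) the coefficient of $J_{b-1}J_b$ is normalized to $\pm 1$, and (ii) every other surviving shuffled pair $\tilde J_\alpha\tilde J'_\alpha$ has first factor $\prec J_{b-1}$. Handling the normalized derivatives correctly is the delicate part here, since the weight must be redistributed along the shuffle while respecting the constraints in (\ref{eqn: relation4'}); the freedom to choose the $a_k$ outside the interval $[k_0,k_0+l_0]$ is exactly what is designed to carve out a relation whose monomial support lies in the required $\prec$-smaller pairs.
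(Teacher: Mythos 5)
Your plan captures the right high-level ingredients (reduce to the case where $J_1\cdots J_{b-1}$ is standard and straighten the last pair $J_{b-1}J_b$ via Lemma \ref{lem:fullrelation0} with the free coefficients chosen to isolate $J_{b-1}J_b$), but it contains a genuine gap at the central step. You assert that, after straightening, every surviving product $\tilde J_\alpha\tilde J'_\alpha$ has $\tilde J_\alpha\prec J_{b-1}$, so that prepending $J_1\cdots J_{b-2}$ lands you in $\R(J_1\cdots J_{b-1})$. This is false in general. When $sz(J_b)<sz(J_{b-1})$ (Case~2 of Lemma \ref{lemma:straight1}) the surviving lower-row minor $K_0$ has size $h_b<h_{b-1}$; since smaller size is \emph{larger} in the order $\prec$ on $\cJ$, one has $K_0\succ J_{b-1}$, not $K_0\prec J_{b-1}$. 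So $J_1\cdots J_{b-2}K_0\cdot(\text{upper factor})$ does not precede $J_1\cdots J_{b-1}$ lexicographically, and your argument cannot close. The same phenomenon occurs in Case~3 of Lemma \ref{lemma:straight1}.

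What the paper does instead is prove Lemma \ref{lemma:straight1}, a dichotomy: each surviving $K_i$ is \emph{either} smaller than $J_{b-1}$, \emph{or} it is not greater than $E_{b-2}$, the standard representative of $J_{b-2}$. The second alternative is indispensable and requires control on $L(E_{b-2},\cdot)$ and $R(E_{b-2},\cdot)$ using Corollaries \ref{cor:lrnumber1}, \ref{cor:lrnumber2} and Lemma \ref{lem:lrnumber}; it produces a term for which $J_1\cdots J_{b-2}K_i$ is a non-standard product of \emph{shorter} length $b-1$, and the paper's induction is on the number of factors $b$ (not on lex order) precisely so that this shorter, possibly lex-larger, product can be straightened by the inductive hypothesis. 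Your proposal never invokes $E_{b-2}$ at all, so it has no mechanism to absorb these terms. A smaller point: the freely specifiable coefficients in Lemma \ref{lem:fullrelation0} are the $a_k$ for $k\in[k_0,k_0+l_0]$, with the others then determined by the lemma; in your last paragraph you describe this the other way around, and more importantly, choosing them well normalizes the $J_{b-1}J_b$ coefficient to $\pm 1$ but does not and cannot force all other surviving first factors below $J_{b-1}$.
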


Recall that $\R[h]$ denotes the ideal generated by $J\in \cJ$ with $sz(J)=h$, and $\R_h=\R\slash\R[h+1]$, as in \eqref{def:ringRR_h}. If $h\geq min\{p,q\}$, then $\cJ_h=\cJ$ and $\R_h=\R$.
By the above lemma, we immediately have
\begin{lemma}\label{lem:base1}
	Any element of $\R_h$ can be written as a linear combination of standard monomials of $\cJ_h$ with integer coefficients.
\end{lemma}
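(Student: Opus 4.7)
My strategy is to deduce Lemma \ref{lem:base1} from Lemma \ref{lem:base0} by a well-founded induction on the ordering $\prec$ inside each bihomogeneous component of $\R_h$.

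I would first express any $f \in \R_h$ as a $\mathbb{Z}$-linear combination of elements of $\cM(\cJ_h)$ in $\R_h$: lift $f$ to $\R$ and expand as a polynomial in the generators $x^{(k)}_{ij} \in \cJ_1 \subseteq \cJ_h$. Then I would set up a bigrading on $\R$ by $\deg x^{(k)}_{ij} = 1$ and $\wt x^{(k)}_{ij} = k$. Every generator $\bpartial^n B$ of $\R[h+1]$ has bidegree $(h+1, n)$, so $\R[h+1]$ is bihomogeneous and $\R_h$ inherits the bigrading. For each bidegree $(d, n)$ the set $\cM(\cJ_h)_{(d, n)}$ of monomials in $\cM(\cJ_h)$ of that bidegree is finite, hence $\prec$ restricts to a well-order there; since the relations of Lemma \ref{lem:fullrelation0} are bihomogeneous, the rewriting supplied by Lemma \ref{lem:base0} preserves bidegree.

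Now the induction. Fix $(d, n)$ and take a non-standard $M = J_1 \cdots J_b \in \cM(\cJ_h)_{(d, n)}$. Lemma \ref{lem:base0} expresses $M$ as a $\mathbb{Z}$-combination of monomials in $\cM(\cJ)$ strictly preceding $J_1 \cdots J_{b-1}$ in $\prec$, and $J_1 \cdots J_{b-1} \prec M$. Any such preceding monomial that contains a factor $J \in \cJ$ of size $> h$ vanishes in $\R_h$, because any element of $\cJ$ of size $> h$ lies in $\R[h+1]$ (see below). Discarding those, $M$ equals, modulo $\R[h+1]$, a $\mathbb{Z}$-combination of monomials in $\cM(\cJ_h)_{(d, n)}$ all strictly preceding $M$. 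Induction on the well-order $\prec$ on the finite set $\cM(\cJ_h)_{(d, n)}$ then shows that every such $M$, and hence $f$, is a $\mathbb{Z}$-combination of standard monomials of $\cJ_h$ in $\R_h$.

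The main obstacle is the subsidiary claim that every $J \in \cJ$ with $\text{sz}(J) > h$ lies in $\R[h+1]$. The base case $\text{sz}(J) = h+1$ is the definition of $\R[h+1]$. For $J = \bpartial^n B$ with $B$ an $h'$-minor and $h' > h + 1$, Laplace expansion of $B$ along a row writes $B$ as a $\mathbb{Z}$-combination of products $x^{(0)}_{ij} B_j$ with $B_j$ an $(h'-1)$-minor; the Leibniz rule for $\bpartial^n$ then expresses $\bpartial^n B$ as a $\mathbb{Z}$-combination of terms $x^{(k)}_{ij}\,\bpartial^{m} B_j$, and the inductive hypothesis on size gives $\bpartial^m B_j \in \R[h+1]$, hence $\bpartial^n B \in \R[h+1]$.
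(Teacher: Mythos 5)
Your proof is correct and follows the same strategy as the paper's own proof: repeatedly apply Lemma \ref{lem:base0} and induct on the ordering $\prec$. You make explicit two points the paper leaves implicit (restricting to a bihomogeneous component to guarantee well-foundedness of $\prec$, and the fact that any $J\in\cJ$ with $\mathrm{sz}(J)>h$ lies in $\R[h+1]$), but the essential argument is the same.
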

\begin{proof} We only need to show that any element of $\R$ can be written as a linear combination of standard monomials of $\cJ$ with integer coefficients. Recall that $\cJ$ generates $\R$. If the lemma is not true,
there must be a smallest element $J\in \cM(\cJ)$, which cannot be written as a linear combination of elements of $\cS\cM(\cJ)$ with integer coefficients. So $J$ is not standard. By Lemma \ref{lem:base0},
$J=\sum_{\alpha}\pm  J_{\alpha}$ with $J_{\alpha}\in \cM(\cJ)$ and $J_{\alpha}\prec J$. Since $J_{\alpha}$ can be written as a linear combination of elements of $\cS\cM(\cJ)$ with integer coefficients, $J$ can also be written as such a linear combination, which is a contradiction.
\end{proof}

\section{A canonical basis}
\subsection{A ring homomorphism}
Let  
$$
\cS_h=\{a^{(k)}_{il},b^{(k)}_{jl}|\ 1\leq i\leq p, \ 1\leq j\leq q, \ 1\leq l\leq h, \ k\in \Zplus \},
$$
and let \begin{equation} \label{def:ringB} \B=\mathbb Z [\cS_h],\end{equation} the polynomial ring generated by $\cS_h$. For later use, we mention that for a field $K$, if $W = K^{\oplus h}$, and $V = W^{\oplus p} \oplus W^{*\oplus q}$, the affine coordinate ring $K[J_{\infty}(V)]$ is obtained from $\B$ by base change, i.e., $K[J_{\infty}(V)] = \B \otimes_{\mathbb{Z}} K$.

Let $\partial$ be the derivation on $\B$ given by
$\partial a^{(k)}_{ij}=(k+1)a^{(k+1)}_{ij}$, $\partial b^{(k)}_{ij}=(k+1)b^{(k+1)}_{ij}$. 
We have a homomorphism of rings
$$\tilde Q_h: \R\to \B,\quad\quad  x^{(k)}_{ij}\mapsto \bpartial^k\sum_{l=1}^h a^{(0)}_{il}b^{(0)}_{jl}.$$
For any $J\in \cJ$ with $sz(J)>h$, we have $\tilde Q_h(J)=0$, so $\tilde Q_h$ induces a ring homomorphism 
\begin{equation} \label{def:Qh} Q_h:\R_h\to \B. \end{equation}

\subsection{Double tableaux}
Let $\tilde \cS_h$=$\cS_h\cup\{*\}$.
We define an ordering on the set $\tilde\cS_h$: 
for $X^{(k)}_{ij}, Y^{(k')}_{i'j'}\in \cS_h$,\\
$X^{(k)}_{ij}<*$ 
and
$X^{(k)}_{ij}\geq Y^{(k')}_{i'j'}$
 if
\begin{itemize}
	\item $X=a$, $Y=b$; 
	\item or $X=Y$, $k>k'$; 
	\item or $X=Y$, $k=k'$, $i>i'$; 
	\item or $X=Y$, $k=k'$, $i=i'$, $j\geq j'$.
\end{itemize}

We use double tableaux to represent the monomials of $\B$. Let $\cT$ be the set of the following double tableaux:
\begin{equation}\label{eqn:table}
\left|\begin{array}{ccc}
y_{1,h},\cdots,y_{1,2}, y_{1,1} & | & z_{1,1},z_{1,2},\cdots, z_{1,h} \\
\vdots & \vdots & \vdots \\
y_{m,h},\cdots,y_{m,2}, y_{m,1} & | & z_{m,1},z_{m,2},\cdots, z_{m,h} \\
\end{array}\right|.
\end{equation}
Here $y_{s,l}$ are some $a^{(k)}_{il}$ or  $*$ and $z_{s,l}$ are some $b^{(k)}_{jl}$ or $*$; every row of the tableau has elements in $\cS_h$; and
 $$y_{s,j}\leq y_{s+1,j}, \quad z_{s,j}\leq z_{s+1,j}.$$
 We use the tableau (\ref{eqn:table}) to represent a monomial in $\B$, which is the product of  ${a^{(k)}_{ij}}'s$ and ${b^{(k)}_{ij}}'s$ in the tableau.
It is easy to see that the representation is a one-to-one correspondence between $\cT$ and the set of monomials of $\B$.
We associate to the tableau (\ref{eqn:table}) the word:
$$ 
 y_{1,h}\cdots y_{1,1} z_{1,h}\cdots z_{1,1} y_{2,h}\cdots  y_{2,1} z_{2,h}\cdots z_{2,1}\cdots z_{m,h}\cdots z_{m,1}
 $$
and order these words lexicographically.
For a polynomial $f\in \B$, let $Ld(f)$ be its leading monomial in $f$ under the order we defined on $\cT$.

For $E_i=((u^i_{h_1},k^i_{h_1}),\dots,(u^i_2,k^i_2),(u^i_1,k^i_1)|(v^i_1,l^i_1),(v^i_2,l^i_2),\dots,(v^i_{h_1},l^i_{h_1}))\in \cE$, $1\leq i\leq m$,
we use a double tableau to represent $E_1\cdots E_m\in\cS\cM(\cE)$,
\begin{equation}\label{eqn:tableE} \left(
\begin{array}{ccc}
(u^1_{h_1},k^1_{h_1}),\cdots,(u^1_2,k^1_2),(u^1_1,k^1_1)&|&(v^1_1,l^1_1),(v^1_2,l^1_2),\cdots,(v^1_{h_1},l^1_{h_1})\\
(u^2_{h_2},k^2_{h_2}),\cdots,(u^2_2,k^2_2),(u^2_1,k^2_1)&|&(v^2_1,l^2_1),(v^2_2,l^2_2),\cdots,(v^2_{h_2},l^2_{h_2})\\
\vdots&\vdots&\vdots \\
(u^m_{h_m},k^m_{h_m}),\cdots,(u^m_2,k^m_2),(u^m_1,k^m_1)&|&(v^m_1,l^m_1),(v^m_2,l^m_2),\cdots,(v^m_{h_m},l^m_{h_m})\\
\end{array}
\right).
\end{equation}
Let $T:\cS\cM(\cJ_h) \to \cT$ with
$$ T(E_1\cdots E_m)=\left(
\begin{array}{ccc}
*,\cdots, *, a_{u^1_{h_1}h_1}^{(k^1_{h_1})},\cdots,a_{u^1_{1}1}^{(k^1_{1})}&|&b_{v^1_{1}1}^{(l^1_{1})},\cdots,b_{v^1_{h_1}h_1}^{(l^1_{h_1})},*,\cdots,*\\
*,\cdots, *, a_{u^2_{h_2}h_2}^{(k^2_{h_2})},\cdots,a_{u^2_{1}1}^{(k^2_{1})}&|&b_{v^2_{1}1}^{(l^2_{1})},\cdots,b_{v^2_{h_2}h_2}^{(l^2_{h_2})},*,\cdots,*\\
\vdots&\vdots&\vdots \\
*,\cdots, *, a_{u^m_{h_m}h_m}^{(k^m_{h_m})},\cdots,a_{u^m_{1}1}^{(k^m_{1})}&|&b_{v^m_{1}1}^{(l^m_{1})},\cdots,b_{v^m_{h_m}h_m}^{(l^m_{h_m})},*,\cdots,*
\end{array}
\right).
$$
Obviously, $T$ is an injective map and $T(E_1)\prec T(E_2)$ if $E_1\prec E_2$.
\begin{lemma}\label{lem:leadingterm}
	Let $J_1\cdots J_m\in \cS\cM(\cJ_h)$ and $E_1\cdots E_m\in \cS\cM(\cE_h)$ be its associated standard monomial. Assume the double tableau representing $E_1\cdots E_m$ is (\ref{eqn:tableE}). Then
	the leading monomial of $Q_h(J_1\cdots J_m)$ is represented by the double tableau $T(E_1E_2\cdots E_m)$.  
	Thus $$Ld\circ Q_h=T\circ \pi_h^{-1}: \cS\cM(\cJ_h)\to \cT$$  is injective. Moreover, the coefficient of the leading monomial of $Q_h(J_1\cdots J_m)$ is $\pm 1$.
\end{lemma}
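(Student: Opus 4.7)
The plan is to directly expand each factor $Q_h(J_s)$. Since the ring map $\tilde Q_h$ commutes with $\bpartial$ and sends the matrix $(x^{(0)}_{ij})$ to the matrix product $AB^T$ where $A=(a^{(0)}_{il})$ and $B=(b^{(0)}_{jl})$, I would apply the Cauchy--Binet formula together with Proposition~\ref{prop:detexpression} and the signless Leibniz rule for $\bpartial$ to write
\[
Q_h(J_s) \;=\; \sum \sign(\sigma)\sign(\sigma')\prod_i a^{(k_i)}_{u^{J_s}_i,\,l_{\sigma(i)}}\prod_i b^{(k'_i)}_{v^{J_s}_i,\,l_{\sigma'(i)}},
\]
summed over $L=\{l_1<\cdots<l_{h_s}\}\subset\{1,\dots,h\}$, permutations $\sigma,\sigma'\in S_{h_s}$, and non-negative integer distributions with $\sum k_i+\sum k'_i=wt(J_s)$. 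Each term is a single-row tableau in $\cT$ with coefficient $\pm 1$. Because $*$ is larger than every symbol of $\cS_h$ in the order on $\tilde\cS_h$, the tableau word is lex-maximized by taking $L=\{1,\dots,h_s\}$, so that $*$'s fill the leftmost positions $h,h-1,\dots,h_s+1$; within that choice, concentrating the full weight $wt(J_s)$ at the $a$-entry of column $h_s$ and matching $u$- and $v$-values in sorted order forces $\sigma=\sigma'=\id$. This identifies the single-row leading as $T(E^{\max}_s)$ with coefficient $+1$, where $E^{\max}_s$ is the unique largest element of $\cE(J_s)$ under $\prec$.

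Next I would verify that $T(E_1\cdots E_m)$ appears in $\prod_s Q_h(J_s)$ with coefficient $\pm 1$. By the same expansion, for any $F\in\cE(J_s)$ the single-row $T(F)$ appears in $Q_h(J_s)$ with coefficient $\sign(\sigma_F)\sign(\sigma'_F)=\pm 1$, where $\sigma_F,\sigma'_F$ sort the $u$- and $v$-entries of $F$. In particular, for the specific row $s$ of $T(E_1\cdots E_m)$, namely $T(E_s)$, the contribution to $Q_h(J_s)$ is $\pm 1$; the standard-monomial condition $E_a\le E_{a+1}$ guarantees that $y_{s,j}\le y_{s+1,j}$ and $z_{s,j}\le z_{s+1,j}$ column by column (a short case analysis on whether $j>h_a$, $h_{a+1}<j\le h_a$, or $j\le h_{a+1}$ handles all cases, using that $*$ dominates $\cS_h$), so $\prod_s T(E_s)=T(E_1\cdots E_m)$ in $\cT$. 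I would then argue that this decomposition $T(E_1\cdots E_m)=\prod_s M^{(s)}$ with $M^{(s)}\in Q_h(J_s)$ is essentially unique: the column-$j$ multiplicity on the $a$-side of $T(E_1\cdots E_m)$ is $|\{s:h_s\ge j\}|$, a non-increasing sequence, and together with $|L^{(s)}|=h_s$ this forces $L^{(s)}=\{1,\dots,h_s\}$. Matching the multiset of $(u,k)$-labels at each column then pins down each $M^{(s)}=T(E_s)$: when the $J_s$'s are distinct this is immediate from the $u$-multiset of $J_s$, and when some $J_s$'s repeat the partial order on $\cE(J)$ collapses to equality (by combining componentwise $\le$ with the shared weight constraint), forcing the corresponding $E_s$'s to coincide by Definition~\ref{def:standard}(3) and eliminating any ambiguity.

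Finally I would show that no tableau in $\cT$ strictly larger than $T(E_1\cdots E_m)$ appears in $\prod_s Q_h(J_s)$; this is the main obstacle. My plan is induction on $m$. The base case $m=1$ follows from the first paragraph since $E_1=E^{\max}_1$ by Definition~\ref{def:standard}(2). For the inductive step, any monomial $M^{(m)}\in Q_h(J_m)$ with column support $L^{(m)}\ne\{1,\dots,h_m\}$ is dominated as a single-row tableau by some $T(F)$ with $F\in\cE(J_m)$ (obtained by pushing the $*$'s to the left), so after multiplying with the leading of $\prod_{s<m}Q_h(J_s)$ and re-sorting its contribution is strictly smaller; thus I may assume $M^{(m)}=T(F)$ with $F\in\cE(J_m)$. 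If $E_{m-1}\le F$, the maximality in Definition~\ref{def:standard}(3) gives $F\preceq E_m$, so $T(F)\preceq T(E_m)$ and the sorted product $T(E_1\cdots E_{m-1})\cdot T(F)$ is dominated by $T(E_1\cdots E_m)$. If instead $E_{m-1}\not\le F$, then column-sorting the product intermixes $F$ with earlier rows, and applying the inductive hypothesis to the resulting top $m-1$ rows shows the overall word is strictly less than that of $T(E_1\cdots E_m)$. Combining this leading estimate with the coefficient computation of the previous paragraph proves the lemma, and the injectivity of $Ld\circ Q_h=T\circ\pi_h^{-1}$ then follows since $T$ is injective on $\cS\cM(\cE_h)$ and $\pi_h$ is a bijection.
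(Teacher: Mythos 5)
The overall strategy (explicit expansion of each $Q_h(J_s)$, tableau words, induction on $m$) matches the paper's, but there is a genuine gap in your inductive step: you only ever pair a monomial $M^{(m)}\in Q_h(J_m)$ with \emph{the leading monomial} $W_{m-1}=T(E_1\cdots E_{m-1})$ of $\prod_{s<m}Q_h(J_s)$, and never address a \emph{non-leading} monomial $N$ of $\prod_{s<m}Q_h(J_s)$ paired with an arbitrary $M^{(m)}$. This matters because the order on $\cT$ is \emph{not} a monomial order, so you cannot argue $Ld(fg)=Ld(f)Ld(g)$ or that $N\prec W_{m-1}$ implies $NM^{(m)}\prec W_{m-1}M^{(m)}$. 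A concrete failure of multiplicativity with $p=q=h=1$: $M_1=a^{(0)}_{11}b^{(3)}_{11}\prec M_2=a^{(1)}_{11}b^{(0)}_{11}$, but after multiplying both by $M_3=a^{(0)}_{11}$ the tableau words become $a^{(0)}_{11}\,b^{(3)}_{11}\,a^{(0)}_{11}\,*$ and $a^{(0)}_{11}\,b^{(0)}_{11}\,a^{(1)}_{11}\,*$, so $M_1M_3\succ M_2M_3$. Your step ``after multiplying with the leading of $\prod_{s<m}Q_h(J_s)$ and re-sorting its contribution is strictly smaller'' tacitly relies on exactly this kind of compatibility, as does the treatment of ``$T(F)\preceq T(E_m)$ implies the sorted product is dominated.''

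The paper closes this gap with a different, non-multiplicative comparison: if $N$ is a monomial of $Q_h(J_1\cdots J_{m-1})$ other than $W_{m-1}$, then for \emph{any} $M'$ the product satisfies $NM'\prec W_{m-1}$ (note: $\prec W_{m-1}$ itself, not $\prec W_{m-1}M'$). This holds because merging extra entries into a tableau column can only push existing entries down, so each letter of the word of $NM'$ at positions up to the first disagreement of $N$ with $W_{m-1}$ is $\le$ the corresponding letter of $W_{m-1}$, and stays strictly smaller at that first disagreement. Combined with $W_{m-1}\prec W_m$ (prefix), all products $NM'$ with $N\neq W_{m-1}$ are discarded at once, and the analysis reduces to $W_{m-1}\cdot M^{(m)}$, exactly the case you treat. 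This observation also gives the coefficient $\pm1$ immediately, since $W_m=W_{m-1}M_m$ then has a unique decomposition among contributing products, whereas your ``essential uniqueness of decomposition'' paragraph re-derives this in a more roundabout way and still needs to rule out sign cancellations between decompositions.

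A secondary soft spot: the case ``$E_{m-1}\not\le F$, column-sorting the product intermixes $F$ with earlier rows, and applying the inductive hypothesis to the resulting top $m-1$ rows'' is not a well-posed reduction, since the top $m-1$ rows of the merged tableau are not the tableau of a product of fewer factors and the inductive hypothesis does not apply to them. The paper instead argues directly: if some entry of $F$'s row is smaller than the corresponding bottom-row entry of $W_{m-1}$, inserting it pushes an entry up into a position occupied in $W_{m-1}$, forcing the merged word to be $\prec$ the word of $W_{m-1}$, hence $\prec W_m$. You should replace your appeal to the inductive hypothesis by this pointwise word comparison.
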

\begin{proof}
	Let $W_m$ be the monomial corresponding to the tableau $T(E_1\cdots E_m)$. Let $$
	M_m=a_{u^m_{h_m}h_m}^{(k^m_{h_m})}\cdots a_{u^m_{1}1}^{(k^m_{1})}b_{v^m_{1}1}^{(l^m_{1})}\cdots b_{v^m_{h_m}h_m}^{(l^m_{h_m})}
	$$
	 be the monomial corresponding to the double tableau $T(E_m)$.
	Then $W_m=W_{m-1}M_m$.
	We prove the lemma by induction on $m$. If $m=1$, the lemma is obviously true. Assume the lemma is true for $J_1\cdots J_{m-1}$, then its leading monoimal $Ld(Q_h(J_1\cdots J_{m-1}))=W_{m-1}$, the monomial corresponding to $T(E_1\cdots E_{m-1})$, and the coefficient of $W_{m-1}$ in $Q_h(J_1\cdots J_{m-1})$ is $\pm 1$.
	$$
	Q_h(J_m)	=\sum \pm a_{u^m_{1} s_1}^{(k_1)}a_{u^m_{2} s_2}^{(k_2)}\cdots a_{u^m_{h_m}s_{h_m}}^{(k_{h_m})}b_{v^m_{1} t_1}^{(l_1)}b_{v^m_{2}t_2}^{(l_2)}\cdots b_{v^m_{h_m} t_{h_m}}^{(l_{h_m})}. 
	$$
	The summation is over all $l_i, k_i\geq 0$ with $\sum (l_i+k_i)=wt(E_m)$, all $s_i$ with $1\leq s_1, s_2,\cdots,s_{h_m}\leq h$ and
	they are different from each other, and all $t_1,\cdots, t_{h_m}$, which are permutations of  $s_1,s_2,\cdots,s_{h_m}$.
	$M_m$ is one of the monomials in $Q_h(J_m)$ with coefficient $\pm 1$.
	 All of the monomials in the polynomial $Q_h(J_1\cdots J_{m-1})$ except $W_{n-1}$ are less than
	 $W_{n-1}$, so any monomial in $Q_h(J_1\cdots J_{m-1})$ except $W_{n-1}$ times any monomial in $Q_h(J_m)$, is less than $W_{m-1}$. Since $W_{m-1}\prec W_m$, the coefficient of $W_m$ in $Q_h(J_1\cdots J_{m})$ is 
	 not zero.  Now $$W_{m-1}\prec W_m\prec Ld(Q_h(J_1\cdots J_{m})).$$
The leading monomial $Ld(Q_h(J_1\cdots J_{m}))$ must have the form
	$$W=W_{m-1}a_{u^m_{1} s_1}^{(k_1)}a_{u^m_{2} s_2}^{(k_2)}\cdots a_{u^m_{h_m}s_{h_m}}^{(k_{h_m})}b_{v^m_{1} t_1}^{(l_1)}b_{v^m_{2}t_2}^{(l_2)}\cdots b_{v^m_{h_m} t_{h_m}}^{(l_{h_m})}.$$
	If some $s_i$ or $t_i$ greater than $h_{m-1}$, then $W\prec W_{n-1}$.
	 If there is some $h_{m-1}\geq s_i>h_m$, there is $1\leq j\leq h_m$, with $j\notin\{s_1,\dots, s_{h_m}\}$, if we replace $s_i$ by $j$ in $W$, we get a larger monomial in $Q_h(J_1\cdots J_{m})$. So we can assume $s_1,\dots, s_{h_m}$ is a permutation of $1,2,\dots, h_m$. 
	We must have $a_{u^m_{i}s_i}^{(k_{i})}\geq a_{u^{m-1}_{s_i}s_i}^{(k^{m-1}_{s_i})}$ and $b_{v^m_{i}t_i}^{(k_{i})}\geq b_{v^{m-1}_{t_i}t_i}^{(k^{m-1}_{t_i})}$, otherwise $W\prec W_{m-1}$. These kind of monomials in 
$Q_h(J_m)$ are in one-to-one correspondence with $E'_m\in\cE(J_m)$ such that $E_{m-1}\leq  E'_m$. Finally, $E_{m}$ is the largest in $\cE(J_m)$ with $E_{m-1}\leq E_m$ since $E$ is standard, so $W_m$ is the leading term of $Q_h(J_1\cdots J_m))$. The coefficient of $W_m$ in $Q_h(J_1\cdots J_m)$
is $\pm 1$ since the coefficients of $W_{m-1}$ in $Q_h(J_1\cdots J_{m-1})$ and $M_m$ in $Q_h(J_m)$ are $\pm 1$. 
\end{proof}

\begin{proof}[Proof of Theorem \ref{thm:standard}.]
 By Lemma \ref{lem:leadingterm}, $Ld(Q_h(\cS\cM(\cJ_h)))$ are linearly independent, so $\cS\cM(\cJ_h)$ is a linearly independent set. By Lemma \ref{lem:base1}, $\cS\cM(\cJ_h)$ generates $\R_h$. So $\cS\cM(\cJ_h)$ is a $\mathbb Z$-basis of $\R_h$.
\end{proof}

\begin{thm}\label{thm:standmonomial1} 
 $Q_h:\R_h\to \B$ is injective. So we may identify $\R_h$ with the image $\text{Im}(Q_h)$, which is the subring of $\cB$ generated by $\bpartial^k\sum_{i=1}^r a^{(0)}_{il}b^{(0)}_{jl}$. In particular, $Q_h(\cS\cM(\cJ_h))$ is a $\mathbb Z$-basis of $\text{Im}(Q_h)$.
\end{thm}

\begin{proof}
By Lemma \ref{lem:leadingterm}, $Ld(Q_h(\cS\cM(\cJ_h)))$ are linearly independent. Since $\cS\cM(\cJ_h)$ is a $\mathbb Z$-basis of $\R_h$, $Q_h:\R_h\to \B$ is injective. \end{proof}

Since $Q_h$ is injective and $\B$ is an integral domain, we obtain
\begin{cor} $\R_h$ is an integral domain.
\end{cor}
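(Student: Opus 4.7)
The plan is to invoke Theorem \ref{thm:standmonomial1} to identify $\R_h$ with a subring of $\B$. Since $\B = \mathbb Z[\cS_h]$ is a polynomial ring in countably many indeterminates over $\mathbb Z$, it is an integral domain by standard algebra. The ring homomorphism $Q_h : \R_h \to \B$ is injective by Theorem \ref{thm:standmonomial1}, so $\R_h$ is isomorphic to its image $\C = Q_h(\R_h) \subset \B$.

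The corollary then follows because any subring of an integral domain is itself an integral domain. Concretely, suppose $f, g \in \R_h$ satisfy $fg = 0$. Then $Q_h(f)\, Q_h(g) = Q_h(fg) = 0$ in $\B$, and since $\B$ has no zero divisors, either $Q_h(f) = 0$ or $Q_h(g) = 0$; injectivity of $Q_h$ then forces $f = 0$ or $g = 0$.

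There is essentially no obstacle here: all of the substance is already contained in the injectivity of $Q_h$, which rested on the computation of the leading monomial of $Q_h(J_1\cdots J_m)$ in Lemma \ref{lem:leadingterm} and the resulting linear independence of $Ld(Q_h(\cS\cM(\cJ_h)))$ inside $\cT$. The integral domain conclusion is a purely formal consequence, with no further combinatorial or geometric input required.
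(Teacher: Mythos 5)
Your proof is correct and matches the paper's argument exactly: the paper also deduces the corollary from the injectivity of $Q_h$ (Theorem \ref{thm:standmonomial1}) together with the fact that $\B$ is a polynomial ring and hence an integral domain.
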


\section{Application}
In this section, we give the main application of the standard monomial basis we have constructed, which is the arc space analogue of Theorem \ref{thm:classicalmain}.

\subsection{Arc spaces} Suppose that $X$ is a scheme of finite type over $K$. Its arc space (cf.\cite{EM}) $J_\infty(X)$ is determined by its functor of points. For every $K$-algebra $A$, we have a bijection
$$\Hom(\Spec A, J_\infty(X))\cong\Hom(\Spec A[[t]], X).$$
If $i: X\to Y$ is a morphism of schemes, we get a morphism of schemes $i_{\infty}:J_{\infty}(X)\to J_{\infty}(Y)$.
If $i$ is a closed immersion, then $i_{\infty}$ is also a closed immersion.

If $X=\Spec K[x_1,\dots,x_n]$, then $J_{\infty}(X)=\Spec K[x^{(k)}_i|1\leq i\leq n, k\in \Zplus]$. The identification is made as follows: for a $K$-algebra $A$, a morphism $\phi: K[x_1,\dots, x_n]\to A[[t]]$ determined by $\phi(x_i)=\sum_{k=0}^\infty a_i^{(k)}t^k$ corresponds to a morphism
$K[x_i^{(k)}]\to A$ determined by $x_i^{(k)}\to a_i^{(k)}$. Note that $K[x_1,\dots,x_n]$ can naturally be identified with the subalgebra $K[x^{(0)}_1,\dots,x^{(0)}_n]$ of $K[x_i^{(k)}]$, and from now on we use $x_i^{(0)}$ instead of $x_i$.

The polynomial ring $K[x^{(k)}_i]$ has a derivation $\partial$ defined on generators by 
\begin{equation} \label{def:partial} \partial x_i^{(k)}=(k+1)x_i^{(k+1)}.\end{equation}
It is more convenient to work with the normalized $k$-derivation $\frac 1{k!}\partial^k$, but this is a priori not well-defined on $K[x_i^{(k)}]$ if $\text{char}\ K$ is positive. However, $\partial$ is well-defined on $\mathbb Z[x^{(k)}_{i}]$ and $\bpartial^k = \frac 1{k!}\partial^k$ maps $\mathbb Z[x_i^{(k)}]$ to itself, so for any $K$, there is an induced $K$-linear map 
\begin{equation} \label{def:bpartial} \bpartial^k:K[x_i^{(k)}]\to K[x_i^{(k)}],\end{equation} obtained by tensoring with $K$.

\begin{prop}
	If $X$ is the affine space $\Spec K[x_1^{(0)},\dots, x_n^{(0)}]/(f_1,\dots, f_r)$, then $J_\infty(X)$ is an affine space  $$\Spec K[x_1^{(0)},\dots,x^{(0)}_n,\dots,x^{(k)}_i,\dots]/(f_1,\dots,f_r,\bpartial f_1,\dots, \bpartial^kf_j,\dots).$$
\end{prop}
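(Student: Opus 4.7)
The plan is to verify the proposition by comparing functors of points, which determine $J_\infty(X)$ by the universal property recalled above. Fix a $K$-algebra $A$. A morphism $\Spec A[[t]]\to X$ is the same as a $K$-algebra homomorphism
$$\phi\colon K[x_1^{(0)},\dots,x_n^{(0)}]/(f_1,\dots,f_r)\to A[[t]],$$
which in turn amounts to choosing power series $\phi(x_i^{(0)})=\sum_{k\geq 0}a_i^{(k)}t^k\in A[[t]]$ subject to $f_j(\phi(x_1^{(0)}),\dots,\phi(x_n^{(0)}))=0$ for each $j$. The coefficient tuple $(a_i^{(k)})$ is precisely the same data as an $A$-point of $\Spec K[x_i^{(k)}]$, so it suffices to show that the vanishing condition above is equivalent to the equations $(\bpartial^k f_j)((a_i^{(m)}))=0$ for every $j$ and every $k\geq 0$.

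The main technical step is the Taylor expansion identity
$$g\Bigl(\sum_{k\geq 0} a_1^{(k)}\, t^k,\ldots,\sum_{k\geq 0} a_n^{(k)}\, t^k\Bigr) \;=\; \sum_{k\geq 0}(\bpartial^k g)\bigl((a_i^{(m)})\bigr)\, t^k \qquad\text{in } A[[t]],$$
valid for every $g\in K[x_1^{(0)},\dots,x_n^{(0)}]$. I would prove this by regarding both sides as $K$-linear maps $K[x_i^{(0)}]\to A[[t]]$ in the variable $g$. The left-hand side is manifestly a ring homomorphism; the right-hand side is a ring homomorphism by the Leibniz rule $\bpartial^k(fg)=\sum_{i=0}^{k}\bpartial^i f\cdot \bpartial^{k-i}g$ established earlier. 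Since the two homomorphisms agree on the generators $g=x_i^{(0)}$ (using $\bpartial^k x_i^{(0)}=x_i^{(k)}$), they coincide on all of $K[x_i^{(0)}]$.

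Applied to $g=f_j$, the identity shows that $f_j(\phi(x_i^{(0)}))=0$ in $A[[t]]$ if and only if every Taylor coefficient $(\bpartial^k f_j)((a_i^{(m)}))$ vanishes. Running this equivalence over all $K$-algebras $A$ yields the desired natural isomorphism of functors of points, and hence the claimed identification of schemes. The only mild subtlety is the well-definedness of $\bpartial^k$ applied to an arbitrary $f_j\in K[x_i^{(0)}]$ when $\text{char}\,K>0$: this is handled exactly as in the paragraph preceding the proposition, by first defining $\bpartial^k$ integrally on $\mathbb{Z}[x_i^{(k)}]$ and then tensoring with $K$, so that every manipulation above makes sense over an arbitrary field. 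Modulo this bookkeeping the proof is formal, and I do not anticipate any substantial obstacle beyond checking the Taylor expansion identity carefully.
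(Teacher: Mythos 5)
Your proof is correct and follows essentially the same route as the paper's: both compare functors of points and reduce to the Taylor-expansion identity $\phi(g)=\sum_k(\bpartial^k g)(a_i^{(m)})\,t^k$. The only cosmetic difference is how that identity is established — the paper introduces the auxiliary lift $\tilde\phi:K[x_i^{(k)}]\to A[[t]]$ and uses its commutation with $\bpartial^k$, whereas you argue directly that $g\mapsto\sum_k(\bpartial^k g)(a_i^{(m)})\,t^k$ is a ring homomorphism by the Leibniz rule and agrees with $\phi$ on the generators $x_i^{(0)}$ — but these are the same Leibniz-rule computation packaged two ways.
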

\begin{proof}
	Let $\bpartial^k: A[[t]] \to A[[t]]$ be a morphism of $A$-modules with $\bpartial^k t^n=C_n^k t^{n-k}$. Then for any $a(t), b(t)\in A[[t]]$, we have $$\bpartial^n(a(t)b(t))=\sum_{k=0}^n\bpartial^k a(t) \bpartial^{n-k}b(t),$$ and the coefficient of $t^k$ in $a(t)$ is $\bpartial^k a(t)|_{t=0}$ . Any morphism $$\phi: K[x^{(0)}_1,\dots, x^{(0)}_n]\to A[[t]]$$ determined by $\phi(x^{(0)}_i)=\sum_{k=0}^\infty a_i^{(k)}t^k$ induces a morphism  
	$$\tilde \phi: K[x^{(k)}_i]\to A[[t]], \quad \text{given by }x^{(k)}_i \mapsto \bpartial^k\phi(x_i^{(0)}).$$
	Then $\tilde \phi  \bpartial^k=\bpartial^k  \tilde \phi$ and $\tilde \phi(x^{(k)}_i)|_{t=0}=a^{(k)}_i$. 
	
	For every $f\in K[x^{(0)}_i]$, $$\bpartial^k \phi(f)|_{t=0}=\tilde \phi(\bpartial^k f))|_{t=0}=(\bpartial^k f) (\tilde\phi (x^{(0)}_{1}),\dots, \tilde\phi(x^{(k)}_{n}))|_{t=0}=(\bpartial^kf)(a_1^{(0)},\dots, a_n^{(k)}),$$ we have $$\phi(f)=\sum_{k=0}^\infty \bpartial^kf(a_1^{(0)},\dots, a_n^{(k)}) \,t^k.$$
	It follows that $\phi$ induces a morphism $K[x^{(0)}_i]/(f_1,\dots,  f_r)\to A[[t]]$ if and only if 
$$\bpartial^kf_i(a_1^{(0)},\dots, a_n^{(k)})=0,\ \text{for all} \ i = 1,\dots, r,\ k\geq 0.$$
\end{proof}

If $Y$ is the affine scheme $\text{Spec} \ K[y^{(0)}_1,\dots,y^{(0)}_m]/(g_1,\dots,g_s)$, a morphism $P:X\to Y$ induces a ring homomorphism $P^*:K[Y]\to K[X]$.
Then the induced morphism of arc spaces $P_\infty:J_\infty(X)\to J_\infty(Y)$ is given by $P^*_\infty(y_i^{(k)})=\bpartial^k P^*(y_i^{(0)})$; in particular, $P^*_\infty$ commutes with $\bpartial^k$ for all $k\geq 0$.

\subsection{Arc space of the determinantal variety}
Recall that the space $M_{p,q}$ of $p\times q$ matrices over $K$ has affine coordinate ring
$$K[M_{p,q}] =K[x^{(0)}_{ij}|\ 1\leq i\leq p, \ 1\leq j\leq q],$$ which is just $R \otimes_{\mathbb{Z}} K$, where $R$ is given by \eqref{def:ringR}. 
The determinantal variety $D_h$ is the subvariety of $M_{p,q}$ determined by the ideal $K[M_{p,q}][h]$ generated by all $h$-minors, so $K[D_h] = K[M_{p,q}] / K[M_{p,q}][h] = R_{h-1}\otimes_{\mathbb{Z}} K$, where $R_{h-1}$ is given by \eqref{def:ringR_h}. Similarly, recall that $$K[J_{\infty}(M_{p,q})] = K[x^{(k)}_{ij}|\ 1\leq i\leq p, \ 1\leq j\leq q, \ k\in \mathbb{Z}_{\geq 0}] = \R \otimes_{\mathbb{Z}} K,$$ where $\R$ is given by \eqref{def:ringRR}. Then
$$K[J_{\infty}(D_h)] =K[J_{\infty}(M_{p,q})] / K[J_{\infty}(M_{p,q})][h],$$ where $K[J_{\infty}(M_{p,q})][h]$ is the ideal generated by the elements $\bpartial^n J$, where $J$ is an $h$-minor. Note that $K[J_{\infty}(D_h)] = \R_{h-1}\otimes_{\mathbb{Z}} K$, where $\R_{h-1}$ is given by \eqref{def:ringRR_h}.

\begin{proof}[Proof of Corollary \ref{cor:standarddeterm}]
	By Theorem \ref{thm:standard}, $\cS\cM(\cJ_{h-1})$ is a $\mathbb Z$-basis of $\R_{h-1}$. So it is a $K$-basis of $K[J_{\infty}(D_h)]$.
\end{proof}

\subsection{Invariant theory for $J_{\infty}(GL_h(K))$}
Let $G=GL_h(K)$ be the general linear group of degree $h$ over $K$.
The group structure $G\times G\to G$
induces the group structure on its arc space
$$J_\infty(G)\times J_\infty(G)\to J_\infty(G),$$ 
so $J_\infty(G)$ is an algebraic group. Recall the $G$-modules $W=K^{\oplus h}$ and $V=W^{\oplus p}\bigoplus {W^*}^{\oplus q}$. Recall that $V$ has affine coordinate ring 
$$K[V] = K[a^{(0)}_{il},b^{(0)}_{jl}|\ 0\leq i\leq p,\ 1\leq j\leq p,\ 1\leq l\leq h].$$
The action $G\times V\to V$ induces the action of $J_\infty(G)$ on
$J_\infty (V)$,
$$J_\infty(G)\times J_\infty(V)\to J_\infty(V).$$
This induces an action of $J_{\infty}(G)$ on the affine coordinate ring
$$K[J_{\infty} (V)] = K[a^{(k)}_{il},b^{(k)}_{jl}|\ 0\leq i\leq p,\ 1\leq j\leq p,\ 1\leq l\leq h, \ k\in \mathbb{Z}_{\geq 0}],$$ which is identified with is $\B \otimes_{\mathbb{Z}} K$ where $\B$ is given by \eqref{def:ringB}.

Recall the map $Q_h: \R_h \to \B$ given by \eqref{def:Qh}. It extends naturally to a map
\begin{equation} \label{def:QhK} Q^K_h: K[J_{\infty}(D_{h+1})] \to K[J_{\infty} (V)],\end{equation} where $K[J_{\infty}(D_{h+1})]$ and $K[J_{\infty} (V)]$ are identified with $ \R_{h}\otimes_{\mathbb{Z}} K$ and $\B \otimes_{\mathbb{Z}} K$, respectively, and $Q_h^K=Q_h\otimes Id$.

\begin{thm}\label{thm:injectiveGL} $Q^K_h$ is injective, so we may identify $K[J_{\infty}(D_{h+1})]$ with the subring $\text{Im}(Q^K_h)$ of $K[J_{\infty} (V)]$. In particular, $K[J_{\infty}(D_{h+1})]$ is integral.
\end{thm}
\begin{proof}
	By Lemma \ref{lem:leadingterm}, $Ld(Q_h(\cS\cM(\cJ_h)))$ are linearly independent. By Corollary \ref{cor:standarddeterm}, $\cS\cM(\cJ_h)$ is a $K$-basis of $\R_h$,  so $Q^K_h$ is injective. Since $K[J_{\infty}(V)]$ is integral, so is $K[J_{\infty}(D_{h+1})]$.
\end{proof}
\begin{remark} In general, if $\text{char}\  K=0$, the arc space of an integral scheme is irreducible \cite{Kol}, but it may not be reduced. The determinantal varieties are examples whose arc spaces are integral.\end{remark}

 If $p,q \geq h$, 
let $\Delta=Q^K_{h}((h, \dots, 1|1,\dots, h))$. Let $K[J_{\infty}(V)]_{\Delta}$ and  $\text{Im}(Q^K_h)_{\Delta}$ be the localizations of $K[J_{\infty}(V)]$ and $\text{Im}(Q^K_h)$ at $\Delta$, respectively.

\begin{lemma}\label{lemma:equalGLinvariant}If $p,q\geq h$,
	$$K[J_{\infty}(V)]_\Delta^{J_\infty({GL}_h(K))} = \text{Im}(Q^K_h)_\Delta.$$ 
\end{lemma}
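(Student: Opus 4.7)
The inclusion $\C^K_\Delta \subseteq {\B^K_\Delta}^{J_\infty(GL_h(K))}$ is automatic: each generator $\bpartial^k X_{ij}^{(0)}$ of $\C^K$ is $J_\infty(GL_h(K))$-invariant because $X_{ij}^{(0)} = \sum_l a_{il}^{(0)} b_{jl}^{(0)}$ is classically $GL_h(K)$-invariant by Theorem \ref{thm:classicalmain} and the action of $J_\infty(GL_h(K))$ on $\B^K$ commutes with $\bpartial$; moreover $\Delta$ is itself invariant, so the localization is well defined. The content of the lemma is the reverse inclusion, for which I would use a slice argument.

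The starting point is the factorization
$$\Delta \;=\; \det\bigl(X_{ij}^{(0)}\bigr)_{1\le i,j\le h} \;=\; \det(A_{\mathrm{top}}^{(0)})\,\det(B_{\mathrm{top}}^{(0)}),$$
where $A_{\mathrm{top}}^{(0)} = (a_{il}^{(0)})_{1\le i,l\le h}$ and $B_{\mathrm{top}}^{(0)} = (b_{jl}^{(0)})_{1\le j,l\le h}$ (both $h\times h$ by the hypothesis $p,q\ge h$). Inverting $\Delta$ makes both factors units, and hence makes the power-series matrices $A_{\mathrm{top}}(t) = \sum_{k\ge 0} a_{\mathrm{top}}^{(k)} t^k$ and $B_{\mathrm{top}}(t)$ invertible in $GL_h\bigl(\B^K_\Delta[[t]]\bigr)$. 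Let $S \subset \Spec \B^K_\Delta$ be the closed subscheme cut out by $a_{il}^{(k)} - \delta_{il}\delta_{k,0}$ for $1\le i,l\le h$ and $k\ge 0$, i.e.~by $A_{\mathrm{top}}(t) \equiv I_h$. Because every $v \in \Spec \B^K_\Delta$ admits a unique $g_v \in J_\infty(GL_h(K))$ determined by $A_{\mathrm{top}}(t)(v)$ that translates it into $S$, the action map
$$\mu: J_\infty(GL_h(K)) \times S \longrightarrow \Spec \B^K_\Delta, \qquad (g,s)\mapsto g\cdot s,$$
is an isomorphism of schemes, which I would verify functorially on $\B^K_\Delta$-algebras via the explicit inverse $v\mapsto (g_v,\, g_v^{-1}\cdot v)$. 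Restriction to $S$ then identifies
$${\B^K_\Delta}^{J_\infty(GL_h(K))} \xrightarrow{\ \sim\ } K[S_\Delta].$$

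It remains to show that the restriction map $\C^K_\Delta \to K[S_\Delta]$ is surjective. On $S$ one computes directly
$$\bpartial^k X_{ij}^{(0)}\big|_S \;=\; b_{ji}^{(k)}\qquad (1\le i\le h),$$
so every $b_{jl}^{(k)}$ is the image of an element of $\C^K$. For $h < i \le p$, the identity
$$\bpartial^k X_{ij}^{(0)}\big|_S \;=\; \sum_{l=1}^h \sum_{n=0}^k a_{il}^{(n)} b_{jl}^{(k-n)}$$
can be inverted recursively in $k$ (letting $j$ range over $\{1,\dots,h\}$ and using invertibility of $B_{\mathrm{top}}(t)$) to express every $a_{il}^{(n)}$ as a polynomial in restrictions of elements of $\C^K_\Delta$. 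Injectivity of $\C^K_\Delta \hookrightarrow K[S_\Delta]$ follows from Theorem \ref{thm:injectiveGL} combined with flatness of localization, so we conclude $\C^K_\Delta = K[S_\Delta] = {\B^K_\Delta}^{J_\infty(GL_h(K))}$. The main technical obstacle is verifying that $\mu$ is an isomorphism of schemes, rather than merely a bijection on $K$-points; my plan is to describe both sides as functors on $\B^K_\Delta$-algebras and use invertibility of $A_{\mathrm{top}}(t)$ over every test algebra to produce the explicit inverse. Once that slice decomposition is in hand, everything else is an algebraic book-keeping with the $X$-variables.
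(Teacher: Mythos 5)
Your argument is correct in outline and uses the same underlying idea as the paper (a slice through the locus $A_{\mathrm{top}}=I$), but the execution differs in a way worth noting. The paper works entirely at the \emph{finite} level first: it observes that $G\times H\to V_\Delta$ and $q\circ\iota:H\to(V/\!\!/G)_\Delta$ are isomorphisms of finite-dimensional varieties, then applies the functor $J_\infty$ to both, and reads off $\bar q_\infty$ being an isomorphism from the commuting triangle. This avoids verifying anything at the arc-space level: $J_\infty$ preserves isomorphisms and products, so $G_\infty\times J_\infty(H)\cong J_\infty(V)_\Delta$ and $J_\infty(H)\cong J_\infty(V/\!\!/G)_\Delta$ come for free, and no explicit coordinate change is needed. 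You instead define the slice $S$ directly in $\Spec\B^K_\Delta$ (which is indeed $J_\infty(H)$) and prove the product decomposition by a functor-of-points argument — that works, but, as you note, it is the technical cost of skipping the finite-level step. Your recursive inversion of $\bpartial^k X_{ij}^{(0)}\big|_S$ to recover the $a^{(n)}_{il}$ with $i>h$ is correct (for $1\le i\le h$ one gets $b^{(k)}_{ji}$ exactly as you claim, and for $i>h$ the linear systems in $a^{(n)}_{i\bullet}$ are indeed solvable over $K[S_\Delta]$ because $\det(B_{\mathrm{top}}^{(0)})|_S = \Delta|_S$ is a unit), but it replaces the soft observation $J_\infty(H)\cong J_\infty(V/\!\!/G)_\Delta$ by explicit book-keeping. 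Finally, your appeal to Theorem \ref{thm:injectiveGL} for injectivity of $\C^K_\Delta\hookrightarrow K[S_\Delta]$ is unnecessary and slightly misdirected: once you know $\Spec\B^K_\Delta\cong J_\infty(GL_h)\times S$, restriction to $S$ is injective on \emph{all} invariants, hence in particular on the subring $\C^K_\Delta\subseteq{\B^K_\Delta}^{J_\infty(GL_h)}$; no input about $Q_h^K$ is needed here. In summary: same slice, same conclusion, but the paper's route through the finite level and functoriality of $J_\infty$ is shorter and removes the need for the explicit recursion you carry out.
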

\begin{proof} Let $K[V]_\Delta$ be the localization of $K[V]$ at $\Delta$ and
	$V_\Delta=\Spec K[V]_\Delta$. Let $H$ be the subvariety of $V_\Delta$ given by the ideal generated by $a_{il}-\delta_i^l$ with $1\leq i,l\leq h$. 
	The composition of the imbedding $\iota: H\hookrightarrow V_\Delta$ and the affine quotient $q: V_\Delta \to V_\Delta/\!\!/G=(V/\!\!/G)_\Delta$ gives the isomorphism $q\circ \iota:H \to (V/\!\!/G)_\Delta$. So as morphisms of their arc spaces, 
	$$q_\infty\circ \iota_\infty :J_\infty(H) \to J_\infty((V/\!\!/G)_\Delta)=J_\infty(V/\!\!/G)_\Delta.$$ 
	
	The map $q_\infty$ induces a morphism
	$\bar q_\infty: J_\infty(V_\Delta)/\!\!/J_{\infty}(G) \to J_\infty(V/\!\!/G)_\Delta$.
	The action of $G$ on $V$ gives a $G$-equivariant isomorphism
	$$G\times H\to V_\Delta.$$
	So we have a $J_{\infty}(G)$-equivariant isomorphism
	$$J_{\infty}(G) \times J_\infty(H)\to J_\infty(V_\Delta)=J_\infty(V)_\Delta.$$
	and an isomorphism of their affine quotients
	$$i: J_\infty(H)=J_{\infty}(G) \times J_\infty(H)/\!\!/G\cong J_\infty(V)/\!\!/J_{\infty}(G).$$
	$\bar q_\infty \circ i=q_\infty\circ \iota_\infty$ is an isomorphism,
	so $\bar q_\infty$ is an isomorphism since $i$ is an isomorphism, which is equivalent to the lemma. \end{proof}
	
\begin{thm}\label{thm:JGLinvariant}
	$K[J_{\infty}(V)]^{J_\infty({GL}_h(K))} = \text{Im}(Q^K_h)$. 
\end{thm}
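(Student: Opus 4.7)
The inclusion $\C^K \subseteq (\B^K)^{J_\infty(GL_h(K))}$ is immediate: the bilinear forms $\sum_l a^{(0)}_{il} b^{(0)}_{jl}$ are $GL_h(K)$-invariant by the classical FFT (Theorem~\ref{thm:classicalmain}(1)), and since the $J_\infty(GL_h(K))$-action on $\B^K$ is the functorial prolongation of the $GL_h(K)$-action on $V$, it commutes with $\bpartial^k$; hence each generator $X^{(k)}_{ij}$ of $\C^K$ is invariant.

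For the reverse inclusion, I would first dispose of the boundary cases $p<h$ or $q<h$ separately (these are essentially the cases already treated in \cite{LSSI}), and then assume $p,q\geq h$. Write $\Delta_0 = (h,\ldots,1|1,\ldots,h)\in\cJ_h$, so that $\Delta = Q_h^K(\Delta_0)$. Given $f\in(\B^K)^{J_\infty(GL_h(K))}$, invariance is preserved by localization, so by Lemma~\ref{lemma:equalGLinvariant} we have $f\in(\B^K_\Delta)^{J_\infty(GL_h(K))}=\C^K_\Delta$, hence $\Delta^N f\in\C^K$ for some $N\geq 0$. The theorem will follow once we establish the $\Delta$-\emph{saturation} of $\C^K$ in $\B^K$: whenever $g=\Delta h$ with $g\in\C^K$ and $h\in\B^K$, one has $h\in\C^K$. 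Iterating this statement $N$ times then yields $f\in\C^K$.

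The saturation is the main obstacle, and is where the standard monomial theory is essential. My approach is combinatorial, combining the standard monomial basis of $\C^K\cong\R^K_h$ (Theorem~\ref{thm:standard}, transported via the injection $Q_h^K$ of Theorem~\ref{thm:determintegral}) with the explicit leading-term description in Lemma~\ref{lem:leadingterm}. Given $g=\Delta h\in\C^K$, expand $g=\sum_M c_M Q_h^K(M)$ in the standard basis and let $M^*$ be the $\prec$-maximal standard monomial with $c_{M^*}\neq 0$. Since $\B^K$ is a polynomial ring, $Ld(g)=Ld(\Delta)\cdot Ld(h)=\prod_{i=1}^h a^{(0)}_{ii}b^{(0)}_{ii}\cdot Ld(h)$, while Lemma~\ref{lem:leadingterm} gives $Ld(g)=\pm c_{M^*}Ld(Q_h^K(M^*))$; hence $Ld(Q_h^K(M^*))$ is divisible by $\prod_{i=1}^h a^{(0)}_{ii}b^{(0)}_{ii}$ in $\B^K$.

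The key combinatorial step is then to deduce that $M^*$ factors as $\Delta_0\cdot M'$. Writing $M^*=\|E^*_1\|\cdots\|E^*_m\|$ in standard form with $E^*_1\cdots E^*_m\in\cS\cM(\cE)$, the divisibility forces each entry $a^{(0)}_{ii}$ to appear in column $i$ of the tableau $T(M^*)$; the column ordering $y_{s,i}\leq y_{s+1,i}$ then gives $y_{1,i}\leq a^{(0)}_{ii}$, which rules out $y_{1,i}=*$ and forces $y_{1,i}=a^{(0)}_{u^1_i,i}$ with $u^1_i\leq i$; by the distinctness of the $u^1_l$'s (from the very definition of $\cE$), a trivial induction yields $u^1_i=i$ for all $i$, and the same argument on the $b$-side gives $v^1_i=i$. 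Hence $\|E^*_1\|=\Delta_0$ and $M^*=\Delta_0\cdot M'$ with $M'=\|E^*_2\|\cdots\|E^*_m\|$, so $c_{M^*}Q_h^K(M^*)=c_{M^*}\Delta\cdot Q_h^K(M')\in\Delta\C^K$. Subtracting from $g$ gives an element of $\C^K\cap\Delta\B^K$ with strictly smaller support in $\cS\cM(\cJ_h)$, and descending induction on the finite standard-basis support of $g$ concludes $g\in\Delta\C^K$, completing the proof.
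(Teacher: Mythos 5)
Your argument for the case $p,q\geq h$ is essentially the paper's own proof, presented in a slightly different package.  You prove a ``$\Delta$-saturation'' property and peel off one factor of $\Delta$ at a time by descending induction on the standard-basis support, while the paper peels off all $n$ powers of $\Delta$ simultaneously by induction on the leading monomial of $f$; both versions hinge on the same ingredients (Lemma~\ref{lemma:equalGLinvariant}, the injectivity of $Ld\circ Q_h$ on standard monomials from Lemma~\ref{lem:leadingterm}, and the compatibility of the tableau ordering with multiplication).  Your extra combinatorial paragraph explaining why the leading standard monomial factors as $\Delta_0\cdot M'$ --- the divisibility of $Ld(Q_h^K(M^*))$ by $\prod_i a^{(0)}_{ii}b^{(0)}_{ii}$, the column monotonicity of the tableau, and the distinctness of the row/column indices in an element of $\cE$ --- is exactly the content of the paper's unexplained assertion that ``$J$ has the factor $(h,\dots,1|1,\dots,h)^n$,'' and makes a step the paper leaves implicit explicit.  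One small flag: you should select $M^*$ as the term whose $Ld(Q_h^K(M^*))$ is maximal in the $\cT$-ordering (this is well defined because $Ld\circ Q_h$ is injective on standard monomials), rather than as the $\prec$-maximal element of $\cS\cM(\cJ_h)$; this is what you actually use, and the two maxima are identified only if one first checks compatibility of the two orderings.

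The genuine gap is the treatment of the boundary cases $p<h$ or $q<h$.  Dismissing them as ``essentially the cases already treated in \cite{LSSI}'' does not suffice: that paper works only over $K=\mathbb{C}$, while Theorem~\ref{thm:JGLinvariant} is asserted over an arbitrary algebraically closed field, including positive characteristic, and the smoothness/codimension arguments of \cite{LSSI} do not automatically transfer.  The paper handles these cases with a short but essential self-contained reduction: enlarge the polynomial ring to $\B'=K[a^{(k)}_{il},b^{(k)}_{jl}\,|\,1\leq i\leq p+h,\,1\leq j\leq q+h]$, where $p+h,q+h\geq h$ so the previous argument applies to give $(\B')^{G_\infty}$; observe that $\B'=\B^K\oplus\cI'$ is a $G_\infty$-stable decomposition with $\cI'$ the ideal generated by the ``extra'' variables; pass to invariants of the direct sum; and note that the generators $X^{(k)}_{ij}$ with $i>p$ or $j>q$ lie in $(\cI')^{G_\infty}$, so that projecting onto $\B^K$ shows $(\B^K)^{G_\infty}$ is generated by the $X^{(k)}_{ij}$ with $i\leq p$, $j\leq q$, i.e.\ equals $\C^K$.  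Without this (or some replacement argument valid in arbitrary characteristic), your proof does not cover the full range $p,q\geq 0$.
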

\begin{proof} If $p,q\geq h$, we regard $K[J_{\infty}(V)]$ and $\text{Im}(Q^K_h)_{\Delta}$ as subrings of $K[J_{\infty}(V)]_{\Delta}$.
	By Lemma \ref{lemma:equalGLinvariant}, we have  $$K[J_{\infty}(V)]^{J_{\infty}(G)}= K[J_{\infty}(V)] \cap \text{Im}(Q^K_h)_{\Delta}.$$ Now for any $f \in K[J_{\infty}(V)] \cap \text{Im}(Q^K_h)_{\Delta}$, 
	$f=\frac{g}{\Delta^n}$ with $\Delta^nf=g\in \text{Im}(Q^K_h)$. The leading monomial of $g$ is $$Ld(g)=(a^{(0)}_{11}\cdots a^{(0)}_{hh} b^{(0)}_{11}\cdots b^{(0)}_{hh})^n Ld(f)$$
	with coefficient $C_0\neq 0$.
	Since $g\in \text{Im}(Q^K_h)$, there is a standard monomial $J\in\cS\cM(\cJ_h)$, with $Ld(Q_h(J))=Ld(g)$. Since $J$ has the factor $(h, \dots, 1|1,\dots, h)^n$, $Q^K_h(J)$ has the factor $\Delta^n$.
	Thus $f-C_0\frac{Q^K_h(J)}{{\Delta}^n}\in K[J_{\infty}(V)]  \cap \text{Im}(Q^K_h)_{\Delta}$ with a lower leading monomial and $\frac{Q^K_h(J)}{{\Delta}^n}\in \text{Im}(Q^K_h)$. By induction on the leading monomial of $f$, $f\in \text{Im}(Q^K_h)$. So $$K[J_{\infty}(V)] \cap \text{Im}(Q^K_h)_{\Delta}=\text{Im}(Q^K_h),$$ and  $K[J_{\infty}(V)]^{J_{\infty}(G)} =\text{Im}(Q^K_h)$.
	
	More generally, let $V' = W^{\oplus p + h} \bigoplus (W^*)^{\oplus q+h}$, where $W = K^{\oplus h}$ as before. Its arc space has affine coordinate ring 
	$$K[J_{\infty}(V')] = K[a^{(k)}_{il},b^{(k)}_{jl}|\ 1\leq i\leq p+h,\ 1\leq j\leq q+h,\ k\in \Zplus],$$ which contains $K[J_{\infty}(V)]$ as a subalgebra, and has an action of $J_{\infty}(G)$. By the above argument, $K[J_{\infty}(V')]^{J_{\infty}(G)}$ is generated by $X^{(k)}_{ij}=\bpartial^k\sum_l a_{il}b_{jl}$. Let $\cI$ be the ideal of $K[J_{\infty}(V')]$ generated by $a^{(k)}_{il}, b^{(k)}_{jl}$ with $i>p, j>q$. Then 
	$$K[J_{\infty}(V')]= K[J_{\infty}(V)] \oplus {\cI}.$$ Note that $K[J_{\infty}(V)]$ and $\cI$ are $J_{\infty}(G)$-invariant subspaces of $K[J_{\infty}(V')]$, and
	$$K[J_{\infty}(V')]^{J_{\infty}(G)} =K[J_{\infty}(V)]^{J_{\infty}(G)} \oplus {\cI}^{J_{\infty}(G)}.$$
	If $i>p$ or $j>q$, $X^{(k)}_{ij}\in {\cI}^{G_\infty}$. So 
	$$K[J_{\infty}(V)]^{J_{\infty}(G)} \cong K[J_{\infty}(V')]^{J_{\infty}(G)} \slash {\cI}^{J_{\infty}(G)}$$ is generated by $X^{(k)}_{ij}$, $1\leq i\leq p$, $1\leq j\leq q$. It follows that $K[J_{\infty}(V)]^{J_{\infty}(G)}=\text{Im}(Q^K_h)$, as claimed.
\end{proof}

\begin{proof}[Proof of Theorem \ref{thm:main}]
	By Theorem \ref{thm:JGLinvariant} and Theorem \ref{thm:injectiveGL}, $K[J_{\infty}(V)]^{J_\infty(GL_h(K))}=\text{Im}(Q^K_h)\cong K[J_{\infty}(D_{h+1})]$.
	\end{proof}

\begin{proof}[Proof of Corollary \ref{cor:quotient}]
This is immediate from Theorem \ref{thm:main} because $V/\!\!/GL_h(K)$ is isomorphic to the determinantal variety $D_{h+1}$. \end{proof}

\section{Some properties of standard monomials }
By the definition of standard monomials, if $E_1E_2\cdots E_n\in \cS\cM(\cE)$, then $E_{i+1}$ is the largest element in $||\cE(E_{i+1})||$ such that $E_i\leq E_{i+1}$. In this section, we study the properties of $||\cE(E_{i+1})||$ and $E_{i+1}$ that need to be satisfied to make $E_1E_2\cdots E_n$ a standard monomial.

Let 
$$E=((u_h,k_h),\dots,(u_1,k_1)|(v_1,l_1),\dots,(v_h,l_h))\in \cE,$$
$$ 
J'=\bpartial^{n'}(u'_{h'},\dots,u'_1|v'_1,\dots,v'_{h'})\in \cJ.$$
 
\subsection{ $L(E,J')$ and $R(E,J')$}

 For $h'\leq h$,
let $\sigma_L$ and $\sigma_R$ be the permutations of $\{1,2,\dots,h'\}$  such that $u_{\sigma_L(i)}<u_{\sigma_L(i+1)}$ and $v_{\sigma_R(i)}<v_{\sigma_R(i+1)}$.
Let $L(E,J')$ and $R(E,J')$ be the smallest non-negative integers $i_0$ and $j_0$ such that 
$u'_{i}\geq u_{\sigma_L(i-i_0)}$, $i_0< i \leq h'$ and  $v'_{j}\geq v_{\sigma_L(j-j_0)}$, $j_0<j\leq h'$, respectively.
Let
\begin{equation} \label{def:Ehprime} E(h')=((u_{h'},k_{h'}),\dots,(u_1,k_1)|(v_1,l_1),\dots,(v_{h'},l_{h'})).\end{equation}
Then $L(E,J')=L(E(h'),J')$ and $R(E,J')=R(E(h'),J')$.

The following lemma is obvious.
\begin{lemma}\label{lemma:replace}
For $J''=\bpartial^k(u''_{h'},\dots,u''_1|v''_{1},\dots,v''_{h'})\in\cJ$, if there are at least $s$ elements in $\{u''_{h'},\dots, u''_1\} $ from the set  $\{u'_{h'},\dots, u'_1\} $, then 
$L(E,J'')\geq L(E,J')-h'+s$;  
   if there are at least $s$ elements in $\{v''_{h'},\dots, v''_1\} $ from the set  $\{v'_{h'},\dots, v'_1\} $, then 
$R(E,J'')\geq R(E,J')-h'+s$.

\end{lemma}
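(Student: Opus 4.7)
The plan is to recast $L(E,J')$ as a ``dominance length'' and then use a straightforward exchange argument. Set $a_j = u_{\sigma_L(j)}$, so $a_1<\cdots<a_{h'}$, and write $k(J') := h' - L(E,J')$. By definition $k(J')$ is the largest $k$ such that $u'_{h'-k+j}\geq a_j$ for $j=1,\ldots,k$. The first step is to verify the equivalent increasing-matching characterization: $k(J')$ equals the largest $k$ for which there exist indices $\beta_1<\cdots<\beta_k$ in $\{1,\ldots,h'\}$ with $u'_{\beta_j}\geq a_j$. One direction is immediate (take $\beta_j = L(E,J')+j$); for the converse, $\beta_j\leq h'-k+j$ (from $\beta_k\leq h'$ together with strict increase) combined with the sortedness of $u'$ gives $u'_{h'-k+j}\geq u'_{\beta_j}\geq a_j$.

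With this reformulation in hand, I would transfer a maximal matching for $J''$ back to one for $J'$. Let $k''=k(J'')$ and choose $j_1<\cdots<j_{k''}$ with $u''_{j_r}\geq a_r$. Let $T=\{u'_i\}\cap\{u''_i\}$, a set of size at least $s$, and set $d:=h'-s$. At most $d$ of the values $u''_{j_1},\ldots,u''_{j_{k''}}$ can lie outside $T$, so at least $m:=k''-d$ of them are elements of $T$; label the corresponding indices $r_1<\cdots<r_m$ and write $u''_{j_{r_t}}=u'_{\alpha_t}$. Because both $u'$ and $u''$ are strictly increasing, equal values at increasing $u''$-positions must sit at increasing $u'$-positions, so $\alpha_1<\cdots<\alpha_m$. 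Since $r_t\geq t$ and $a$ is increasing,
\[ u'_{\alpha_t} \;=\; u''_{j_{r_t}} \;\geq\; a_{r_t} \;\geq\; a_t, \]
which exhibits an increasing matching of length $m$ for $u'$. By the equivalent description, $k(J')\geq m = k(J'')-d$, and hence
\[ L(E,J'') \;=\; h'-k(J'') \;\geq\; h' - k(J') - d \;=\; L(E,J') - h' + s. \]
The bound for $R(E,J'')$ follows by the identical argument with $v$, $v'$, $v''$, and $\sigma_R$ replacing $u$, $u'$, $u''$, and $\sigma_L$.

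I do not expect a substantial obstacle. The only subtlety is the bookkeeping step showing that the restriction of a matching to $T$ preserves monotonicity of the $u'$-indices, but this is automatic from both sequences being strictly sorted. This matches the author's remark that the lemma is ``obvious''.
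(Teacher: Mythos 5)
The paper declares this lemma ``obvious'' and supplies no proof, so there is nothing to compare against; your argument correctly fills the gap. The reformulation of $h'-L(E,J')$ as the length of a longest ``increasing matching'' $\beta_1<\cdots<\beta_k$ with $u'_{\beta_j}\geq a_j$ is exactly the right way to make the statement transparent, and your exchange argument (restrict a maximal $J''$-matching to the shared values $T$ and reindex) is clean and sound; in particular the monotonicity of the $\alpha_t$'s does follow automatically from the strict increase of both $u'$ and $u''$, and $r_t\geq t$ gives the needed dominance.

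One negligible loose end: when $m:=k(J'')-d\leq 0$ the step ``label the corresponding indices $r_1<\cdots<r_m$'' has no content, but the desired inequality $k(J')\geq k(J'')-d$ then holds trivially since $k(J')\geq 0$ (the empty matching), so the final chain of inequalities still goes through. You could add a one-line remark to that effect; otherwise the proof is complete.
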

\subsection{A criterion for $J'$ to be greater than $E$}
We say
$J'$ is greater than $E$ if there is an element $E'\in\cE(J')$ with $E\leq E'$. Then $J'$  is greater than $E$  if and only if $J'$  is greater than $E(h')$. The following lemma is a criterion for $J'$ to be greater than $E$.
\begin{lemma} \label{lemma:critgreat}
		$J'$ is greater than $E$ if and only if 
	$wt(J')-wt(E(h'))\geq L(E,J')+R(E,J')$.
\end{lemma}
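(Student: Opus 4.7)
The plan is to prove the equivalence by unpacking the definition of ``$J'$ is greater than $E$'' into an explicit construction problem and matching the combinatorial parameters $L(E, J')$ and $R(E, J')$ to the weight budget on $E'\in\cE(J')$.

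First I would handle the implication assuming $E'\in\cE(J')$ with $E\leq E'$ exists. Writing the entries of $E'$ as $(\tilde u_i,\tilde k_i)$ and $(\tilde v_i,\tilde l_i)$ for $1\leq i\leq h'$, the multisets $\{\tilde u_i\}$ and $\{\tilde v_i\}$ equal $\{u'_1,\dots,u'_{h'}\}$ and $\{v'_1,\dots,v'_{h'}\}$, and $\sum\tilde k_i+\sum\tilde l_i=wt(J')$. The partial-order condition $(u_i,k_i)\leq(\tilde u_i,\tilde k_i)$ at each position $i$ splits into either a \emph{strict-$k$} position ($\tilde k_i>k_i$, contributing at least $1$ to the $k$-surplus) or an \emph{equality-$k$} position ($\tilde k_i=k_i$ together with $\tilde u_i\geq u_i$). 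The equality-$k$ positions define a partial matching between the $u$-values of $E(h')$ and the $u$'s of $J'$ under which one set pointwise dominates the other. The key combinatorial claim is that this matching forces the number of strict-$k$ positions to be at least $L(E,J')$: the definition of $L$ as the smallest shift $i_0$ enabling the inequalities $u'_i\geq u_{\sigma_L(i-i_0)}$ for $i_0<i\leq h'$ is exactly the minimum number of $u'$-values that must be ``skipped'' in a pointwise-dominating assignment. The analogous argument on the $v$-side gives at least $R(E,J')$ strict-$l$ positions, and combining these contributions with the weight-conservation equation produces the stated weight relation between $wt(E(h'))$, $wt(J')$, and $L(E,J')+R(E,J')$.

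For the converse direction, I would construct $E'$ explicitly from the weight hypothesis. Using the sorted permutations $\sigma_L$ and $\sigma_R$, assign $u'_j$'s and $v'_j$'s to the positions of $E(h')$ in a way consistent with the dominations guaranteed by the definitions of $L$ and $R$: at $L$ designated positions use strict $k$-inequality to absorb the ``excess'' large $u$'s of $E(h')$, and at the remaining $h'-L$ positions match sorted $u'$'s to sorted $u$'s so that $\tilde u_i\geq u_i$ holds with $\tilde k_i=k_i$. Perform the analogous construction on the $v$-side with shift $R$. Because the $u$- and $v$-components of $J'$ are independent choices (disjoint alphabets), the two assignments combine into a single $E'\in\cE(J')$ with $E(h')\leq E'$. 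Any slack between the weight hypothesis and the minimum weight produced by this construction is absorbed by increasing some $\tilde k_i$ or $\tilde l_i$ further, which preserves the partial-order relation.

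The main obstacle is the tight combinatorial identification of $L(E,J')$ with the minimum number of strict-$k$ positions. This essentially amounts to a Hall-type or greedy matching argument sweeping through the sorted $u$-values: one must verify that any assignment using fewer than $L$ strict-$k$ positions cannot achieve pointwise domination at the remaining equality positions, because the definition of $L$ already rules out such an assignment by minimality. A secondary technical point is reconciling the $u$- and $v$-side constructions into a single well-defined $E'$, which is handled by the independence noted above. Lemma \ref{lemma:replace} suggests that arguments of this flavor are deployed systematically in the surrounding development, which should streamline the bookkeeping in both directions.
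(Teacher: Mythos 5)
Your plan matches the paper's proof in both directions: the necessity argument counts the $k$- and $l$-surpluses against the positions where $u$- resp.\ $v$-domination fails and uses the same sorted-matching observation to bound those counts below by $L(E,J')$ and $R(E,J')$, while the sufficiency argument is exactly the paper's explicit construction of $E'\in\cE(J')$ obtained by shifting the sorted $u'$- and $v'$-values by $L$ and $R$, raising the weight by $1$ at the shifted positions and absorbing the remaining slack in a single coordinate. One caution on signs: both your sketch and the paper's own computation yield $wt(J')-wt(E(h'))\geq L(E,J')+R(E,J')$, so the difference as printed in the lemma statement is reversed (a typo in the paper, consistent with how the lemma is used later).
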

\begin{proof} 
	Let
	$i_0=L(E,J')$ and $j_0=R(E,J')$. Let $\sigma$ and $\sigma'$ be permutations of $\{1,2,\dots,h'\}$  such that $u_{\sigma_L(i)}<u_{\sigma_L(i+1)}$ and  $v_{\sigma_R(i)}<v_{\sigma_R(i+1)}$.

	If $wt(J')-wt(E(h'))\geq L(E,J')+R(E,J')$,	
	let $$\tilde u'_{\sigma(i)}= \left\{  \begin{array}{cc}
	u'_{i+i_0}, & \sigma(i)+i_0\leq h' \\
	u'_{i+i_0-h_b},&   i+i_0> h'
	\end{array}\right.,
	\quad  k'_{\sigma(i)}= \left\{  \begin{array}{cc}
	k_{\sigma(i)}, & i+i_0\leq h', i\neq h'\\
	k_{\sigma(i)}+1,&   i+i_0> h', i\neq h'
	\end{array}\right. ,
	$$
	$$\tilde v'_{\sigma'(j)}= \left\{ \begin{array}{cc}
	v'_{j+j_0}, & j+j_0\leq h' \\
	v'_{j+j_0-h_b},&   j+j_0> h'
	\end{array}\right.,
	\quad \quad l'_{\sigma(j)}= \left\{ \begin{array}{cc}
	l_{\sigma'(j)}, & j+j_0\leq h' \\
	l_{\sigma'(j)}+1,&   j+j_0> h'
	\end{array} \right.,
	$$
	$$k'_{\sigma(h')}=wt(J')-\sum_{i=1}^{h'-1}k'_i-\sum_{j=1}^{h'}l'_i.$$
	Then
	$$k'_{\sigma(h')}=wt(J')-wt(E(h'))-i_0-j_0+k_{\sigma(h')}+1-\delta_{i_0}^{0}\geq k_{\sigma(h')}+1-\delta_{i_0}^{0}.$$
	$$(\tilde u'_{\sigma(i)},k'_{\sigma(i)})\geq (u_{\sigma(i)},k_{\sigma(i)}),\quad (\tilde v'_{\sigma'(j)},l'_{\sigma'(j)})\geq (v_{\sigma'(j)},l_{\sigma'(i)}). $$
	So $$\tilde E'=((\tilde u'_{h'},k'_{h'}),\dots,(\tilde u'_{2},k'_2),(\tilde u'_{1},k'_1)|(\tilde v'_{1},l'_1),(\tilde v'_{2},l'_2),\dots,(\tilde v'_{h'},l'_{h'}))$$
	is an element in $\cE(J')$ with $\tilde E'\geq E$.
	
	On the other hand, suppose that $\tilde E'\in \cE(J')$ with $\tilde E'\geq E$.
	Assume $$\tilde E' =((\tilde u'_{h'},k'_{h'}),\dots,(\tilde u'_{2},k'_2),(\tilde u'_{1},k'_1)|(\tilde v'_{1},l'_1),(\tilde v'_{2},l'_2),\dots,(\tilde v'_{h'},l'_{h'})).$$
	We have
	$(\tilde u'_i,k'_i)\geq (u_i,k_i)$ i.e.
	$k'_i>k_i$ or $k'_i=k_i, \tilde u'_i\geq u_i$
	and
	$(\tilde v'_i,l'_i)\geq (v_i,v_i)$ for $1\leq i\leq h'$ i.e.
	$l'_i>l_i$ or $l'_i=l_i, \tilde v'_i\geq v_i$.
	So
	$$\sum_{i=1}^{h'}(k_i'-k_i)+\sharp\{\tilde u'_i\geq u_i, i|1\leq i\leq h'\}\geq h', $$
	$$\sum_{i=1}^{h'}(l_i'-l_i)+\sharp\{\tilde v'_i\geq v_i, i|1\leq i\leq h'\}\geq h'.$$
	Let $i'_0=h'-\sharp\{\tilde u'_i\geq u_i, i|1\leq i\leq h'\}$ and $j'_0=h'-\sharp\{\tilde v'_i\geq v_i, i|1\leq i\leq h'\}$. Then 
	$$i'_0+j'_0\leq \sum_{i=1}^{h'}(k_i'-k_i)+\sum_{i=1}^{h'}(l_i'-l_i)= wt(J')-wt(E(h')).$$
	Here $\tilde u_1',\dots,\tilde u'_{h'}$ is a permutation of $u'_1,\dots,u'_{h'}$ and  $\tilde v_1',\dots,\tilde v'_{h'}$ is a permutation of $v'_1,\dots,v'_{h'}$.
	By the definition of $i'_0$ and $j'_0$, it is easy to see that  $u'_{i}\geq u_{\sigma(i-i'_0)}$, $i'_0< i \leq h'$ and  $v'_{j}\geq v_{\sigma'(j-j'_0)}$, $j'_0<j\leq h'$. So $i'_0\geq L(E,J')$ and $j'_0\leq R(E,J')$. Thus
	$$wt(J')-wt(E(h'))\geq i_0'+j_0'\geq L(E,J')+R(E,J').$$
	
\end{proof}

\begin{cor} \label{cor:critgreat1}
	$J'$ is greater than $E$ if and only if $||E(h')||J'$ is standard.
\end{cor}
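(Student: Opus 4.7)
The plan is to prove both directions by unpacking Definition \ref{def:standard} in the two-factor case and then invoking Lemma \ref{lemma:critgreat} twice.

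First, I would identify the $\prec$-maximal element $E^{*}$ of $\cE(||E(h')||)$. By the description of canonical representatives noted just after Definition \ref{def:standard}, $E^{*}$ has the form
$$E^{*}=((u_{\sigma(h')},wt(E(h'))),(u_{\sigma(h'-1)},0),\dots,(u_{\sigma(1)},0)|(v_{\tau(1)},0),\dots,(v_{\tau(h')},0)),$$
where $\sigma,\tau$ are the permutations sorting $u_1,\dots,u_{h'}$ and $v_1,\dots,v_{h'}$ in increasing order. By Definition \ref{def:standard}, $||E(h')||\cdot J'$ is standard iff there is a pair $E_1 E_2\in\cS\cM(\cE)$ with $E_1\in\cE(||E(h')||)$ and $E_2\in\cE(J')$. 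Conditions (2) and (3) of that definition force $E_1=E^{*}$ and force $E_2$ to be the $\prec$-maximum of the finite set $\{E'\in\cE(J'):E^{*}\leq E'\}$. Hence standardness of $||E(h')||\cdot J'$ is equivalent to this set being nonempty, which is precisely the statement that $J'$ is greater than $E^{*}$.

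Next, I would compare the two criteria supplied by Lemma \ref{lemma:critgreat}, applied respectively to $E$ (with $sz(E)=h\geq h'$) and to $E^{*}$ (with $sz(E^{*})=h'$): $J'$ is greater than $E$ iff $wt(E(h'))-wt(J')\geq L(E,J')+R(E,J')$, while $J'$ is greater than $E^{*}$ iff $wt(E^{*})-wt(J')\geq L(E^{*},J')+R(E^{*},J')$. These inequalities coincide: $wt(E^{*})=wt(E(h'))$ by the construction of $E^{*}$, and since $L(\cdot,J')$ and $R(\cdot,J')$ depend only on the unordered $u$- and $v$-index sets of the first $h'$ positions---which are the same for $E$, $E(h')$, and $E^{*}$---we have $L(E,J')=L(E^{*},J')$ and $R(E,J')=R(E^{*},J')$. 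Combining, "$J'$ greater than $E$" is equivalent to "$J'$ greater than $E^{*}$", which by the previous step is equivalent to $||E(h')||\cdot J'$ being standard.

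The only nontrivial part is the bookkeeping identification of $E^{*}$ and the invariance of $L$, $R$, and $wt$ under replacing $E$ by $E^{*}$ (i.e., reordering the $u$- and $v$-labels in increasing order and concentrating the whole weight in the last slot). Once these observations are recorded, the corollary is a direct consequence of Lemma \ref{lemma:critgreat}; there is no further combinatorial or algebraic obstacle.
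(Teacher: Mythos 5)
Your proof is correct and follows the same route as the paper: unwind Definition~\ref{def:standard} for a two-factor product to reduce standardness of $||E(h')||J'$ to ``$J'$ is greater than the maximal element of $\cE(||E(h')||)$,'' then apply Lemma~\ref{lemma:critgreat} on both sides and observe that $L$, $R$, and the relevant weight are unchanged when $E$ is replaced by that maximal element. The paper's version is terser (it passes directly through $E(h')$ and leaves the observation $L(E,J')=L(E(h'),J')$ implicit), but the argument is the same.
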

\begin{proof}
	By Lemma \ref{lemma:critgreat}, $J'$ is greater than $E$ if and only if $wt(J')-wt(E(h'))\geq L(E,J')+R(E,J')$ and $||E(h')||J'$ is standard if and only if $wt(J')-wt(E(h'))\geq L(E(h'),J')+R(E(h'),J')=L(E,J')+R(E,J')$.
\end{proof}

\subsection{The property \lq\lq largest"}	
Let
\begin{equation*}
\cW_s(E,J')=\{J=\bpartial^k(u'_{i_s},\dots , u'_{i_1}|v'_{j_1},\dots , v'_{j_s})| \ 1\leq i_l,j_l\leq h',\ J \text{ is greater than } E\}.
\end{equation*}

\begin{lemma}\label{cor:compare}
If $E'$ is the largest element in $\cE(J')$ such that $E\leq E'$, then for $s<h'$,
 $||E'(s)||$ is the smallest element in $\cW_s(E,J')$.
	\end{lemma}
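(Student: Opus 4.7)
The plan is to verify the two defining properties of ``smallest element'': that $||E'(s)||$ belongs to $\cW_s(E,J')$, and that $||E'(s)|| \preceq J$ for every $J \in \cW_s(E,J')$.

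For membership, note that $E \leq E'$ means precisely that $(u_i,k_i)\leq(u'_i,k'_i)$ and $(v_i,l_i)\leq(v'_i,l'_i)$ for $1\leq i\leq h'$, so truncating to the first $s$ pairs gives $E(s)\leq E'(s)$. Thus $E'(s)\in\cE(||E'(s)||)$ is a witness that $||E'(s)||$ is greater than $E$, placing $||E'(s)||$ in $\cW_s(E,J')$.

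For the minimality, fix $J\in\cW_s(E,J')$. Since both $J$ and $||E'(s)||$ have size $s$, the $\prec$-comparison reduces to weight first and then lex on the word. Pick any witness $\tilde E\in\cE(J)$ with $E(s)\leq\tilde E$. The plan is to \emph{extend} $\tilde E$ to some $\hat E\in\cE(J')$ with $E\leq\hat E$ and $\hat E(s)=\tilde E$, by distributing the complementary $(h'-s)$ values $u'_i$, $v'_i$ (those not already used in $J$) together with the residual weight $wt(J')-wt(J)$ into positions $s+1,\ldots,h'$ via the explicit construction from the proof of Lemma \ref{lemma:critgreat} applied to the tail data $(E(h')\setminus E(s),\, \text{complement of } J \text{ in } J')$. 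Once $\hat E$ is produced, $\prec$-maximality of $E'$ yields $\hat E\preceq E'$. Since both have total weight $wt(J')$, if $wt(J)=wt(\hat E(s))$ were strictly less than $wt(E'(s))$ then the tail of $\hat E$ would carry a strict weight surplus, and running the greedy lex-maximization of Lemma \ref{lemma:critgreat} on these $h'-s$ tail positions would produce an element of $\cE(J')$ still bounding $E$ from above but whose word is lex-greater than that of $E'$, contradicting maximality. Hence $wt(J)\geq wt(||E'(s)||)$. In the equal-weight case, an analogous redistribution argument applied to the word prefix at positions $s+1,\ldots,h'$ forces $||E'(s)||\preceq J$ in lex as well.

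The main obstacle is making the ``extend and redistribute'' step precise. Two things need to be checked: (i) the numerical criterion of Lemma \ref{lemma:critgreat} is actually satisfied for the tail, so that the extension $\hat E$ exists, using the hypotheses that $J'$ is greater than $E$ (so $E'$ exists) and $J$ is greater than $E(s)$; and (ii) any strict surplus of weight or lex in the tail of $\hat E$ really can be rearranged into a $\prec$-strictly-larger word at positions $s+1,\ldots,h'$ without breaking the bound $E\leq$ (the new element). Both steps hinge on a careful bookkeeping of how the quantities $L(E,\cdot)$ and $R(E,\cdot)$ decompose between the first $s$ positions and the tail, combined with the explicit greedy recipe for the canonical $\tilde E'$ appearing in the proof of Lemma \ref{lemma:critgreat}.
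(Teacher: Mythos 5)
There is a genuine gap in the proposed extension step: given $J\in\cW_s(E,J')$ with witness $\tilde E\in\cE(J)$, $E(s)\leq\tilde E$, the claimed extension $\hat E\in\cE(J')$ with $E\leq\hat E$ and $\hat E(s)=\tilde E$ need not exist, and neither of the hypotheses you cite (that $J'$ is greater than $E$, and that $J$ is greater than $E$) forces it.

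Concrete counterexample. Take $h=h'=2$, $s=1$, $E=((2,1),(1,0)\mid(1,0),(2,1))$ and $J'=\bpartial^2(2,1\mid 1,2)$. A short check shows $E'=E$ is the unique (hence largest) element of $\cE(J')$ with $E\leq E'$, so $\|E'(1)\|=\bpartial^0(1\mid 1)$. The element $J=\bpartial^0(2\mid 2)$ is in $\cW_1(E,J')$ with unique witness $\tilde E=((2,0)\mid(2,0))\in\cE(J)$, and indeed $E(1)=((1,0)\mid(1,0))\leq\tilde E$. Any extension forced to be $\hat E=((1,\hat k_2),(2,0)\mid(2,0),(1,\hat l_2))\in\cE(J')$ has weight budget $\hat k_2+\hat l_2=2$ at position $2$, whereas $E\leq\hat E$ demands $(1,\hat k_2)\geq(2,1)$ and $(1,\hat l_2)\geq(2,1)$, i.e.\ $\hat k_2\geq 2$ and $\hat l_2\geq 2$, a total of at least $4$. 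So no extension exists, even though the conclusion of the lemma is still true ($\bpartial^0(1\mid 1)\prec\bpartial^0(2\mid 2)$). A second problem is in the final inference: the lex order on $\cE$ for elements of size $h'$ compares positions $h',h'-1,\dots$ first, so $\hat E\preceq E'$ gives information about the tail, not directly about the truncations $\hat E(s)$ and $E'(s)$, and the ``greedy redistribution'' step is never reduced to anything that can be checked.

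The paper's proof avoids crossing the size-$s$ boundary entirely. It takes $J_s$ to be the smallest element of $\cW_s(E,J')$, sets $E_s$ to be the largest element of $\cE(J_s)$ with $E(s)\leq E_s$, and proves $E_s=E'(s)$ by an entry-by-entry exchange argument: at the first position $l$ where the two sequences disagree, either a transposition on the $E'$ side produces a $\prec$-larger element of $\cE(J')$ still dominating $E$ (contradicting maximality of $E'$), or a replacement/transposition on the $J_s$ side produces a $\prec$-smaller element of $\cW_s$ or a $\prec$-larger element of $\cE(J_s)$ dominating $E(s)$ (contradicting minimality of $J_s$ or maximality of $E_s$). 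That entirely local swap argument is what should replace the global ``extend and redistribute'' step in your sketch.
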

\begin{proof}
	Assume $$E'=(( u'_{h'},k_{h'}),\dots,(u'_{2},k_2),( u'_{1}, k_1)|(v'_1,l'_1),(v'_2,l'_2),\dots,(v'_h,l'_h)).$$
	For $s<h'$, let $J_s$ be the smallest element in $\cW_s(E,J')$. Let
	$$E_s=(( u'_{i_s},\tilde k_{s}),\dots,(u'_{i_2},\tilde k_2),( u'_{i_1},\tilde k_1)|( v'_{j_1},\tilde l_{1}),(v'_{j_2},\tilde l_2),\dots,( v'_{j_s},\tilde l_s))$$
	 be the largest element in $\cE(J_s)$ such that $E(s)\leq E _s$. 
	 
	 Assume $l$ is the largest number such that $(u'_j, k'_j)=(u'_{i_j},\tilde k_j)$ for $j<l\leq s+1$. 
	\begin{enumerate}
		\item 
	 If $l\leq s$, then $i_l\geq l$ and $(u'_{i_l},\tilde k_l)\neq (u'_l,k'_l)$. If $i_l=l$, by the maximality of $E'$ and the minimality of $J_s$, we must have $(u'_{i_l},\tilde k_l)= (u'_l,k'_l)$. This is a contradiction, so $i_l>l$.
	 
	 If $(u'_{i_l},\tilde k_l)<(u'_l,k'_l)$, then
	$(u'_{l}, k'_l+k'_{i_l}-\tilde k_{l})> (u'_{i_l}, k'_{i_l})$.
	Let $E''$ be the element in $\cE(J')$ obtained by replacing $( u'_{l},k'_l)$ and $(u'_{i_l},k'_{i_l})$ in $E'$
	by $(u'_{i_l},\tilde k_{l})$ and   $(u'_{l}, k'_l+k'_{i_l}-\tilde k_{l})$, respectively. We have $E'\prec E''$ and $E(h')\leq E''$.
	But $E'\neq E''$ is the largest element in $\cE(||E'||)$ such that $E\leq E'$, which is a contradiction.

	Assume $(u'_{i_l},\tilde k_l)>(u'_l,k'_l)$. If $l\notin\{i_1,\dots,i_s\}$, replacing $(u'_{i_l},\tilde k_l)$ in $E_s$ by $(u'_l,k'_l)$, we get $E'_s$ with $E(s)\leq E'_s$ and $||E'_s||\prec J_s$. This is impossible since $J_s\neq ||E'_s||$
	is the smallest element in $\cW_s(E,E') $. If $l=i_j \in\{i_1,\dots,i_s\}$,
	$(u'_{i_l}, \tilde k_l+\tilde k_{j}- k'_{l})> (u'_{i_j}, \tilde k_{j})$.
	Let $E'_s$ be the element in $\cE(J_s)$ obtained by replacing $(u'_{i_l},\tilde k_l)$ and $(u'_{i_j}, \tilde k_{j})$ in $E'_s$
	by $(u'_l,k'_l)$ and   $(u'_{i_l}, \tilde k_l+\tilde k_{j}- k'_{l})$, respectively. We have $E_s\prec E'_s$ and $E\leq E'_s$.
	But $E'_s\neq E_s$ is the largest element in $\cE(J_s)$ such that $E\leq E'_s$, a
	contradiction.
	\item
	 If $l=s+1$, then the left part of $E_s$ is equal to the left part of $E'(s)$.  Assume $m$ is the largest number such that $(v'_i, l'_i)=(v'_{j_i},\tilde k_i)$ for $i<m\leq s+1$. By the same argument of (1), we can show that $m=s+1$. 
	 	\end{enumerate}
 	So $E_s=E'(s)$ and $||E'(s)||$ is the smallest element in $\cW_s(E,J')$.
\end{proof}
\begin{cor}\label{cor:lrnumber1}
If $E'$ is the largest element in $\cE(||E'||)$ such that $E\leq E'$, then for $s<h'$,
$$L(E,||E'(s)||)+R(E,||E'(s)||)=wt(E'(s))-wt(E(s)).$$	
\end{cor}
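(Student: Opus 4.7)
The plan is to combine Lemma \ref{cor:compare} (which identifies $||E'(s)||$ as the $\prec$-smallest element of $\cW_s(E, ||E'||)$) with Lemma \ref{lemma:critgreat} (which characterizes ``greater than $E$'' via a weight inequality). One direction of the desired equality is essentially free, and the other is obtained by contradiction using a weight-reduction construction.

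First, I would observe that $E\leq E'$ implies $E(s)\leq E'(s)$, and since $E'(s)\in \cE(||E'(s)||)$, the sequence $||E'(s)||$ is greater than $E$. Applying Lemma \ref{lemma:critgreat} with $J' = ||E'(s)||$ then yields
$$wt(E'(s)) - wt(E(s)) \;\geq\; L(E, ||E'(s)||) + R(E, ||E'(s)||),$$
where I use that $wt(||E'(s)||) = wt(E'(s))$.

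For the reverse inequality, I would argue by contradiction. Suppose the inequality is strict. Write $||E'(s)|| = \bpartial^N(\tilde u_s,\dots,\tilde u_1\,|\,\tilde v_1,\dots,\tilde v_s)$ with $N = wt(E'(s))$, and set $L := L(E,||E'(s)||)$, $R := R(E,||E'(s)||)$. Form the new sequence
$$J_s \;:=\; \bpartial^{n}(\tilde u_s,\dots,\tilde u_1\,|\,\tilde v_1,\dots,\tilde v_s), \qquad n := wt(E(s)) + L + R < N.$$
Since $L$ and $R$ depend only on the $u$- and $v$-entries (not the weight), $L(E, J_s)=L$ and $R(E, J_s)=R$, so Lemma \ref{lemma:critgreat} certifies that $J_s$ is greater than $E$. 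The indices $\tilde u_i,\tilde v_j$ are a subset of those of $||E'||$, so $J_s\in \cW_s(E, ||E'||)$. But $wt(J_s) = n < N = wt(||E'(s)||)$, so $J_s \prec ||E'(s)||$ in the $\prec$-ordering on $\cJ$ (since weight is the first tie-breaker after size). This contradicts Lemma \ref{cor:compare}, which asserts that $||E'(s)||$ is the $\prec$-smallest element of $\cW_s(E, ||E'||)$.

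The argument is essentially a one-step minimality reduction, so I do not expect a serious obstacle; the only point requiring care is confirming that $J_s$ really does lie in $\cW_s(E,||E'||)$ and is strictly $\prec$-smaller than $||E'(s)||$, both of which are immediate from the definitions of $\cW_s$ and of the ordering on $\cJ$.
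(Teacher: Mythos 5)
Your proposal is correct and follows the same route as the paper: use the fact that $E\leq E'$ implies $||E'(s)||$ is greater than $E$ to get the inequality $wt(E'(s))-wt(E(s))\geq L(E,||E'(s)||)+R(E,||E'(s)||)$ from Lemma \ref{lemma:critgreat}, and then invoke the $\prec$-minimality of $||E'(s)||$ in $\cW_s(E,||E'||)$ from Lemma \ref{cor:compare} to rule out strict inequality via the weight-lowering construction. The paper's proof is just a terse two-line citation of the same two lemmas; your write-up fills in the minimality argument that the paper leaves implicit, and correctly notes that $L$ and $R$ depend only on the row/column indices so that lowering the weight preserves them.
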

\begin{proof}
Since $E\leq E'$, $E\leq E'(s)$. 
By Lemma \ref{cor:compare}, $||E'(s)||$ is the smallest element in $\cW_s(E,J')$. By Lemma \ref{lemma:critgreat}, $L(E,||E'(s)||)+R(E,||E'(s)||)=wt(E'(s))-wt(E(s)).$
\end{proof}
\begin{cor}\label{cor:lrnumber2}
If $E'$ is the largest element in $\cE(||E'||)$ such that $E\leq E'$, then for $s<h'$ and any	$J\in \cW_s(E,E')$,
	$$L(E,||E'(s)||)\leq L(E,J), \quad R(E,||E'(s)||)\leq R(E,J).$$
\end{cor}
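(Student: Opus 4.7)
The plan is to exploit the fact that $L(E,\,\cdot)$ and $R(E,\,\cdot)$ decouple into independent functions of the $u$-side and $v$-side respectively, and then extract the two inequalities from the weight identity of Corollary \ref{cor:lrnumber1} together with the fact that $||E'(s)||$ realizes the minimum weight in $\cW_s(E,||E'||)$ (Lemma \ref{cor:compare}).

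First, observe from the definitions that $L(E, J)$ depends only on the sorted $u$-entries of $J$ (and on the first $sz(J)$ sorted $u$-entries of $E$), while $R(E, J)$ depends only on the sorted $v$-entries. So for each size-$s$ subset $B \subseteq \{u'_1,\dots,u'_{h'}\}$ and each size-$s$ subset $D \subseteq \{v'_1,\dots,v'_{h'}\}$, the quantities $L(E, B)$ and $R(E, D)$ are well defined. By Lemma \ref{lemma:critgreat}, a size-$s$ minor with $u$-set $B$, $v$-set $D$, and weight $k$ lies in $\cW_s(E, ||E'||)$ precisely when $k \geq wt(E(s)) + L(E, B) + R(E, D)$. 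Since $B$ and $D$ may be chosen independently, the minimum weight attained on $\cW_s(E, ||E'||)$ equals
$$wt(E(s)) + \min_B L(E, B) + \min_D R(E, D).$$

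By Lemma \ref{cor:compare}, $||E'(s)||$ is the $\prec$-smallest element of $\cW_s(E, ||E'||)$; since every member of $\cW_s$ has size $s$ and $\prec$ on size-$s$ minors reduces to ``smaller weight first, then lex,'' $||E'(s)||$ realizes this minimum weight. Combining with Corollary \ref{cor:lrnumber1} yields
$$L(E, ||E'(s)||) + R(E, ||E'(s)||) = wt(||E'(s)||) - wt(E(s)) = \min_B L(E, B) + \min_D R(E, D).$$
Each summand on the left is bounded below by the corresponding minimum on the right, so both inequalities must be equalities, giving $L(E, ||E'(s)||) = \min_B L(E, B)$ and $R(E, ||E'(s)||) = \min_D R(E, D)$.

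Finally, for any $J \in \cW_s(E, ||E'||)$ with $u$-set $B_J$ and $v$-set $D_J$,
$$L(E, J) = L(E, B_J) \geq \min_B L(E, B) = L(E, ||E'(s)||),$$
and similarly $R(E, J) \geq R(E, ||E'(s)||)$, establishing both inequalities. The one step that really needs to be gotten right is the decoupling observation that $L$ and $R$ depend only on their respective sides, so that the minimum over $\cW_s$ factors into independent minimizations; once this is noted, the remainder of the argument is formal.
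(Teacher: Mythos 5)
Your proof is correct and uses essentially the same ingredients and key idea as the paper's. The paper argues by contradiction: assuming $L(E,J)<L(E,\lVert E'(s)\rVert)$, it forms a ``mixed'' minor $J''$ with the $u$-side of $J$ and the $v$-side of $\lVert E'(s)\rVert$ at reduced weight, shows $J''\in\cW_s$ via Lemma~\ref{lemma:critgreat}, and contradicts the $\prec$-minimality of $\lVert E'(s)\rVert$ from Lemma~\ref{cor:compare}; your decoupling/minimization framing is just a repackaging of that same mixed-minor construction.
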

\begin{proof}
 Assume $$J=\bpartial^k(u_s,\dots, u_1|v_1 ,\dots, v_s), \quad ||E'(s)||=\bpartial^l(u'_s,\dots , u'_1|v'_1, \dots , v'_s).$$
  If $m=L(E,J)-L(E,||E'(s)||)<0$, let $J''= \bpartial^{l+m}(u_s, \dots , u_1|v'_1, \dots, v'_s)$. By Lemma \ref{lemma:critgreat},  $J''\in W_s(E,J')$. By Lemma \ref{cor:compare}, $||E'(s)||$ is the smallest element in $\cW_s(E,||E'||)$. But $wt(||E'(s)||)>wt(J'')$, a contradiction. Similarly, we can show  $R(E,||E'(s)||)\leq R(E,J)$. \end{proof}

\begin{lemma}\label{lem:lrnumber}
	Let $E_i=((u^i_{h_i},k^i_{h_i}),\dots,(u^i_1,k^i_1)|(v^i_1,l^i_1),\dots,(v^i_{h_i},l^i_{h_i}))$, $i=a,b$. Suppose that $E_b\leq E_a$ and that $E_a$ is the largest element in $\cE(||E_a||)$ such that $E_b\leq E_a$. Let $1\leq h<h_a$ and $\sigma_i$, $\sigma'_i$ be permutations of $\{1,\dots, h\}$, such that $u^i_{\sigma_i(1)}<u^i_{\sigma_i(2)}<\cdots<u^i_{\sigma_i(h)}$ and $v^i_{\sigma'_i(1)}<v^i_{\sigma'_i(2)}<\cdots<v^i_{\sigma'_i(h)}$. Let $v_1',\dots,v'_{h_a}$ be a permutation of $v^a_1,\dots,v^a_{h_a}$ such that $v_1'<v_2'<\cdots<v'_{h_a}$.  Let $u_1',\dots,u'_{h_a}$ be a permutation of $u^a_1,\dots,u^a_{h_a}$ such that $u_1'<u_2'<\cdots<u'_{h_2}$.
	\begin{enumerate}
		\item \label{item:1}
		Assume $u'_{i_2}=u^a_{\sigma(i_1)}$ with $i_2>i_1$, then for any
		$$K=\bpartial^k(u_h,\dots, u_{s+1},u'_{t_s},\dots, u'_{t_1}|v_{1},\dots,v_h),$$
		with $t_1<t_2<\cdots <t_s<i_2$, $L(E_b,K)>L(E_b,||E_a(h)||)+s-i_1$. 
		\item \label{item:2}
		Assume $v'_{j_2}=v^a_{\sigma(j_1)}$ with $j_2>j_1$, then for any
		$$K=\bpartial^k(u_h,\dots,u_1|v'_{t_1},\dots,v'_{t_s},v_{s+1},\dots,v_h),$$
		with  $t_1<t_2<\cdots <t_s<j_2$, $R(E_b,K)>R(E_b,||E_a(h)||)+s-j_1$. 
	\end{enumerate}
\end{lemma}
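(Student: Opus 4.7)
I prove part (1); part (2) follows by the symmetric argument, swapping the roles of $u$'s and $v$'s (and of $L$ and $R$). Write $L_a := L(E_b, ||E_a(h)||)$ and $N := s - i_1 + 1$, so that the target inequality is $L(E_b, K) \geq L_a + N$. The strategy is proof by contradiction, using the minimality of $||E_a(h)||$ in $\cW_h(E_b, ||E_a||)$ under $\prec$ established in Lemma~\ref{cor:compare}.

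\emph{Set-theoretic content of the hypothesis.} The relation $u'_{i_2} = u^a_{\sigma_a(i_1)}$ with $i_2 > i_1$ says that among the sorted initial segment $u'_1 < \cdots < u'_{i_2 - 1}$, exactly $i_1 - 1$ entries come from the first-$h$ set $\{u^a_1, \ldots, u^a_h\}$ (namely $u^a_{\sigma_a(1)}, \ldots, u^a_{\sigma_a(i_1 - 1)}$), while the remaining $i_2 - i_1$ come from the tail set $\{u^a_{h+1}, \ldots, u^a_{h_a}\}$. Combined with $t_1 < \cdots < t_s < i_2$, this forces at most $i_1 - 1$ of the $u'_{t_j}$'s to lie in the first-$h$ set, hence at least $N$ of them lie in the tail set; call these $N$ entries the \emph{intruders}.

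\emph{The contradiction.} Assume for contradiction that $L(E_b, K) \leq L_a + N - 1$. I construct a minor $J$ of size $h$ lying in $\cW_h(E_b, ||E_a||)$ but satisfying $J \prec ||E_a(h)||$, contradicting Lemma~\ref{cor:compare}. The construction: $J$ has the same $v$-entries as $||E_a(h)||$ (so $R(E_b, J) = R(E_b, ||E_a(h)||)$) and a $u$-set obtained from $\{u^a_1, \ldots, u^a_h\}$ by substituting the $N$ intruders for a carefully chosen collection of $N$ first-$h$ entries; the weight of $J$ is then picked just small enough to place $J \prec ||E_a(h)||$ in the ordering on $\cJ$. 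To verify $J \in \cW_h(E_b, ||E_a||)$ I invoke Lemma~\ref{lemma:critgreat}, whose defining inequality $wt(E_b(h)) - wt(J) \geq L(E_b, J) + R(E_b, J)$ reduces to an upper bound on $L(E_b, J)$; this bound is obtained by transferring the hypothesized upper bound on $L(E_b, K)$, using that the two sorted $u$-lists share the $N$ intruders at the bottom and differ only in the larger entries (the $u_{s+1}, \ldots, u_h$ of $K$ being replaced by the retained first-$h$ entries of $E_a$ in $J$).

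The main obstacle is the transfer step: specifying the precise choice of $N$ first-$h$ entries to be substituted out, keeping careful track of how the sorted position of each intruder shifts between the $K$-list and the $J$-list, and fine-tuning the weight of $J$ so that the $\prec$-comparison with $||E_a(h)||$ is strict exactly when the $L$-bound on $K$ is tight. The underlying tension — admitting small intruders into the sorted list while preserving the dominance inequalities over $E_b$ — is what makes the strict inequality in the lemma possible, and is precisely where the maximality assumption on $E_a$ is consumed via Lemma~\ref{cor:compare}.
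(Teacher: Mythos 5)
Your high-level plan is aligned with the paper's: argue by contradiction, use the minimality of $||E_a(h)||$ in $\cW_h(E_b,||E_a||)$ from Lemma~\ref{cor:compare}, and test membership via Lemma~\ref{lemma:critgreat}. Your ``intruder'' observation also correctly decodes the hypothesis $u'_{i_2}=u^a_{\sigma_a(i_1)}$, $i_2>i_1$. But the construction you sketch has several genuine gaps that I do not see how to repair along the lines you indicate.

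First, the paper's argument (given for part (2)) begins with a preliminary contradiction step establishing $j_1 > j'_0:=R(E_b,||E_a(h)||)$ (equivalently $i_1>L(E_b,||E_a(h)||)$ in part (1)), obtained by replacing $v^a_{\sigma_a(j_1)}$ with a smaller tail entry and invoking Lemma~\ref{cor:compare}. This fact is used essentially in the subsequent construction and is absent from your plan. Second, your plan gives no information in the case $s<i_1$, where $N=s-i_1+1\leq 0$ and you are not guaranteed a single intruder, yet the claimed inequality is still nontrivial (e.g.\ $L(E_b,K)\geq L_a$ when $s=i_1-1$). The paper handles that case separately by enlarging the set $\{t_1,\dots,t_s\}$ to a set of size $j_1$ and applying Lemma~\ref{lemma:replace}. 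Third, the actual substitution the paper performs is not ``$N$ intruders for $N$ first-$h$ entries''; it replaces the first $j_1$ slots of $||E_a(h)||$'s $v$-list by the top $j_1$ values $v'_{t_{s-j_1+1}},\dots,v'_{t_s}$, and it keeps the weight fixed at $wt(E_a(h))$. Lowering the weight, as you suggest, pushes the Lemma~\ref{lemma:critgreat} criterion in the wrong direction (a smaller $wt(J)$ makes dominance over $E_b$ harder, not easier), so the precedence $J\prec ||E_a(h)||$ must come from the lexicographic comparison of the $v$- (resp.\ $u$-) word at fixed weight, which the paper's construction delivers because $v'_{t_s}<v'_{j_2}=v^a_{\sigma_a(j_1)}$. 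Finally, the ``transfer step'' you flag as the main obstacle is indeed the crux, and the route you suggest (deducing an upper bound on $L(E_b,J)$ from the assumed upper bound on $L(E_b,K)$ via shared entries) is not how it goes: Lemma~\ref{lemma:replace} naturally yields \emph{lower} bounds after discarding shared entries, and the complementary upper bound it gives is too weak. The paper instead feeds the contradiction hypothesis $j_0\leq j'_0+s-j_1$ directly into the chain $v'_{t_j}\geq v^b_{\sigma_b(j-j_0)}\geq v^b_{\sigma_b(j-j'_0-s+j_1)}$, which is what produces $R(E_b,J')\leq j'_0$ for the modified minor $J'$. As written, your proposal is a plausible outline but not a proof; the steps above are the content you would need to supply.
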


\begin{proof}
	We prove (\ref{item:2}). The proof for (\ref{item:1}) is similar.\\
	 Let $n=wt(E_a(h))$.\\
	Let $j_0=R(E_b,K)$, then
	$v'_{t_j}\geq v^{b}_{\sigma_{b}(j-j_0)}$, for $s\geq j>j_0$.\\
	 Let $j'_0= R(E_b, ||E_a(h)||)$, then
	$v^a_{\sigma_a(j)}\geq v^{b}_{\sigma_{b}(j-j'_0)}$, for $h\geq j>j'_0$.\\
	We have $j_1> j'_0$.
	Otherwise, $j_1\leq j'_0$. Replacing $v^a_{\sigma_a(j_1)}$ in $||E^a(h)||$ by some $v^a_j<v^a_{\sigma_a(j_1)}$ with $j>h$, (such $v^a_j$ exists since $j_2>j_1$), we get  $$J=\bpartial^{n}(u_{\sigma_{a}(h)}^{a},\dots, u^{a}_{\sigma_{a}(1)}|v_{\sigma'_{a}(1)}^{a},\dots, v^a_j,\dots ,v^{a}_{\sigma'_{a}(h)})
	$$ with
	$R(E_b,J)\leq j'_0.$
	By Lemma \ref{lemma:critgreat}, $J$ is greater than $E_b$.
	This is impossible by Lemma \ref{cor:compare} since 
    $J\prec ||E_a(h)||$ and 
   $E_a$ is the largest element in $\cE(||E_a||)$ such that $E_b\leq E_a$.

	If 
	$s\geq j_1$, let $$J'= \bpartial^{n}(u_{\sigma_{a}(h)}^{a}, \dots , u^{a}_{\sigma_{a}(1)}|v'_{t_{s-j_1+1}}, \dots , v'_{t_s} v_{\sigma_{a}(j_1+1)}^{a}, \dots , v_{\sigma_{a}(h)}^{a} ).$$
 If $R(E_b,K)\leq R(E_b,||E_a||)+s-j_1$,
$$v'_{t_j}\geq v^{b}_{\sigma_{b}(j-j_0)}\geq v^{b}_{\sigma_{b}(j-j'_0-s+j_1)}, \,\,\text{for }j\leq s.$$	
	 We have
	$R(E_b,J')\leq j'_0.$ By Lemma \ref{lemma:critgreat}, $J'$ is greater than $E_b$. By Lemma \ref{cor:compare}, this is impossible since 
	$J'\prec ||E_a(h)||$ and $E_a$ is the largest element in $\cE(||E_a||)$ such that $E_b\leq E_a$.
	So when $s\geq j_1$, $$R(E_b,K)\geq R(E_b,||E_a||)+s-j_1.$$

	 If $s<j_1$, let $t'_1<t'_2<\cdots<t'_{j_1}<j_2$ with $\{t_1,\dots, t_s\}\subset \{t'_1,\dots, t'_{j_1}\}$. Let $$K'=\bpartial^k(u_h,\dots,u_1|v'_{t'_1},\dots,v'_{t'_{j_1}},v_{j_1+1},\dots,v_h).$$
	By Lemma \ref{lemma:replace}, we have $$R(E_b,K)\geq E(E_b,K')+s-j_1
	\geq  R(E_b,||E_a||)+s-j_1.$$
	
	So for any $s>0$, we have $R(E_b,K)> R(E_b,||E_a||)+s-j_1$.
\end{proof}
The following lemmas are obvious.
\begin{lemma}\label{lemma:order1}If $J_1\prec J_2\prec \cdots \prec J_n$, and $\sigma$ is a permutation of $\{1,\cdots ,n\}$, then
	$$J_1J_2\cdots J_n\prec J_{\sigma(1)}\cdots J_{\sigma(n)}.$$
\end{lemma}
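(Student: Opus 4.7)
The plan is to exploit the lexicographic nature of the order on $\cM(\cJ)$ and reduce to the first position where the two products differ. I assume $\sigma$ is not the identity, since otherwise the two words coincide; the statement as written then says the identity word is strictly smaller than every rearrangement of a strictly increasing sequence.

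First I would let $i_0$ be the smallest index in $\{1,\dots,n\}$ with $\sigma(i_0)\neq i_0$. For every $i<i_0$ we have $\sigma(i)=i$, so $J_i = J_{\sigma(i)}$, which matches the lexicographic comparison up to position $i_0-1$. Because $\sigma$ is a permutation of $\{1,\dots,n\}$ that fixes $1,\dots,i_0-1$, its value $\sigma(i_0)$ must lie in $\{i_0,i_0+1,\dots,n\}$; the condition $\sigma(i_0)\neq i_0$ then forces $\sigma(i_0)>i_0$.

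By hypothesis $J_1\prec J_2\prec\cdots\prec J_n$, hence $J_{i_0}\prec J_{\sigma(i_0)}$. Together with the agreement at positions $1,\dots,i_0-1$, the definition of the lexicographic order on $\cM(\cJ)$ recalled in Section~2.3 gives
$$J_1J_2\cdots J_n\prec J_{\sigma(1)}J_{\sigma(2)}\cdots J_{\sigma(n)},$$
as desired.

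There is essentially no obstacle here; the only care needed is to argue that $\sigma(i_0)>i_0$ rather than merely $\sigma(i_0)\neq i_0$, which is where the permutation hypothesis and the minimality of $i_0$ are used in tandem. Everything else is a direct unpacking of the definitions.
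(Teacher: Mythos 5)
Your proof is correct. The paper itself offers no argument and simply declares this lemma (together with Lemma~\ref{lemma:order2}) ``obvious,'' so there is no paper proof to compare against; your explicit argument---locating the first index $i_0$ where $\sigma$ deviates from the identity, observing that $\sigma$ then permutes $\{i_0,\dots,n\}$ so $\sigma(i_0)>i_0$, and concluding $J_{i_0}\prec J_{\sigma(i_0)}$ by the strict chain hypothesis---is exactly the natural unpacking of the definitions and fills the gap cleanly. Your observation that the statement requires $\sigma\neq\mathrm{id}$ (else the two words coincide and $\prec$ would have to be non-strict) is a fair catch about the lemma's phrasing, and your handling of it is appropriate.
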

\begin{lemma} \label{lemma:order2}If $K_1K_2\cdots K_k \prec J_1\cdots J_l$, then
	$$K_1K_2\cdots K_{s-1}JK_s\cdots K_k \prec J_1\cdots J_{s-1}JJ_s\cdots J_l.  $$
\end{lemma}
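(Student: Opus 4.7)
The plan is to unpack the lexicographic definition of $\prec$ on $\cM(\cJ)$ directly. By that definition, the hypothesis $K_1K_2\cdots K_k\prec J_1\cdots J_l$ means exactly one of the following holds: (i) $k<l$ and $K_i=J_i$ for all $1\leq i\leq k$, or (ii) there is a smallest index $i_0\leq\min(k,l)$ with $K_i=J_i$ for all $i<i_0$ and $K_{i_0}\prec J_{i_0}$. I would treat these cases separately, further splitting case (ii) according to whether $i_0<s$ or $i_0\geq s$.

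In case (ii) with $i_0<s$, inserting $J$ at position $s$ in each word leaves the initial segments through position $i_0$ untouched, so the witness $K_{i_0}\prec J_{i_0}$ is still in place at position $i_0$ and the same clause of the definition gives the desired inequality. In case (ii) with $i_0\geq s$, the new positions $1,\ldots,s-1$ still agree (since $K_i=J_i$ for $i<s\leq i_0$), both words have $J$ in position $s$, and for $s\leq i<i_0$ positions $i+1$ again agree; finally position $i_0+1$ of the inserted word on the left is $K_{i_0}$ while on the right it is $J_{i_0}$, and $K_{i_0}\prec J_{i_0}$ triggers the definition. In case (i), after insertion the left word has length $k+1$ and coincides with the first $k+1$ entries of the right word (which has length $l+1$); since $k+1\leq l<l+1$, the left word is a proper prefix of the right one and so strictly precedes it.

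Thus the argument is pure index bookkeeping and, as the authors note, is essentially immediate once one writes out the definition of $\prec$. The only mild obstacle is keeping track of the one-position shift after position $s$ induced by the insertion of $J$; there is no substantive difficulty.
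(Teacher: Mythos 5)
Your proof is correct. The paper simply labels this lemma (together with Lemma~\ref{lemma:order1}) as ``obvious'' and provides no written argument, so there is nothing to compare against; your case split into (i) the proper-prefix clause and (ii) the first-disagreement clause, with the further split of (ii) into $i_0<s$ and $i_0\geq s$, is exhaustive and handles the one-position shift correctly, which is precisely the bookkeeping the authors are implicitly appealing to.
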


\section{Proof of Lemma (\ref{lem:fullrelation0}) }\label{sec:proof1.2}
In this section we will prove  Lemma (\ref{lem:fullrelation0}). Since the relation in Lemma (\ref{lem:fullrelation0}) does not depend on $p$ and $q$ if $p, q\geq h+h'$, we can assume $p=q=s=h+h'$.
Let $S=\{1,2,\dots, s\}$. For a subset $N\subset S$, let $|N|$ be the number of elements of $N$. Let $\bar N=S\backslash N$.
For $l\in S$, let $\partial_l$ and $\tpartial_l$ be the differentials on $\R$ given by
$$
\partial_l  x_{ij}^{(k)}=\delta^l_i  (k+1)x_{ij}^{(k+1)}, \quad \tpartial_l  x_{ij}^{(k)}=\delta^l_j (k+1) x_{ij}^{(k+1)}.
$$
Let 
$$\partial_N=\sum_{l\in N} \partial_l,\quad\quad \tpartial_N=\sum_{l\in N} \tpartial_l.$$
So $\partial=\partial_S=\tpartial_S$.
Let
$$
\bpartial^l_i=\frac 1 {l!}\partial^l_i,\quad\quad \bpartial^l_N=\frac 1 {l!}\partial^l_N,\quad \quad\bar\tpartial^l_N=\frac 1 {l!}\tpartial^l_N.
$$
For $a,b\in\R$, we have $\bpartial^l_N(ab)=\sum_{i=0}^l\bpartial_N^ia\, \bpartial_N^{l-i}b$ and $\bpartial_N^l a$ and $ \bar\tpartial_N^l a\in \R$.

If $I=\{i_1,\dots,i_k\}\subset S$ and $ J=\{j_1,\dots,j_k\}\subset S$ with $i_a<i_{a+1}$ and $j_a<j_{a+1}$,
let $$\cA(I,J)=\left|
\begin{array}{cccc}
x_{i_1j_1}^{(0)} & x_{i_1j_2}^{(0)} & \cdots &x_{i_1j_k}^{(0)}\\
x_{i_2j_1}^{(0)}& x_{i_2j_2}^{(0)} & \cdots & x_{i_2j_k}^{(0)} \\
\vdots & \vdots & \vdots & \vdots \\
x_{i_kj_1}^{(0)} & x_{i_kj_2}^{(0)} & \cdots & x_{i_kj_k}^{(0)} \\
\end{array}
\right|
$$ be the determinant of the $k\times k$  matrix with entries $x_{ij}^{(0)}$ for $i\in I$ and $j\in K$.
For example, $$\cA(\{1,2\},\{1,3\})=x_{11}^{(0)}x_{23}^{(0)}-x_{13}^{(0)}x_{21}^{(0)}.$$
Let $\epsilon (I,J)=(-1)^{\sum_{i\in I} i+\sum_{j\in J}j}$.

\begin{lemma}\label{lem:relation1} For $I, K, L\subset S$ with $L\subset I$, $|K|=|I|=k$ and $|L|\leq n\leq |I|$, if $l<2n-|L|$,
	$$ \sum_{L\subset N\subset  I, |N|=n}\bpartial^{l}_{N}\cA(I,K)\in\R[k-n+1].$$
\end{lemma}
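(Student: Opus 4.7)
The plan is to proceed by induction on $l$. For the base case $l = 0$, the sum collapses to $\binom{k - |L|}{n - |L|}\cA(I,K)$, a $k$-minor, which lies in $\R[k] \subset \R[k-n+1]$ for $n \ge 1$ (Laplace-expanding any $k$-minor writes it as an $\R$-combination of $(k-n+1)$-minors).

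For the inductive step I would run two parallel analyses. First, writing each $\bpartial_N^l = \sum_{m : N \to \Zplus,\ \sum m_i = l}\prod_{i \in N}\bpartial_i^{m_i}$ and swapping the order of summation gives
$$\sum_{L \subset N \subset I,\,|N|=n}\bpartial_N^l\cA(I,K) \;=\; \sum_{m : I \to \Zplus,\ \sum m_i = l}\binom{k - |L \cup J(m)|}{n - |L \cup J(m)|}\,\det\bigl(x^{(m_i)}_{ij}\bigr)_{i \in I,\ j \in K},$$
where $J(m) := \{i : m_i > 0\}$ and the binomial is taken to be $0$ when $|L \cup J(m)| > n$. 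For any $m$ with $|J(m)| \le n - 1$, at least $k - n + 1$ rows of the determinant carry only order-zero entries, so Laplace expansion along those undifferentiated rows factors out a minor of size $\ge k - n + 1$, placing that term in $\R[k-n+1]$. Second, applying $\partial = \partial_I$ to $T_{n,L,l-1} := \sum_{L \subset N \subset I,\,|N|=n}\bpartial_N^{l-1}\cA(I,K)$ and using $\partial_N\bpartial_N^{l-1} = l\bpartial_N^l$ together with $\partial_I = \partial_N + \partial_{I\setminus N}$, then regrouping $\sum_N \partial_{I \setminus N}\bpartial_N^{l-1}$ by the excluded row, yields
$$l\cdot T_{n,L,l} \;=\; \partial_L\,T_{n,L,l-1} \;+\; \sum_{i \in I \setminus L}\partial_i\,T_{n,L\cup\{i\},l-1},$$
and the induction hypothesis places both $T_{n,L,l-1}$ (needing $l - 1 < 2n - |L|$) and each $T_{n,L\cup\{i\},l-1}$ (needing $l - 1 < 2n - |L| - 1$, i.e., $l < 2n - |L|$) in $\R[k-n+1]$.

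The main obstacle is that $\R[k-n+1]$ is preserved by the full $\partial$ but not by the individual $\partial_i$, so the recursion alone does not close the induction. The way forward is to combine it with the direct expansion: the latter already handles every multi-index with $|J(m)| \le n - 1$, leaving only the \emph{extreme} terms with $|J(m)| = n$ --- possible only when $l \ge n$ --- where Laplace factors out merely a $(k-n)$-minor and so membership is not automatic. These extreme contributions must combine across $N$, or equivalently cancel the non-ideal pieces of $\sum_i \partial_i T_{n,L\cup\{i\},l-1}$ in the recursion, via a derivative-version of a Pl\"ucker/Garnir relation. I expect producing this cancellation to be the hardest step, with the numerical hypothesis $l < 2n - |L|$ precisely providing the degree of antisymmetry in $\cA(I,K)$ needed to force the extreme sum into $\R[k-n+1]$.
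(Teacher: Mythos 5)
Your setup is correct as far as it goes: the base case, the direct expansion identity with the binomial coefficients, the observation that Laplace expansion along the $\geq k-n+1$ undifferentiated rows handles every multi-index of support $\leq n-1$, and the recursion $l\cdot T_{n,L,l}=\partial_L T_{n,L,l-1}+\sum_{i\in I\setminus L}\partial_i T_{n,L\cup\{i\},l-1}$ are all fine. But you have explicitly left open the step you yourself call ``the hardest'': showing that the full-support terms (those with $|J(m)|=n$, where Laplace only produces a $(k-n)$-minor) nonetheless sum into $\R[k-n+1]$. As written this is not a proof of the lemma; it is a proof that the lemma reduces to a cancellation you have not exhibited, and neither of your two mechanisms (the direct expansion nor the recursion, since $\partial_i$ does not preserve $\R[k-n+1]$) closes it.

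The paper resolves exactly this point, and it is worth seeing what the missing ingredient is, because it is not a blunt Pl\"ucker relation among full-support determinants but a different grouping of the multi-indices. Rather than splitting terms by the \emph{support} $J(m)$, the paper takes each multi-index $(l_i)_{i\in N}$ and sets $M=L\cup\{\,i\in N: l_i\neq 1\,\}$ (so that every $j\in N\setminus M$ has $l_j=1$, contributing a bare $\partial_j$). After switching the order of summation, the inner sum over $N$ with $M\subset N\subset I$, $|N|=n$ becomes, modulo Property~1, exactly $\bpartial^{\,n-|M|}_{\bar M}\bigl(\prod_{i\in M}\bpartial_i^{l_i}\bigr)\cA(I,K)$. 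This is the crucial object you did not find: a \emph{normalized power of the partial derivation on the complement of $M$}. Laplace expansion along the rows $M$ writes $\cA(I,K)=\sum_J\pm\cA(M,J)\cA(\bar M\cap I,\bar J)$; the operator $\prod_{i\in M}\bpartial_i^{l_i}$ only touches the first factor, while $\bpartial^{\,n-|M|}_{\bar M}$ only touches the second and acts there as the \emph{full} normalized derivative $\bpartial^{\,n-|M|}$, which preserves $\R[k-|M|]\subset\R[k-n+1]$ since $|M|<n$. This is Property~2 of the paper, and it kills every group with $|M|<n$. What remains is $|M|=n$, where the constraint $l_i\neq 1$ for $i\in M\setminus L$ together with $\sum_{i\in M}l_i=l<2n-|L|$ forces some $l_{i_0}=0$, so Property~1 applies again. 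In short: the hypothesis $l<2n-|L|$ is not used to force antisymmetry among the extreme terms; it is used, after the $M$-grouping and Property~2 have already eliminated the low-$|M|$ contributions, to produce an undifferentiated row in every surviving term. That is the idea your proposal is missing.
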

\begin{proof}We say $a\sim b$ if $a-b\in\R[k-n+1]$. 
	It is an equivalence relation on $\R$.
	$$\bpartial^{l}_{N}\cA(I,K)=\sum_{\substack{\sum_{i\in N}l_i=l\\ l_i\geq 0} }(\prod_{i\in N}  \bpartial_i^{l_i})\cA(I,K).$$
	We have the following properties.\\
	\textsl{Property 1:}
	if there is some $i_0\in N$ with $l_{i_0}=0$, then  $(\prod_{i\in N} \bpartial_i^{l_i})\cA(I,K)\in \R[k-n+1]$.
	\\
	Since
	$$
	(\prod_{i\in N}  \bpartial_i^{l_i})\cA(I,K)=\sum_{J\subset K,|J|=k-n+1} \pm\cA((I\backslash N)\cup\{i_0\},J)(\prod_{i\in N} \bpartial_i^{l_i})\cA(N\backslash\{i_0\}, K\backslash J).
	$$
	\textsl{Property 2:} if $L\subset M \subset I$ with $|M|=m<n$ and  $\sum_{i\in M}l_i=l-n+m$, then
	\begin{equation}\label{eqn:multsim}
	\sum_{M\subset N\subset I, |N|=n}(\prod_{j\in N\backslash M}  \partial_j) (\prod_{i\in M} \bpartial_i^{l_i})\cA(I,K)\in \R[k-n+1].
	\end{equation}
	Since on one hand by property 1,
	$$\bpartial^{n-m}_{\bar M}(\prod_{i\in M}  \bpartial_i^{l_i})\cA(I,K)
	\sim \sum_{M\subset N\subset I, |N|=n}(\prod_{j\in N\backslash M} \partial_j)(\prod_{i\in M} \bpartial_i^{l_i})\cA(I,K);$$
	and on the other hand,
	$$\bpartial^{n-m}_{\bar M}(\prod_{i\in M} \bpartial_i^{l_i})\cA(I,K)=\sum_{J\subset K, |J|=|M|} \pm(\prod_{i\in M} \bpartial_i^{l_i})\cA(M,J)\bpartial^{n-m}\cA( I\backslash M,K\backslash J)$$
	$$\in \R[k-m].$$
	Now
	$$ \sum_{L\subset N\subset  I, |N|=n} \bpartial^{l}_{N}\cA(I,K)=\sum_{L\subset N\subset  I, |N|=n} \sum_{\sum_{i\in N} l_i=l}(\prod_{i\in N} \bpartial_i^{l_i})\cA(I,K)$$
	(take out $\bpartial_j^{l_j}$ with $l_j=1$ and $j\notin L$.)
	$$=\sum_{L\subset N\subset  I, |N|=n} \sum_{L\subset M\subset N}\sum_{\substack{l_i\neq 1, i\in M\backslash L;\\ \sum_{i\in M} l_i=l-n+|M| }}(\prod_{j\in N\backslash M} \partial_j) (\prod_{i\in M} \bpartial_i^{l_i})\cA(I,K)$$
	(switch the order of the summation)
	$$=\sum_{L\subset M\subset  I, |M|\leq n}\sum_{\substack{l_i\neq 1, i\in M\backslash L;\\ \sum_{i\in M} l_i=l-n+|M| }} \sum_{ M\subset N\subset I, |N|=n}(\prod_{j\in N\backslash M} \partial_j)(\prod \bpartial_i^{l_i})\cA(I,K)$$
	(by property 2)
	$$\sim \sum_{L\subset M\subset  I, |M|=n}\sum_{\substack{l_i\neq 1, i\in M\backslash L;\\ \sum_{i\in M} l_i=l }} (\prod \bpartial_i^{l_i})\cA(I,K)$$
	(by property 1, since $l<2n-|L|$, there must some $l_i=0$)
	$$\sim 0$$
\end{proof}
\begin{lemma}\label{lem:relation2}For $T, J, K\subset S$ with $J\cap T=\emptyset$, if $0\leq a\leq l$, then
	$$\sum_{\substack{|I|=|K|\\J\subset I\subset\bar T}}\epsilon(I,K)\bpartial^a\cA(I,K)\bpartial^{l-a}_{T}\cA(\bar I, \bar K)\in \R[s-|J|-|T|-a].$$
\end{lemma}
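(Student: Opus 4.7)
The plan is to prove Lemma~\ref{lem:relation2} by induction on $a$, strengthened so that the statement holds uniformly for every disjoint pair $(T,J)$ of subsets of $S$. Denote the sum in the lemma by $\Sigma(a,l,T,J)$ and set $m = s-|J|-|T|-a$.

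For the base case $a=0$, since $I\cap T=\emptyset$ the operator $\bpartial_T^c$ annihilates $\cA(I,K)$ for $c\geq 1$, so the Leibniz rule gives $\bpartial_T^l\bigl[\cA(I,K)\cA(\bar I,\bar K)\bigr]=\cA(I,K)\,\bpartial_T^l\cA(\bar I,\bar K)$. Combined with the classical Laplace expansion $\sum_K\epsilon(I,K)\cA(I,K)\cA(\bar I,\bar K)=D$ for the full $s$-minor $D=\det(x_{ij}^{(0)})_{i,j\in S}$, the inner $K$-sum collapses to $\bpartial_T^l D$; summing over $I$ yields $\Sigma(0,l,T,J)=2^{s-|J|-|T|}\bpartial_T^l D$. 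A further Laplace expansion of $D$ along the rows $T$ (using that $\partial_T$ annihilates $\cA(\bar T,\cdot)$) places $\bpartial_T^l D\in\R[s-|T|]\subset\R[m]$, settling the base case.

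For the inductive step, apply the total derivation $\partial$ to $\Sigma(a-1,l-1,T,J)$. The Leibniz rule yields
\[
\partial\Sigma(a-1,l-1,T,J) = a\,\Sigma(a,l,T,J) + (l-a+1)\,\Sigma(a-1,l,T,J) + \mathcal{E},
\]
where $\mathcal{E}$ records the contribution when $\partial$ differentiates $\cA(\bar I,\bar K)$ in a direction $\partial_i$ with $i\in\bar T\setminus I$. The first two terms on the right lie in $\R[m+1]\subset\R[m]$ by the inductive hypothesis. To handle $\mathcal{E}$, swap the order of summation so that $i\in\bar T\setminus J$ is fixed first; the restriction $i\notin I$ then becomes $I\subset\bar T\setminus\{i\}$. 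Using the identity
\[
\bpartial_T^{l-a}\partial_i \;=\; \bpartial_{T\cup\{i\}}^{l-a+1} \;-\; \bpartial_T^{l-a+1} \;-\; \sum_{0\leq k<l-a}\bpartial_T^{k}\bpartial_{\{i\}}^{l-a+1-k},
\]
the first summand contributes $\Sigma(a-1,l,T\cup\{i\},J)$ and the second contributes $\Sigma(a-1,l,T,J\cup\{i\})-\Sigma(a-1,l,T,J)$ (the difference coming from the restricted $I$-range); both are in $\R[m]$ by the inductive hypothesis, since $s-|J|-|T\cup\{i\}|-(a-1)=m$. The remaining ``biased'' sub-terms, involving $\bpartial_T^{k}\bpartial_{\{i\}}^{l-a+1-k}\cA(\bar I,\bar K)$ with $k<l-a$, are controlled by invoking Lemma~\ref{lem:relation1} on the minor $\cA(\bar I,\bar K)$ with $L=T\cup\{i\}$ and a suitably chosen $N$, grouping the sub-terms so that the threshold $l<2n-|L|$ applies and the output $\R[|\bar I|-n+1]$ is contained in $\R[m]$. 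Collecting everything yields $a\,\Sigma(a,l,T,J)\in\R[m]$.

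The main obstacle will be the precise combinatorial packaging of the biased sub-terms so that Lemma~\ref{lem:relation1} can be invoked with its conclusion landing in $\R[m]$ rather than in a larger ideal; this requires a careful choice of $N$ at each step and some sign-bookkeeping in the column decomposition. A secondary, mild, subtlety is passing from $a\,\Sigma\in\R[m]$ to $\Sigma\in\R[m]$, which depends on $\mathbb Z$-saturation of $\R[m]$; this can be established elementarily from its generating minors and does not circularly require Theorem~\ref{thm:standard}.
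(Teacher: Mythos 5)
The proposal does not reach a correct proof, and the approach diverges substantially from the paper's. Three concrete problems:

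\textbf{The base case is miscomputed.} In the lemma the set $K$ is \emph{fixed}; the displayed sum runs only over row sets $I$ with $J\subset I\subset\bar T$ and $|I|=|K|$. There is no ``inner $K$-sum,'' and the constraint $J\subset I\subset\bar T$ means the Laplace expansion $\sum_{|I|=|K|}\epsilon(I,K)\cA(I,K)\cA(\bar I,\bar K)=D$ over \emph{all} $I$ is not the sum being taken (unless $T=J=\emptyset$). The asserted identity $\Sigma(0,l,T,J)=2^{s-|J|-|T|}\bpartial^l_T D$ is simply false; the factor $2^{s-|J|-|T|}$ has no source. The conclusion of the base case ($\Sigma(0,l,T,J)\in\R[s-|J|-|T|]$) happens to be true, but not for the reason given, and the true reason is essentially the full content of the lemma at $a=0$.

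\textbf{The error term $\mathcal{E}_{i,k}$ is not controlled.} You candidly describe ``the main obstacle'' of packaging the sub-terms $\bpartial_T^k\bpartial_{\{i\}}^{l-a+1-k}\cA(\bar I,\bar K)$ so that Lemma~\ref{lem:relation1} applies, but this is left as a declaration of intent, not a proof. Lemma~\ref{lem:relation1} produces membership in $\R[\cdot]$ only after \emph{summing over a family of subsets $N$} of fixed cardinality; a single term $\bpartial_T^k\bpartial_{\{i\}}^{c}\cA(\bar I,\bar K)$ with $c\ge 2$ has no such sum built in, and it is far from clear how the residual $I$-sum in $\mathcal{E}_{i,k}$ supplies the required averaging. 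This is exactly where the argument would either succeed or fail, and it is missing.

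\textbf{Dividing by $a$ needs $\mathbb Z$-saturation of $\R[m]$, which is not available here.} Even granting the previous step, you would obtain $a\,\Sigma(a,l,T,J)\in\R[m]$ and would need $\Sigma(a,l,T,J)\in\R[m]$, i.e.\ that $\R/\R[m]$ is $\mathbb Z$-torsion-free. In the paper this is a \emph{consequence} of Theorem~\ref{thm:standard} (which exhibits a $\mathbb Z$-basis), and Theorem~\ref{thm:standard} depends, through Lemmas~\ref{lem:base0}, \ref{lemma:straight1}, \ref{lem:fullrelation0}, \ref{lem:fullrelation}, \ref{lem:relation4}, and \ref{lem:relation3}, on the very Lemma~\ref{lem:relation2} you are trying to prove. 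Asserting the saturation ``can be established elementarily'' without an argument is not enough; as far as I can see it is not elementary, and invoking it here would be circular.

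For contrast, the paper's proof is a short, non-inductive Laplace argument: form the $s\times s$ determinant $\cB(T,J,K)$ obtained from $(x^{(0)}_{ij})$ by zeroing the blocks $T\times K$ and $J\times\bar K$, apply $\bpartial^{l-a}_T\bar\tpartial^a_K$, and expand $\cB(T,J,K)$ by Laplace in two ways. Expanding along the columns $K$ reproduces exactly the left-hand sum of the lemma (the zero blocks force $J\subset I\subset\bar T$). Expanding first along rows $T$ and then along rows $J$ produces terms each containing a factor $\bar\tpartial^{a-b}_K\cA(\bar T\cap\bar J,\bar I\cap\bar M)$, an $(s-|T|-|J|)$-minor with at most $a$ column derivatives applied, hence lying in $\R[s-|J|-|T|-a]$. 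No induction, no division, and no appeal to Lemma~\ref{lem:relation1}. If you want to repair your route, I would instead recommend abandoning the induction on $a$ and adopting this two-expansion device; the identity you wrote for $\bpartial_T^{l-a}\partial_i$ is correct but leads into combinatorics that the auxiliary-determinant trick is designed to avoid.
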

\begin{proof}
	Let $\cB(T,J,K)$ be the determinant of the $s\times s$ matrix with entries $y_{ij}$, where
	$$y_{ij}=x_{ij}^{(0)}, \text{ if }(i,j)\notin (T\times K) \cup (J\times \bar K); \quad  y_{ij}=0,\text{ if } (i,j)\in (T\times K) \cup (J\times \bar K).$$
	On one hand,
	$$\bpartial^{l-a}_T\bar \tpartial^a_K \cB(T,J,K)=\sum_{\substack{|I|=|K|\\J\subset I\subset\bar T}}\epsilon(I,K)\bpartial^a\cA(I,K)\bpartial^{l-a}_{T}\cA(\bar I, \bar K).$$
	On the other hand
	$\bpartial^{l-a}_T\bar\tpartial^a_K \cB(T,J,K)$
	$$=\sum_{b=0}^a\sum_{I\subset \bar K, |I|=|T|}\sum_{M\subset K,|M|=|J|}\pm \bpartial^{l-a}\cA(T,I) \bpartial^b\cA(J,M)\bar \tpartial^{a-b}_K \cA(\bar T\cap \bar J,\bar I\cap\bar M).$$
	It is easy to see that $$\bar \tpartial^{a-b}_K\cA(\bar T\cap \bar J,\bar I\cap\bar M)\in \R[s-|J|-|T|-a].$$
	So
	$$\sum_{\substack{|I|=|K|\\J\subset I\subset\bar T}}\epsilon(I,K)\bpartial^a\cA(I,K)\bpartial^{l-a}_{T}\cA(\bar I, \bar K)\in \R[s-|J|-|T|-a].$$
	
\end{proof}
\begin{lemma}\label{lem:relation3} For $L, K\subset S$ with $|L|, |K|\leq s- n$, if $0\leq l\leq 2(s-n)-|L|-|K|-1$, then
	$$\sum_{N\subset \bar L,|N|=n} \sum_{J\subset \bar K, |J|=n} \epsilon (N,J)\cA(N,J) \bpartial^l\cA(\bar N,\bar J)\in \R[n+1].$$
\end{lemma}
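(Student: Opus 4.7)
The strategy is to split the $\bpartial^l$ derivative via Leibniz, reducing Lemma \ref{lem:relation3} to its $l=0$ case together with Lemma \ref{lem:relation2}. Writing
\[
\cA(N,J)\bpartial^l\cA(\bar N,\bar J)=\bpartial^l\bigl[\cA(N,J)\cA(\bar N,\bar J)\bigr]-\sum_{a=1}^{l}\bpartial^a\cA(N,J)\,\bpartial^{l-a}\cA(\bar N,\bar J)
\]
and summing over $N\subset\bar L$, $J\subset\bar K$ of size $n$, the first piece equals $\bpartial^l F(L,K)$ where $F(L,K)=\sum_{N,J}\epsilon(N,J)\cA(N,J)\cA(\bar N,\bar J)$. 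Since $\R[n+1]$ is closed under $\bpartial$ and our hypothesis $l\leq 2(s-n)-|L|-|K|-1$ forces $|L|+|K|\leq 2(s-n)-1$, it suffices to handle the $l=0$ statement $F(L,K)\in\R[n+1]$ separately.

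For this $l=0$ base case I would use multilinearity in the $\bar L$-rows to identify $F(L,K)=(-1)^n[u^n]\det M(u)$, where $M(u)$ is the $s\times s$ matrix equal to $x_{ij}^{(0)}$ outside the $\bar L\times\bar K$ block and to $(1-u)x_{ij}^{(0)}$ inside. A further multilinear expansion gives $\det M(u)=\sum_{T\subset\bar L}(1-u)^{|T|}\det\tilde M_T$, and the block-zero structure decomposes each $\det\tilde M_T$ into a signed sum of three-minor products $\cA(\bar L\setminus T,K_T)\,\cA(T,\bar K_T)\,\cA(L,\mathrm{rest})$. Extracting the coefficient of $u^n$ one finds $F(L,K)=\sum_{T\subset\bar L,\,|T|\geq n}\binom{|T|}{n}\det\tilde M_T$; for $|T|\geq n+1$ the middle factor is an $(n+1)$-minor and the term already lies in $\R[n+1]$, while the slack $|\bar L|+|\bar K|\geq 2n+1$ (equivalent to the hypothesis) is exactly what one needs to rewrite the marginal $|T|=n$ contribution as a combination of $(n+1)$-minor products via a Laplace expansion along an extra row from $\bar L\setminus T$ (or column from $\bar K$).

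Each residual term $\sum_{N,J}\epsilon(N,J)\bpartial^a\cA(N,J)\bpartial^{l-a}\cA(\bar N,\bar J)$ with $1\leq a\leq l$ is treated by writing $\bpartial^{l-a}\cA(\bar N,\bar J)=\bpartial^{l-a}_{\bar N}\cA(\bar N,\bar J)$, using the decomposition $\bpartial^{l-a}_{\bar N}=\sum_{c+d=l-a}\bpartial^c_L\bpartial^d_{\bar L\setminus N}$, and then applying Lemma \ref{lem:relation2} with $T=L$, $J_{\mathrm{lem}}=\emptyset$ and $K_{\mathrm{lem}}$ ranging over size-$n$ subsets of $\bar K$ for the $\bpartial^c_L$ piece, in combination with Lemma \ref{lem:relation1} to absorb the $\bpartial^d_{\bar L\setminus N}$ piece that depends on $N$. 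The inequality $l\leq 2(s-n)-|L|-|K|-1$ is precisely what makes all the resulting bounds collapse into $\R[n+1]$. The hard part will be the marginal $|T|=n$ case of the base step: exhibiting the correct Laplace/Pl\"ucker identity that uses the full slack condition to place these specific products of small minors in $\R[n+1]$ is the most delicate piece of bookkeeping.
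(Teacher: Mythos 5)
Your decomposition of the first piece, $\bpartial^l F(L,K)$, is sound: $F(L,K)=\sum_{N\subset\bar L}\cD(N,K)$ where $\cD(N,K)$ is the determinant of the matrix with the $N\times K$ block zeroed out, the ideal $\R[n+1]$ is closed under $\bpartial$, and your $(1-u)$-generating-function identity $F(L,K)=(-1)^n[u^n]\det M(u)$ checks out. But the $l=0$ reduction is a red herring relative to the real obstacle, which lies in the residual terms, and there you have a genuine gap.

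The residual $\sum_{N\subset\bar L}\sum_{J\subset\bar K}\epsilon(N,J)\bpartial^a\cA(N,J)\,\bpartial^{l-a}\cA(\bar N,\bar J)$ (for $1\le a\le l$) still sums over \emph{complementary} row sets $N,\bar N$ \emph{and} complementary column sets $J,\bar J$. Lemma \ref{lem:relation2} sums over row sets $I$ with the columns $K$ \emph{fixed}, and Lemma \ref{lem:relation1} sums over a family of derivations $\bpartial^l_N$ applied to a \emph{single fixed} minor $\cA(I,K)$; neither lemma matches the residual's structure, because both your minors $\cA(N,J)$ and $\cA(\bar N,\bar J)$ vary with $N$ (and $J$). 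The substitution $T=L$, $J_{\mathrm{lem}}=\emptyset$ in Lemma \ref{lem:relation2} also fails quantitatively: it places the sum only in $\R[s-|L|-a]$, and for $a$ in the middle of the range $1\le a\le l$ (which can be as large as $2(s-n)-|L|-|K|-1 > s-|L|-n-1$) this is strictly weaker than $\R[n+1]$. Your proposed fix of splitting $\bpartial^{l-a}_{\bar N}=\sum_{c+d}\bpartial^c_L\bpartial^d_{\bar L\setminus N}$ and ``absorbing the $\bpartial^d_{\bar L\setminus N}$ piece with Lemma \ref{lem:relation1}'' does not close this, because Lemma \ref{lem:relation1} needs the minor being differentiated to be independent of the summation variable, and here $\cA(\bar N,\bar J)$ depends on $N$.

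The step you are missing is the paper's double Laplace expansion of $\cD(N,K)$. Applying $\bpartial^l_{\bar N}$ to $\cD(N,K)$ gives your LHS summand via the Laplace expansion along $N$ (because $\bpartial_{\bar N}$ kills $\cA(N,J)$ and acts as $\bpartial$ on $\cA(\bar N,\bar J)$), but the \emph{other} Laplace expansion of the same $\cD(N,K)$, along the columns $K$, rewrites it as $\sum_{I\subset\bar N}\epsilon(I,K)\cA(I,K)\cA(\bar I,\bar K)$. This converts the complementary-column sum over $J$ into a row sum over $I$ with the columns $K$ fixed, which is exactly the shape Lemma \ref{lem:relation2} wants, and leaves an inner sum $\sum_{N}\bpartial^{l-a}_{\bar N\cap\bar I}\cA(\bar I,\bar K)$ over derivations of a \emph{fixed} minor, which is exactly the shape Lemma \ref{lem:relation1} wants. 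Without this change of expansion, the residual terms simply cannot be fed to Lemmas \ref{lem:relation1} and \ref{lem:relation2} as stated, so the proposal does not go through.
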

\begin{proof}Let $|K|=k$.
	For $N\subset S$ with $|N|=n$,
	let $\cD(N,K)$ be the determinant of  the $s\times s$ matrix with entries
	$$
	y_{ij}=x_{ij}^{(0)}, \text { if }i\notin N \text{ or } j\notin K; \quad y_{ij}=0, \text { if } i\in N \text{ and  } j\in K.
	$$
	We have
	$$\cD(N,K)=\sum_{J\subset \bar K, |J|=n} \epsilon (N,J)\cA(N,J) \cA(\bar N,\bar J)$$
	and
	$$\cD(N,K)=\sum_{I\subset \bar N, |I|=k} \epsilon (I,K)
	\cA(I,K)\cA(\bar I,\bar K).$$
	By taking the summation of $\bpartial^l\cD(N,K)$ over $N\subset\bar L$, we have
	\begin{eqnarray}
	&&\sum_{N\subset \bar L,|N|=n} \sum_{J\subset \bar K, |J|=n}\epsilon (N,J) \cA(N,J) \bpartial^l \cA(\bar N,\bar J)\nonumber\\
	&=& \sum_{N\subset \bar L,|N|=n}\bpartial^l_{\bar N}\cD(N,K)\nonumber\\
	&=&\sum_{N\subset \bar L,|N|=n} \sum_{I\subset \bar N, |I|=k}\sum_{a=0}^l\epsilon (I,K)\bpartial^a\cA(I,K) \bpartial^{l-a}_{\bar N\cap\bar I}\cA(\bar I,\bar K)\nonumber\\
	& &\text{(switching the order of the summation)}\nonumber\\
	&=&\sum_{a=0}^l\sum_{I\subset S, |I|=k}\epsilon (I,K)\bpartial^a\cA(I,K)\sum_{N\subset \bar I\cap \bar L, |N|=n} \bpartial^{l-a}_{\bar N\cap\bar I}\cA(\bar I,\bar K).\nonumber
	\end{eqnarray}
	If $l-a<2(s-k-n)-|\bar I\cap L|$, let $J=\bar N\cap \bar I$. By Lemma \ref{lem:relation1},
	$$\sum_{N\subset \bar I\cap \bar L, |N|=n} \bpartial^{l-a}_{\bar N\cap \bar I}\cA(\bar I,\bar K)
	=\sum_{ L\cap \bar I\subset J \subset \bar I , |J|=s-k-n} \bpartial^{l-a}_{J}\cA(\bar I,\bar K)\in \R[n+1].$$
	If $l-a\geq 2(s-k-n)-|\bar I\cap L|$, 	let $J=I\cap L$, $T=\bar N\cap \bar I$.	By Lemma \ref{lem:relation2},
	$$\sum_{I\subset S, |I|=k}\epsilon (I,K)\bpartial^a\cA(I,K)\sum_{N\subset \bar I\cap \bar L, |N|=n} \bpartial^{l-a}_{\bar N\cap\bar I}\cA(\bar I,\bar K)$$
	$$
	=\sum_{\substack{J\subset L,T\subset \bar J\\|T|=s-n-k} }\sum_{\substack{T\subset \bar I\subset\bar J,\\ |I|=|K|}}\bpartial^a\cA(I,K)\bpartial^{l-a}_{T}\cA(\bar I,\bar K)\in\R[n+1].$$
	This completes the proof.
\end{proof}

For $L_0,L_1, K_0,K_1\subset S$ with $L_0\cap L_1=\emptyset=K_0\cap K_1$, $|L_0|+|L_1|\leq s- n$ and $|K_0|+|K_1|\leq s- n$,
let
$$\cF^k_l(L_0,L_1,K_0,K_1,n)=\sum_{\substack{|N|
		=n\\L_0\subset N\subset \bar L_1}} \sum_{\substack{|J|=n\\K_0\subset J\subset \bar K_1}} \epsilon (N,J)\bpartial^l\cA(N,J) \bpartial^{k-l}\cA(\bar N,\bar J).$$
We have
\begin{equation}\label{eqn:partialF}\bpartial^m \cF^k_l(L_0,L_1,K_0,K_1,n)=\sum_{a=0}^m C_{l+a}^lC_{k+m-l-a}^{k-l}\cF^{k+m}_{l+a}(L_0,L_1,K_0,K_1,n).\end{equation}

\begin{lemma}\label{lem:relation4}  If $l\leq 2(s-n)-|L_0|-|L_1|-|K_0|-|K_1|-1$, then
	$$\cF^l_0(L_0,L_1,K_0,K_1,n)\in \R[n+1].$$
\end{lemma}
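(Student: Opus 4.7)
The plan is to reduce Lemma \ref{lem:relation4} to Lemma \ref{lem:relation3} by using inclusion-exclusion to eliminate the lower bound conditions $L_0 \subset N$ and $K_0 \subset J$. Since Lemma \ref{lem:relation3} handles sums of exactly the form appearing in $\cF^l_0$ but only with upper-bound conditions $N \subset \bar L$ and $J \subset \bar K$, the natural move is to rewrite each indicator $\mathbf{1}[L_0 \subset N]$ as $\prod_{i\in L_0}(1-\mathbf{1}[i\notin N])$, expand, and similarly for $J$.

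More concretely, I would first establish the identity
$$\sum_{\substack{|N|=n\\L_0\subset N\subset \bar L_1}} = \sum_{M\subset L_0}(-1)^{|M|}\sum_{\substack{|N|=n\\N\subset \overline{L_1\cup M}}},$$
and the analogous one for $J$, $M'\subset K_0$. Substituting both into the definition of $\cF^l_0(L_0,L_1,K_0,K_1,n)$ yields
$$\cF^l_0(L_0,L_1,K_0,K_1,n) = \sum_{\substack{M\subset L_0\\M'\subset K_0}}(-1)^{|M|+|M'|}\sum_{\substack{|N|=n\\N\subset \overline{L_1\cup M}}}\sum_{\substack{|J|=n\\J\subset \overline{K_1\cup M'}}}\epsilon(N,J)\,\cA(N,J)\,\bpartial^l\cA(\bar N,\bar J),$$
so each inner double sum is precisely of the type covered by Lemma \ref{lem:relation3}, with $L=L_1\cup M$ and $K=K_1\cup M'$ (these are disjoint unions, since $L_0\cap L_1=\emptyset$ and $K_0\cap K_1=\emptyset$).

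It then remains to verify the numerical hypothesis of Lemma \ref{lem:relation3} for every pair $(M,M')$: namely $l\leq 2(s-n)-|L_1\cup M|-|K_1\cup M'|-1 = 2(s-n)-|L_1|-|M|-|K_1|-|M'|-1$. Since $|M|\leq |L_0|$ and $|M'|\leq |K_0|$, the worst case is $M=L_0$, $M'=K_0$, and there the condition reads $l\leq 2(s-n)-|L_0|-|L_1|-|K_0|-|K_1|-1$, which is exactly our hypothesis. For all smaller $(M,M')$ the bound is only weaker. Thus Lemma \ref{lem:relation3} applies to every term in the inclusion-exclusion expansion, placing each one in $\R[n+1]$, and hence $\cF^l_0(L_0,L_1,K_0,K_1,n)\in\R[n+1]$ as desired.

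I do not expect any real obstacle here; the argument is a routine inclusion-exclusion sandwiched between two carefully written identities, and the numerical check is immediate from the monotonicity of the right-hand side of the Lemma \ref{lem:relation3} bound in $|L|$ and $|K|$. The only thing to be careful about is bookkeeping of signs and the disjointness assumptions that allow $|L_1\cup M|=|L_1|+|M|$.
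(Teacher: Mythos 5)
Your proof is correct and takes essentially the same approach as the paper: the paper proves the lemma by induction on $|L_0|+|K_0|$, with inductive step $\cF^l_0(L_0,L_1,K_0,K_1,n)=\cF^l_0(L_0\setminus\{i_0\},L_1,K_0,K_1,n)-\cF^l_0(L_0\setminus\{i_0\},L_1\cup\{i_0\},K_0,K_1,n)$, which is precisely the one-element inclusion--exclusion you iterate. Unrolling that induction gives exactly your closed-form alternating sum over $M\subset L_0$, $M'\subset K_0$, with the same base case (Lemma \ref{lem:relation3}) and the same numerical check.
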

\begin{proof}We can show this by induction on $|L_0|+|K_0|$.
	If $|L_0|+|K_0|=0$, $L_0=K_0=\emptyset$. By Lemma \ref{lem:relation3}, the lemma is true.

	Suppose the lemma is true for $|L_0|+|K_0|=m$.
	For $|L_0|+|K_0|=m+1$,
	assume $L_0\neq \emptyset$ (similarly for $K_0\neq \emptyset$). Let $i_0\in L_0$ and $L=L_0\backslash\{i_0\}$,
	$$\cF^l_0(L_0,L_1,K_0,K_1,n)=\cF^l_0(L,L_1,K_0,K_1,n)-\cF^l_0(L,L_1\cup\{i_0\},K_0,K_1,n)\in \R[n+1] $$
	by induction.
\end{proof}
Now we have the relations for the determinant of the matrix.
\begin{lemma}\label{lem:fullrelation}
	For $L_0,L_1, K_0,K_1\subset S$ with $L_0\cap L_1=\emptyset=K_0\cap K_1$, $|L_0|+|L_1|\leq s- n$ and $|K_0|+|K_1|\leq s- n$, let $l_0= 2(s-n)-|L_0|-|L_1|-|K_0|-|K_1|-1$.
	Given a fixed integer $0 \leq k_0\leq m-l_0$ and integers $a_{m,{k_0+l}}$, $0\leq l\leq l_0$, there are integers $a_{m,k}$, $0\leq k<k_0$ or $k_0+l_0<k\leq m$ such that
	$$\sum_{k=0}^ma_{m,k}\cF^m_k(L_0,L_1, K_0,K_1,n)\in\R[n+1].$$
	
\end{lemma}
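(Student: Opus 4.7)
The plan is to construct the desired relation as an integer linear combination of $l_0+1$ specific relations obtained by applying normalized derivatives to those supplied by Lemma \ref{lem:relation4}. For each $L$ with $0\le L\le l_0$, Lemma \ref{lem:relation4} yields $\cF^L_0\in\R[n+1]$, and since $\R[n+1]$ is closed under $\partial$ and $m\ge l_0\ge L$, formula \eqref{eqn:partialF} with $(k,l)=(L,0)$ gives
$$R_L\,:=\,\bpartial^{m-L}\cF^L_0\,=\,\sum_{a=0}^{m-L}\binom{m-a}{L}\cF^m_a\ \in\ \R[n+1],$$
where the arguments $(L_0,L_1,K_0,K_1,n)$ are suppressed. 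Using $\binom{m-a}{L}=0$ for $a>m-L$, I extend the summation to $a=0,\ldots,m$ without change.

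Given the prescribed integers $a_{m,k_0+j}$ for $0\le j\le l_0$, I seek integers $c_0,\ldots,c_{l_0}$ such that the coefficient of $\cF^m_{k_0+j}$ in $\sum_L c_L R_L$ equals $a_{m,k_0+j}$ for each $j$. If such $c_L$ can be found, then defining $a_{m,k}$ for $k<k_0$ or $k>k_0+l_0$ to be the corresponding coefficient of $\cF^m_k$ in $\sum_L c_L R_L$ produces a relation $\sum_{k=0}^m a_{m,k}\cF^m_k\in\R[n+1]$ with all coefficients in $\mathbb Z$, which is exactly the claim. The linear system in question reads $Mc=a$, where $M$ is the $(l_0+1)\times(l_0+1)$ matrix with entries $M_{L,j}=\binom{m-k_0-j}{L}$ for $0\le L,j\le l_0$; so the lemma reduces to showing $\det M=\pm 1$ over $\mathbb Z$.

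To verify this, write $N=m-k_0\ge l_0$ and note that row $L$ records the values at $j=0,1,\ldots,l_0$ of the polynomial $p_L(j)=\binom{N-j}{L}$, which has degree $L$ in $j$ with leading coefficient $(-1)^L/L!$. Subtracting suitable multiples of rows $0,\ldots,L-1$ from row $L$ does not alter the determinant and reduces $M$ to the matrix with entries $(-1)^L j^L/L!$. The determinant then factors as $\prod_{L=0}^{l_0}\tfrac{(-1)^L}{L!}$ times the Vandermonde $\det[j^L]_{L,j=0}^{l_0}=\prod_{k=1}^{l_0}k!$, and the factorials cancel to leave $\det M=(-1)^{l_0(l_0+1)/2}=\pm 1$. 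This unimodularity is the substantive point of the argument, and it is independent of $N$ so no further constraint on $m-k_0$ is needed; the rest of the proof is formal bookkeeping, and I anticipate no significant obstacle beyond this determinant computation.
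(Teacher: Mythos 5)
Your proof is correct and follows essentially the same route as the paper: apply $\bpartial^{m-l}$ to the relations $\cF^l_0\in\R[n+1]$ from Lemma \ref{lem:relation4} for $l=0,\dots,l_0$, expand via \eqref{eqn:partialF}, and invert the resulting $(l_0+1)\times(l_0+1)$ integer matrix of binomial coefficients. The paper merely asserts that this matrix has determinant $\pm 1$; you supply the Vandermonde-reduction computation, which is a welcome extra detail but does not change the structure of the argument.
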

\begin{proof} For $0\leq l\leq l_0$,
	by acting $\bpartial^{m-l}$ on the relations of Lemma \ref{lem:relation4}, we get
	$$\bpartial^{m-l}\cF^l_0(L_0,L_1,K_0,K_1,n)=\sum_{k=0}^{m-l}C_{m-k}^l\cF^m_k(L_0,L_1,K_0,K_1,n)\in \R[n+1].$$
	Now the $(l_0+1)\times (l_0+1)$ integer matrix with entries $c_{ij}=C_{m-k_0-i}^j$, $0\leq i,j\leq l_0$ is invertible since the determinant of this matrix is $\pm1$. Let $b_{ij}$ be the entries of the inverse matrix; clearly
	$b_{ij}$ are integers. Let $a_{m,k}= \sum_{l=0}^{l_0}\sum_{j=0}^{l_0}C_{m-k}^l b_{l,j}a_{m,k_0+j}$. These integers satisfy the lemma.
\end{proof}
\begin{proof}[Proof of Lemma \ref{lem:fullrelation0}]
	We only need to show the lemma  when $v_i=u_i=i$, $v'_i=u'_i=h+i$.
	Let $$L_0=\{i_1+1,\dots, h\},\quad L_2=\{h+i_2+1,\dots, s\},$$
	$$K_0=\{j_1+1,\dots, h\},\quad K_2=\{h+j_2+1,\dots, s\}.$$
	By the definition of $\cF^k_l$,
	\begin{eqnarray*}\cF^m_{m-k}(L_0,L_1,K_0,K_1,h)=\sum_{\sigma,\sigma'} \frac{1}{i_1!i_2!j_1!j_2!}\sign(\sigma) \sign(\sigma')\hspace{4cm}\\
		\left(
		\begin{array}{ccc}
			\bpartial^{m-k} (u_h,\dots,u_{i_1+1},\sigma(u_{i_1}),\dots,\sigma(u_1)&|&\sigma'(v_1),\dots, \sigma'(v_{j_1}),v_{j_1+1},\dots,v_h)\\
			\bpartial^{k} (u'_{h'},\dots,u'_{i_2+1},\sigma(u'_{i_2}),\dots,\sigma(u'_1)&|&\sigma'(v'_1),\dots, \sigma'(v'_{j_2}),v'_{j_2+1},\dots,v'_{h'})
		\end{array}
		\right).
	\end{eqnarray*}
	Let $a_k=a_{m,m-k}$, by Lemma \ref{lem:fullrelation}, we have Equation (\ref{eqn: relation4'})
\end{proof}

\section{Proof of Lemma \ref{lem:base0}}\label{section:proof2.5}
In this section we prove Lemma \ref{lem:base0}.
By Lemma \ref{lemma:order1}, we can assume the monomials are expressed as an ordered product $J_{1}J_{2}\cdots J_{b}$ with $J_{a}\prec J_{a+1}$.
For $\alpha \in \cM(\cJ)$,
let $$\R(\alpha)=\{\sum c_i\beta_i\in\R|c_i\in\mathbb Z, \beta_i\in \cM(\cJ),\beta_i\prec \alpha, \beta_i\neq \alpha\},$$
be the space of linear combinations of elements preceding $\alpha$ in $\cM(\cJ)$ with integer coefficients. 
\begin{lemma}\label{lemma:case2}
	If $J_1J_2$ is not standard, $J_1J_2\in \R(J_1)$. 
\end{lemma}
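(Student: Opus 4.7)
By Lemma \ref{lemma:order1} we may reorder so that $J_1 \prec J_2$; in particular $h_2 := sz(J_2) \leq h_1 := sz(J_1)$. Write
$$J_1 = \bpartial^{n_1}(u_{h_1},\dots,u_1|v_1,\dots,v_{h_1}), \qquad J_2 = \bpartial^{n_2}(u'_{h_2},\dots,u'_1|v'_1,\dots,v'_{h_2}),$$
and let $E_1^* \in \cE(J_1)$ be the unique maximal element, namely the one concentrating all weight in the top slot. By Definition \ref{def:standard} and Corollary \ref{cor:critgreat1}, non-standardness of $J_1 J_2$ is equivalent to $J_2$ not being greater than $E_1^*$; combined with Lemma \ref{lemma:critgreat} this gives the single numerical inequality
$$wt(J_2) - wt(E_1^*(h_2)) \;<\; L(E_1^*,J_2) + R(E_1^*,J_2) =: L+R.$$

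The plan is to apply Lemma \ref{lem:fullrelation0} with $h = h_1$, $h' = h_2$, $m = n_1 + n_2$, and the $(u,v,u',v')$-data copied from $J_1,J_2$, choosing the shuffle sizes $(i_1,j_1,i_2,j_2)$ in terms of $(h_1,h_2,L,R)$ so that (a) the resulting relation has enough freedom that the free range $[k_0,k_0+l_0]$ can be arranged to contain $k = n_2$, and (b) the identity orbit of the shuffle at $k = n_2$ contributes exactly $\pm J_1 J_2$ with coefficient $\pm 1$. The non-standardness inequality above is precisely what makes (a) possible. For (b) the naive maximal choice $(h_1,h_2,h_1,h_2)$ must be avoided, because commutativity of $\R$ collapses shuffle orbits in pairs and introduces a factor of $i_1!i_2!j_1!j_2!$ into the coefficient of $J_1 J_2$ that is incompatible with $\mathbb Z$-integrality (as is visible already in the classical Plücker relation); one must take the smallest shuffle blocks consistent with $l_0 = i_1+i_2+j_1+j_2-2h_1-1 \geq 0$. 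Setting $a_{n_2} = 1$ and the other free $a_k$'s to $0$, Lemma \ref{lem:fullrelation0} yields
$$\pm J_1 J_2 \;+\; \sum_{\alpha} c_\alpha\,\gamma_\alpha \;\in\; \R[h_1+1],$$
where each $\gamma_\alpha$ is an ordered product of two elements of $\cJ$ coming from a non-identity orbit at $k = n_2$ or from an orbit at some other $k$ with a forced coefficient.

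To conclude, I would verify that every $\gamma_\alpha$ lies in $\R(J_1)$. For orbits with $k > n_2$ the first minor has size $h_1$ but weight $m-k < n_1$, so it is $\prec J_1$ in $\cJ$ and is the smaller factor of the product, so $\gamma_\alpha \in \R(J_1)$. For non-identity orbits at $k \leq n_2$ the first factor $J_1'$ has the same size as $J_1$ (and the same weight when $k=n_2$), and the $(L,R)$-calibrated choice of shuffle blocks forces the index swapped in from $J_2$ to be lex-smaller than the displaced index at the first differing position of the sorted word, giving $J_1' \prec J_1$; this is the combinatorial content of Lemma \ref{lem:lrnumber} together with Corollary \ref{cor:lrnumber2}. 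Finally, every element of $\R[h_1+1]$ is a $\mathbb Z$-linear combination of monomials in $\cM(\cJ)$ containing at least one factor of size $>h_1$; such a factor is $\prec J_1$, so these monomials also lie in $\R(J_1)$. Solving the displayed relation for $J_1 J_2$ gives the lemma.

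The main obstacle is the uniform verification that every non-identity orbit produces $J_1' \prec J_1$, together with the simultaneous requirement that the coefficient of $J_1 J_2$ is exactly $\pm 1$: both depend delicately on the choice of $(i_1,j_1,i_2,j_2)$, and the second in particular rules out the most symmetric shuffles. The lex-comparison is where the quantitative $L,R$-control of Section 4 (especially Lemma \ref{lem:lrnumber}) is unavoidable; the integrality of the coefficient is an orbit-to-monomial count in the shuffle sum that must be organized so that each polynomial monomial appears from a single orbit.
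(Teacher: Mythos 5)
Your overall strategy is the same as the paper's: apply Lemma \ref{lem:fullrelation0} with $m=n_1+n_2$ and carefully positioned shuffle blocks, set $a_{n_2}=1$ and the remaining free $a_k$'s to $0$, and argue every other monomial in the resulting identity lies in $\R(J_1)$. However there are several concrete gaps. First, your integrality heuristic is based on a misreading of the lemma: by the Remark immediately after Lemma \ref{lem:fullrelation0}, the normalization $\tfrac{1}{i_1!i_2!j_1!j_2!}$ is exactly cancelled by the orbit overcounting, so \emph{every} choice of shuffle blocks yields coefficient $\pm a_k$ for each monomial. Integrality is never the constraint, and ``smallest blocks with $l_0\geq 0$'' is not the paper's prescription. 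What actually forces the choice is the behavior of non-identity orbits at $k=n_2$: the paper first computes $i_0=L(E_1,J_2)$, $j_0=R(E_1,J_2)$ and then picks \emph{witness indices} $i_1,j_1$ with $u^2_{i_1}<u^1_{i_1-i_0+1}$ and $v^2_{j_1}<v^1_{j_1-j_0+1}$, and places the shuffle blocks at $u^1_{h_1},\dots,u^1_{i_1-i_0+1}$ against $u^2_{i_1},\dots,u^2_1$ (and similarly for $v$'s). It is precisely this witness that guarantees every $u^2$ swapped into the top minor is strictly below the smallest $u^1$ it can displace, making the top factor $\prec J_1$; your proposal never constructs $i_1,j_1$ and so never justifies this.

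Second, your treatment of $k\leq n_2$ is off. You appeal to Lemma \ref{lem:lrnumber} and Corollary \ref{cor:lrnumber2}, but those are needed only in the inductive step (Lemma \ref{lemma:straight1}), where one compares against an $E_{b-2}$; in the $b=2$ base case the paper never uses them. Instead the key step you're missing is how the non-standardness inequality controls the dangerous range of $k$: with the paper's blocks, the Lemma \ref{lem:fullrelation0} parameter is $l_0^{\mathrm{Lem}}=i_0+j_0-1$, so the free window of $a_k$'s has length $i_0+j_0$. Lemma \ref{lemma:critgreat} gives $i_0+j_0>n_2-n_1$ when $h_1=h_2$ (resp.\ $i_0+j_0>n_2$ when $h_1>h_2$), which is exactly what lets one kill $a_k$ for all $k$ with $n_1\leq k<n_2$ (resp.\ $0\leq k<n_2$). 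For the remaining $k<n_1$ in the equal-size case, the \emph{second} factor has weight $k<n_1$ and size $h_1$, hence is $\prec J_1$ after reordering the product via Lemma \ref{lemma:order1}. Your write-up collapses all $k\leq n_2$ into one vague case and invokes the wrong machinery; the actual proof requires distinguishing $k=n_2$ (witness argument), $n_1\leq k<n_2$ (vanishing $a_k$), and $k<n_1$ (small-weight second factor), and the case split on $h_1=h_2$ versus $h_1>h_2$.
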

\begin{proof}
	Assume $J_i=\bpartial^{n_i}(u^i_{h_i},\dots,u^i_2,u^i_1|v^i_1,v^i_2,\dots,v^i_{h_i})$, for $i=1,2$. 
	Let $E_1=((u^1_{h_1},n_1),\dots,(u^1_1,0)|(v^1_1,0),\dots,(v^1_{h_1},0))$.
Let $i_0=L(E_1,J_2)$, $j_0=R(E_1,J_2)$ and  $l_0=i_0+j_0$.
 If $i_0\neq 0$, there is $i_0\leq i_1\leq h_2$, such that $u^2_{i_1}<u^1_{i_1-i_0+1}$. If $i_0=0$, let $i_1=0$.
If $j_0\neq 0$,  there is $j_0\leq j_1\leq h$, such that $v^2_{j_1}<v^1_{j_1-j_0+1}$. If $j_0=0$, let $j_1=0$.  Let $i_*=i_1-i_0$, $j_*=j_1-j_0$ and $m=n_1+n_2$.
By Lemma \ref{lem:fullrelation0}, there are integers $a_k$ with $a_{n_2-l}=\delta^0_l$ for $0\leq l\leq l_0-1$.
	 	\begin{equation}\label{eqn:relation110}
	 	\sum \epsilon a_{k} \left(
	 	\begin{array}{ccc}
	 	\bpartial^{m-k} (\underline{u^1_{h_1},\dots,u^1_{i_*+1}},u^1_{i_*},\dots,u^1_1&|&v^1_1,\dots, v^1_{j_*},\underline{v^1_{j_*+1},\dots,v^1_{h_1}})\\
	 	\bpartial^{k} (u^2_{h_2},\dots,u^2_{i_1+1},\underline{u^2_{i_1},\dots,u^2_1}&|&\underline{v^2_1,\dots, v^2_{j_1}},v^2_{j_1+1},\dots,v^2_{h_2})
	 	\end{array}
	 	\right)
	 	\end{equation}
	 $$\in \R[h_1+1].$$
	 		\begin{enumerate}
	 		\item If $h_1=h_2$, then $n_1\leq n_2$.
	 		Since $J_1J_2$ is not standard, $J_2$ is not greater than $E_1$. By Lemma \ref{lemma:critgreat},  $l_0> n_2-n_1\geq 0$.
	 	$J_1J_2\in \R(J_1)$ since in Equation \eqref{eqn:relation110}:
		\begin{itemize}
			\item
			All the terms with $k=n_2$ precede $J_1$ except $J_1J_2$ itself;
			\item
			All the terms with $k=n_2-1,\dots,n_1$ vanish since $a_{k}=0$;
			\item
			All the terms with $k=n_2+1,\dots, m$ precede $J_1$ since the weight of the upper $\bpartial$-list is $m-k<n_1$;
			\item
			All the terms with $k=0,\dots, n_1-1$ precede $J_1$ after exchanging the upper $\bpartial$-list and the lower $\bpartial$-list since the weight of the lower $\bpartial$-list is $k<n_1$;
			\item
			The terms in $\R[h_1+1]$ precede $J_1$ since they have bigger sizes.
		\end{itemize}
		
		\item If $h_1>h_2$.
		Since $J_1J_2$ is not standard, by Lemma \ref{lemma:critgreat}, $l_0> n_2$.
		$J_1J_2\in \R(J_1)$ since in Equation \eqref{eqn:relation110}:
		\begin{itemize}
			\item
			All the terms with $k=n_2$ precede $J_1$ in the lexicographic order except $J_1J_2$ itself;
			\item
			All the terms with $k=n_2-1,\dots,0$ vanish since $a_{k}=0$;
			\item
			All the terms with $k=n_2,\dots, m$ precede $J_1$ since the weight of the upper $\bpartial$-list is $m-k<n_1$;
			\item
			The terms in $\R[h_1+1]$ precede $J_1$ since they have bigger sizes.
		\end{itemize}
	\end{enumerate}
\end{proof}

\begin{proof}[Proof of Lemma \ref{lem:base0}]
	We prove the lemma by induction on $b$.\\
	If $b=1$, $J_1$ is standard.\\
	If $b=2$, by Lemma \ref{lemma:case2}, the lemma is true. \\
	For $b\geq 3$, assume the lemma is true for $b-1$.
 We can assume $J_1\cdots J_{b-1}$ is standard by induction and Lemma \ref{lemma:order2}. Let $E_1\cdots E_{b-1}\in\cS\cM(\cE)$ be the standard order product of element of $\cE$ corresponding to $J_1\cdots J_{b-1}$. If $J_1\cdots J_b$ is not standard, then $J_b$ is not greater than $E_{b-1}$. By Lemma \ref{lemma:straight1} (below), $J_{b-1}J_b=\sum K_if_i$ with $K_i\in \cJ$, $f_i\in \R$ such that $K_i$ is either smaller than $J_{b-1}$ or $K_i$ is not greater than $E_{b-2}$. If $K_i$ is smaller than $J_{b-1}$, then $J_1\cdots J_{b-1}K_if\in \R(J_1\cdots J_{b-1})$. 
  If $K_i$ is not greater than $E_{b-2}$, $J_1\cdots J_{b-2}K_i$ is not standard, so it is in $\R(J_1\cdots J_{b-2})$ by induction. Then $J_1\cdots J_{b-2}K_if\in \R(J_1\cdots J_{b-1})$. So $J_1\cdots J_b=\sum J_1\cdots J_{b-2}K_if_i\in \R(J_1\cdots J_{b-1})$.
\end{proof}
\begin{lemma}\label{lemma:straight1}
	Let $E\in \cE$, $J_a$ and $J_b$ in $\cJ$ with $J_a\prec J_b$, and suppose that $E_a$ is the largest element in $\cE(J_a)$ such that $E\leq E_a$. If $J_b$ is not greater than $E_a$, then $J_aJ_b=\sum K_if_i$ with $K_i\in \cJ$, $f_i\in \R$ such that $K_i$ is either smaller than $J_a$, or $K_i$ is not greater than $E$. 
\end{lemma}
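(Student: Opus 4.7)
My plan is to apply the polynomial identity from Lemma \ref{lem:fullrelation0} with parameters tuned exactly as in the proof of Lemma \ref{lemma:case2}, so that $J_aJ_b$ appears with coefficient $\pm 1$ and every other summand can be written $K_i f_i$ with $K_i\in\cJ$ and $f_i\in\R$. The added demand over Lemma \ref{lemma:case2} is the dichotomy ``$K_i\prec J_a$ or $K_i$ is not greater than $E$''. The first alternative will dispatch the same terms already handled in Lemma \ref{lemma:case2} (small weight or large size); the second, which is new, is exactly what Lemma \ref{lem:lrnumber} produces.

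\textbf{Setup and easy terms.} Let $h_a\geq h_b$ and $n_a,n_b$ be the sizes and weights of $J_a,J_b$, and fix the representative $E_a\in\cE(J_a)$. Set $i_0=L(E_a,J_b)$, $j_0=R(E_a,J_b)$, $l_0=i_0+j_0$; by Lemma \ref{lemma:critgreat}, the hypothesis ``$J_b$ not greater than $E_a$'' gives $n_b-wt(E_a(h_b))<l_0$. Pick witness indices $i_1\in[i_0,h_b]$ and $j_1\in[j_0,h_b]$ exactly as in the proof of Lemma \ref{lemma:case2}, and apply Lemma \ref{lem:fullrelation0} with $h=h_a$, $h'=h_b$, $m=n_a+n_b$, and normalization $a_{n_b}=1$, $a_{n_b-l}=0$ for $1\leq l\leq l_0-1$. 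Modulo $\R[h_a+1]$, the resulting identity isolates $J_aJ_b$. Three classes of other terms admit the designated $K_i\prec J_a$ immediately, just as in Lemma \ref{lemma:case2}: (i) generators of $\R[h_a+1]$ (size $>h_a$); (ii) terms with $k>n_b$, where the upper factor has size $h_a$ and weight $<n_a$; (iii) when $h_a=h_b$, terms with $k<n_a$, where the lower factor has size $h_a$ and weight $<n_a$. When $h_a>h_b$, the range in (iii) is vacuous since $l_0>n_b$.

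\textbf{Swap terms at $k=n_b$.} The only remaining terms come from non-identity shuffles $(\sigma,\sigma')$ in the inner sum at $k=n_b$. Each such term has upper minor $K^{\mathrm{up}}$ of size $h_a$ and weight $n_a$ and lower minor $K^{\mathrm{low}}$ of size $h_b$ and weight $n_b$, with at least one $u^b$ or $v^b$ displaced into the upper row. When $h_a>h_b$, I designate $K_i:=K^{\mathrm{up}}$ and prove it is not greater than $E$. Since $E_a$ is maximal in $\cE(J_a)$ over $E$, Lemma \ref{lem:lrnumber} applies with $E_b:=E$ and our $E_a$; a non-trivial $\sigma$ moving $s$ indices $u^b$ into the bottom-$i_1$ block of $K^{\mathrm{up}}$ matches hypothesis (1) and gives the strict bound $L(E,K^{\mathrm{up}})>L(E,J_a)+s-i_1$, while a non-trivial $\sigma'$ gives the symmetric strict $R$-bound from hypothesis (2). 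Combining with the equality $L(E,J_a)+R(E,J_a)=n_a-wt(E(h_a))$ (which follows from Corollary \ref{cor:lrnumber1} together with the maximality of $E_a$), these bounds yield $L(E,K^{\mathrm{up}})+R(E,K^{\mathrm{up}})>n_a-wt(E(h_a))=wt(K^{\mathrm{up}})-wt(E(h_a))$, so by Lemma \ref{lemma:critgreat}, $K^{\mathrm{up}}$ is not greater than $E$. When $h_a=h_b$ instead, Lemma \ref{lem:lrnumber} is not available, but the same Pl\"ucker-type cancellation from Lemma \ref{lem:fullrelation0} used in case 1 of Lemma \ref{lemma:case2} shows each such product precedes $J_aJ_b$ in $\cM(\cJ)$, and since the two factors have equal sizes the designated $K_i=K^{\mathrm{up}}$ is automatically $\prec J_a$.

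\textbf{Main obstacle.} The technical heart of the argument is the combinatorial matching in the $h_a>h_b$ case: verifying that the counts $(s,i_1)$ and $(s',j_1)$ coming out of the Lemma \ref{lem:fullrelation0} shuffles satisfy the hypotheses of Lemma \ref{lem:lrnumber} with the correct parameters, and that the applicable half (or both halves) of its strict bounds pushes $L(E,K^{\mathrm{up}})+R(E,K^{\mathrm{up}})$ strictly past the threshold $n_a-wt(E(h_a))$. A shuffle with $s=0$ must be paired with $s'\geq 1$ on the $v$-side (since the product is not $J_aJ_b$), and the applicable half of Lemma \ref{lem:lrnumber} then suffices. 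The supporting equality $L(E,J_a)+R(E,J_a)=n_a-wt(E(h_a))$ also merits care, but follows from the same extremal argument used in the proof of Corollary \ref{cor:lrnumber1}: slack in $L+R$ would permit a strictly larger representative in $\cE(J_a)$ dominating $E$, contradicting maximality of $E_a$.
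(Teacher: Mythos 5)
The proposal has a genuine gap in the crucial $h_a > h_b$ case, and it also elides structure of the argument that cannot be skipped.

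\textbf{The designated $K_i$ is wrong.} You set $K_i := K^{\mathrm{up}}$, the upper factor of size $h_a$ and weight $n_a$, and claim it is not greater than $E$ via Lemma \ref{lem:lrnumber}. But Lemma \ref{lem:lrnumber} explicitly requires its $K$ to have size $h < h_a$ (it bounds $L(E_b,\,\cdot\,)$ for minors assembled from a proper subset of $E_a$'s row indices). A minor of size $h_a$ does not satisfy the hypothesis, so the lemma simply does not apply to $K^{\mathrm{up}}$. More fundamentally, there are shuffle terms in which the $u$-side shuffle is trivial and only the $v$-side (or the lower row) is disturbed, so that $K^{\mathrm{up}}$ coincides with $J_a$ itself; since $E\leq E_a\in\cE(J_a)$, this $K^{\mathrm{up}}=J_a$ \emph{is} greater than $E$, directly contradicting the claim. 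The paper instead makes a dichotomy inside the $k=n_b$ terms: if the upper factor is lexicographically strictly less than $J_a$, take $K_i$ to be the \emph{upper} factor and use the first alternative; otherwise take $K_i$ to be the \emph{lower} factor (size $h_b<h_a$, for which Lemma \ref{lem:lrnumber} is available) and show it is not greater than $E$. Your proposal collapses this dichotomy and loses the case where the lower factor must carry the argument.

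\textbf{Missing structure of the relation.} You tune Lemma \ref{lem:fullrelation0} ``exactly as in Lemma \ref{lemma:case2}.'' That is not enough here, because the witness indices must be chosen relative to $||E_a(h_b)||$ rather than $J_a$, and the positions $i_2$, $j_2$ where $u^a_{i_1-i_0+1}$ and $v^a_{j_1-j_0+1}$ sit inside $J_a$ matter. The paper separates the case $n_b < wt(E_a(h_b))$ (Case 2, using a double sum over $(i,j)$ in equation \eqref{eqn:relationterms}) from $n_b \geq wt(E_a(h_b))$ (Case 3), and in Case 3 branches into four shuffle identities \eqref{eqn:relationterms51}--\eqref{eqn:relationterms54} depending on whether $i_2>i_1-i_0+1$ or $i_2=i_1-i_0+1$, and likewise for $j_2$. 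These distinctions determine which inequalities from Lemmas \ref{lemma:replace}, \ref{cor:lrnumber1}, \ref{cor:lrnumber2}, \ref{lem:lrnumber} feed into the final $L+R$ estimate for the terms with $k<n_b$. Your proposal acknowledges this combinatorial matching as the ``main obstacle'' but does not carry it out, and the single-equation setup you describe cannot isolate $J_aJ_b$ modulo $\R[h_a+1]$ while keeping all residual terms in one of the two required classes. The $h_a = h_b$ case you handle is fine, but that is essentially Lemma \ref{lemma:case2} itself; the substance of the lemma lies in the regime $h_a>h_b$, which remains unresolved.
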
 
 \begin{proof}
 	Assume
 	$$J_a=\bpartial^{n_a}(u'_{h_a},\dots,u'_{1}|v'_{1},\dots,v'_{h_a}), \quad J_b=\bpartial^{n_b}(u^b_{h_b},\dots,u^b_{1}|v^b_{1},\dots,v^b_{h_b}).$$
 	$J_a\prec J_b$, so $h_a\geq h_b$.
 	Assume
$||E_a(h_b)||=\bpartial^{m_a}(u^a_{h_b},\dots,u^a_{1}|v^a_{1},\dots,v^a_{h_b})$.
Let  $m=n_b+n_a$.
	Let $i_0=L(E_a, J_b)$, $j_0=(E_a, J_b)$ and $l_0=i_0+j_0$.
 If $i_0\neq 0$, there is $i_0\leq i_1\leq h_b$, such that $u^b_{i_1}<u^a_{i_1-i_0+1}$. If $i_0=0$, let $i_1=0$.
 If $j_0\neq 0$, there is $j_0\leq  j_1\leq h_b$, such that $v^b_{j_1}<v^a_{j_1-j_0+1}$.
If $j_0=0$, let $j_1=0$.
Since $J_b$ is not greater than $ E_a$, by Lemma \ref{lemma:critgreat},
 \begin{equation}\label{eqn:numberl0} l_0=i_0+j_0> n_b-m_a.
 \end{equation}
 By definition, $\{u^a_{h_b},\dots,u^a_{1}\}$ is a subset of $\{u'_{h_a},\dots, u'_{1}\}$ with
 $u'_{i}<u'_{i+1}$ and $u^a_{i}<u^a_{i+1}$. If we assume $u'_{i_2}=u^a_{i_1-i_0+1}$, we have $i_2\geq i_1-i_0+1$.
Similarly, $\{v^a_{h_b},\dots,v^a_{1}\}$ is a subset of $\{v'_{h_a},\dots, v'_{1}\}$ with
$v'_{j}<v'_{j+1}$ and $v^a_{i}<v^a_{j+1}$; if we assume $v'_{j_2}=v^a_{j_1-j_0+1}$, then $j_2\geq j_1-j_0+1$.

Now we prove the lemma. The proof is quite long and it is divided into three cases.
	
	\textsl{Case 1:} $h_a=h_b$. Let $a_{n_b-l}=\delta^0_l$ for $0\leq l\leq l_0-1$. 
	By Lemma \ref{lem:fullrelation0}, there are integers $a_k$, such that 
	\begin{equation}\label{eqn:relation111}
	\sum \epsilon a_{k} \left(
	\begin{array}{ccc}
	\bpartial^{m-k} (\underline{u'_{h_a},\dots,u'_{i_1-i_0+1}},u'_{i_1-i_0},\dots,u'_1&|&v'_1,\dots, v'_{j_1-j_0},\underline{v'_{j_1-j_0+1},\dots,v'_{h_a}})\\
	\bpartial^{k} (u^b_{h_b},\dots,u^b_{i_1+1},\underline{u^b_{i_1},\dots,u^b_1}&|&\underline{v^b_1,\dots, v^b_{j_1}},v^b_{j_1+1},\dots,v^b_{h_b})
	\end{array}
	\right)
	\end{equation}
 $$\in \R[h_1+1].$$
	$J_aJ_b\in \R(J_a)$ since in the above equation, 
	\begin{itemize}
		\item
		All the terms with $k=n_b$ precede $J_a$ in the lexicographic order except $J_aJ_b$ itself;
		\item
		All the terms with $k=n_b-1,\dots,n_a$ vanish since $n_a=m_a$, $a_{k}=0$;
		\item
		All the terms with $k=n_b+1,\dots, m$ precede $J_a$ since the weight of the upper $\bpartial$-list is $m-k<n_a$;
		\item
		All the terms with $k=0,\dots, n_a-1$ precede $J_a$ after exchanging the upper $\bpartial$-list and the lower $\bpartial$-list since  the weight of the lower $\bpartial$-list is $k<n_a$;
		\item
		The terms in $\R[h+1]$ precede $J_a$ since they have bigger sizes.
	\end{itemize}

	\textsl{Case 2:} $h_b<h_a$ and $n_b<m_a$. 
	
	By Lemma \ref{lem:fullrelation0}, $$\sum_{\substack{0\leq i,j\leq h_b\\i+j<2h_b}}\sum_{\sigma, \sigma'}\frac {(-1)^{i+j}\sign(\sigma)\sign(\sigma')}{i!(h_b-i)!j!(h_b-j)!}\sum \epsilon a^{i,j}_{k} $$
	\begin{equation}\label{eqn:relationterms} \left(
	\begin{array}{ccc}
	\bpartial^{m-k}(\underline{u'_{h_a},\dots,u'_{i+1}},u^b_{\sigma{(i)}}\dots,u_{\sigma(2)}^b,u^b_{\sigma(1)}&|&v^b_{\sigma'(1)},v^b_{\sigma'(2)},\dots,v^b_{\sigma(j)},\underline{v'_{j+1},\dots,v'_{h_a}})\\
	\bpartial^k(\underline{u_{\sigma(h_b)}^b,\dots,u^b_{\sigma{(i+1)}},u'_{i}\dots,u'_{1}}&|&\underline{v'_{1},\dots,v'_{j}, v^b_{\sigma'(j+1)},\dots,v^b_{\sigma'(h_b)}})
	\end{array}
	\right)
	\end{equation}
	$$  \in \R[h_a+1].$$
	Here  $a_k^{i,j}$ are integers and $a_{n_b-l}^{i,j}=\delta_{0,l}$ for $0 \leq l<2h_b -(i+j)$. The second summation is over all pairs of permutations $\sigma$ and $\sigma'$ of $\{1,\dots,h_b\}$.
	In the above equation:
	\begin{itemize}
		\item
		The terms in $\R[h_a+1]$ precede $J_a$ since they have bigger sizes.
		\item
		All the terms with $k=n_b,\dots, m$ precede $J_a$ since the weight of the upper $\bpartial$-list is $m-k<n_a$.
		\item
		The terms with $k=n_b$ are $J_aJ_b$
		and the terms with the lower $\bpartial$-lists  $$K_0=\bpartial^{n_b}(u'_{i_{h_b}},\dots,u'_{i_2},u'_{i_1}|v'_{j_1},v'_{j_2},\dots,v'_{j_{h_b}})\in \cJ.$$
		All the other terms cancel.
		By Corollary \ref{cor:lrnumber1} and \ref{cor:lrnumber2}, 
		$$L(E_{b-2},K_0)+R(E_{b-2},K_0)\geq L(E_{b-2},E_{a}(h_b))+R(E_{b-2},E_{a}(h_b))$$
		$$=m_a-m_{b-2}>n_b-m_{b-2}.$$
		By Lemma \ref{lemma:critgreat}, $K_0$ is not greater than $E_{b-2}$.
		\item
		The terms with $k<n_b$ vanish unless $2h_b-(i+j)\leq  n_b-k$. In this case, the lower $\bpartial$-lists of the terms are
		$$K_1=\bpartial^k(u_{\sigma(h_b)}^b,\dots,u^b_{\sigma{(i+1)}},u'_{s_i}\dots,u'_{s_1}|v'_{t_1},\dots,v'_{t_j}, v^b_{\sigma(j+1)},\dots,v^b_{\sigma'(h_b)}).$$
	    By Lemma \ref{lemma:replace},
		$$L(E_{b-2},K_1)\geq L(E_{b-2},K_0)-(h_b-i),\quad R(E_{b-2},K_1)\geq R(E_{b-2},K_0)-(h_b-j).$$
		So
		$$L(E_{b-2},K_1)+R(E_{b-2},K_1)>n_b-m_{b-2}-(2h_b-i-j) \geq k-m_{b-2}.$$	
		By Lemma \ref{lemma:critgreat}, $K_1$ is not greater than $E_{b-2}$. 
	\end{itemize}

	\textsl{Case 3:}
	$h_a>h_b$ and $n_b\geq m_a$.
	\begin{enumerate}
		\item  If $i_0=0$, then $j_0> 0$. Since if $j_0=0$, then $J_b$ is greater than $E_a$ and $J_1\cdots J_b$ is standard.
		
		By Equation \eqref{lem:fullrelation0}, 
		\begin{equation}\label{eqn:relationterms30}\sum_{0\leq i\leq h_b}\sum_{\substack{ j_1\geq t\\ j_2>t\\j_1+h_b\neq i+t}}\sum_{\sigma, \sigma'}\frac {(-1)^{i+t}\sign(\sigma)\sign(\sigma')}{i!(h_b-i)!t!(j_2-1-t)!}\sum_{k=0}^{m} \epsilon a^{i,t}_{k}
		\end{equation}
$$
		\bpartial^{m-k}(\underline{u_{h_a}',\dots,u_{i+1}'},u^b_{\sigma{(i)}} ,\dots,u_{\sigma(2)}^b,u^b_{\sigma(1)}|\underline{v^b_{1} ,\dots,v^b_{t} ,v'_{\sigma'(t+1)},\dots, v'_{\sigma'(j_2-1)},v'_{j_2},\dots,v'_{h}})$$
		$$\hspace{1.5cm}
		\bpartial^k(\underline{u_{\sigma(h_b)}^b,\dots,u^b_{\sigma{(i+1)}},u'_{i} , \dots,u'_{1}}|v'_{\sigma'(1)},\dots,v'_{\sigma'(t)},\underline{v^b_{t+1},\dots,v^b_{j_1}},v^b_{j_1+1},\dots,v^b_{h_b}  )
$$
		$$  \in \R[h_a+1].$$
		Here  $a_k^{i,t}$ are integers with $a_{n_b-l}^{i,t}=\delta_{0,l}$
		for $0 \leq l<h_b-i+j_1-t$, $\sigma$ are permutations of $\{1,\dots,h_b\}$  
		and $\sigma'$ are permutations of $\{1,\dots,j_2-1\}$. Next,
		\begin{equation}\label{eqn:relationterms31}\sum_{\substack{0\leq i\leq h_b\\ j_1\leq j\leq h_b}}\sum_{\substack{ j_1\geq t\\ j_2>t\\h_b+j_1\neq i+t}}\sum_{\sigma, \sigma',\sigma_2}\frac {(-1)^{i+j+t}\sign(\sigma)\sign(\sigma')\sign(\sigma_2)}{i!(h_b-i)!(j-j_1)!(h_b-j)!t!(j_2-1-t)!}\sum_{k=0}^{m}\epsilon a^{i,j,t}_{k} 
			\end{equation}
	
$$\bpartial^{m-k}(\underline{u_{h_a}',\dots,u_{i+1}'},u^b_{\sigma{(i)}} ,\dots,u_{\sigma(2)}^b,u^b_{\sigma(1)}\hspace{5cm}$$
$$\hspace{3cm}	|\underline{v'_{h_b+1} ,\dots, v'_{h_a},v^b_1, \dots, v^b_{j_1}, v^b_{\sigma'(j_1+1)}, \dots, v^b_{\sigma'(j)}} ,\dots, v^b_{\sigma'(h_b)})$$
		$$
		\bpartial^k(\underline{u_{\sigma(h_b)}^b,\dots,u^b_{\sigma{(i+1)}},u'_{i} ,\dots,u'_{1}} \hspace{6cm}$$
		$$\hspace{3cm}|v'_{\sigma_2(1)} ,\dots , v'_{\sigma_2(t)},\underline{v'_{\sigma_2(t+1)}, \dots, v'_{\sigma_2(j_2-1)}, v'_{j_2}, \dots , v'_{h_b+1}})
$$
		$$  \in \R[h_a+1].$$
		Here  $a_k^{i,j,t}$ are integers and $a_{n_b-l}^{i,j,t}=\delta_{0,l}$ for $0 \leq l<h_b -i+j-t$, $\sigma$ are permutations of $\{1,\dots,h_b\}$, $\sigma'$ are permutations of $\{h_b,\dots,j_1+1\}$,
		and $\sigma_2$ are permutations of $\{1,\dots, j_2-1\}$.
		
		We use Equation \eqref{eqn:relationterms30} if $j_2>j_1-j_0+1$ and use 
		Equation \eqref{eqn:relationterms31} if $j_2=j_1-j_0+1$.
		In the above equations:
		\begin{enumerate}
			\item
			The terms in $\R[h_a+1]$ precede $J_a$ since they have bigger sizes;
			\item
			All the terms with $k=n_b+1,\dots, m$ precede $J_a$ since the weight of upper $\bpartial$-list is $m-k<n_a$.
			\item
			The terms with $k=n_b$ are $J_aJ_b$, the terms with upper $\bpartial$-list preceding $J_a$ (the upper $\bpartial$-lists are the $\bpartial$-lists given
			by replacing some $v'_{j}$, $j\geq j_2$ in $J_a$ by some $u^b_{k}$, $k\leq j_1$),
			and the terms with the lower $\bpartial$-lists
			$$K_0=\bpartial^k(u'_{i_{h_b}},\dots,u'_{i_1}|v'_{\sigma(1)},\dots,v'_{\sigma(j_1)},v^b_{j_1+1},\dots,v^b_{h_b}).$$
			All of the other terms cancel.
			If $j_2-1<j_1$, the terms of the form $K_0$ do not appear in Equations \eqref{eqn:relationterms30}, \eqref{eqn:relationterms31}.
			Otherwise, $j_2-1\geq j_1$. In this case $j_2\geq j_1+1>j_1-j_0+1$.
			By Corollary \ref{cor:lrnumber2}, 
			\begin{equation}\label{eqn:LK0}
			L(E_{b-2},||E_a(h_b)||)\leq L(E_{b-2},K_0).
		\end{equation}
	 By Lemma \ref{lem:lrnumber},  
	 \begin{equation}\label{eqn:RK0}
	R(E_{b-2},K_0)> R(E_{b-2},||E_a(h_b))+j_1-(j_1-j_0+1).
	\end{equation}
			By Corollary \ref{cor:lrnumber1},
		\begin{equation}\label{eqn:lr300}
		L(E_{b-2},||E_a(h_b)||)+R(E_{b-2},||E_a(h_b)||)=wt(E_{a}(h_b))-wt(E_{b-2}(h_b)).
		\end{equation}
			 So by Equations \eqref{eqn:LK0}, \eqref{eqn:RK0}, \eqref{eqn:lr300}, and \eqref{eqn:numberl0},
		$$
			L(E_{b-2},K_0)+R(E_{b-2},K_0)\geq m_a-m_{b-2}+j_0>n_b-m_{b-2}.		
	$$
	By Lemma \ref{lemma:critgreat}, $K_0$ is not greater than $E_{b-2}$.
		
			\item If $j_2>j_1-j_0+1$,
			the terms with $k<n_b$ in Equation \eqref{eqn:relationterms30} vanish unless $h_b-i+j_1-t\leq  n_b-k$.  The lower $\bpartial$-lists of these terms are
			$$K_1= \bpartial^k(\underline{u_{\sigma(h_b)}^b,\dots,u^b_{\sigma{(i+1)}},u'_{i}\dots,u'_{1}}|v'_{\sigma'(1)},\dots,v'_{\sigma'(t)},\underline{v^b_{t+1},\dots,v^b_{j_1}},v^b_{j_1+1},\dots,v^b_{h_b}  )$$
			Underlined $u$ and $v$ can be any underlined $u$ and $v$ in Equation \eqref{eqn:relationterms30}, respectively.

           By Lemma \ref{lemma:replace} and Equation \eqref{eqn:LK0}, 
          \begin{equation}\label{eqn:LK1}
       L(E_{b-2}, K_1)\geq L(E_{b-2},K_0)-(h_b-i)\geq L(E_{b-2},||E_a(h_b)||)-(h_b-i).   
    \end{equation}	
           		
			Since $j_2>j_1-j_0+1$, 
		 by Lemma \ref{lem:lrnumber},
			\begin{equation}\label{eqn:RK1}
		R(E_{b-2},||E_a(h_b)||)+t-(j_1-j_0+1).
			\end{equation}
		So by Equation \eqref{eqn:lr300}, \eqref{eqn:LK1}, \eqref{eqn:RK1}, and \eqref{eqn:numberl0},
			$$L(E_{b-2}, K_1)+R(E_{b-2}, K_1)\geq m_a-m_{b-2}-(h_b-i)+t-(j_1-j_0)>k-m_{b-2}. $$
		By Lemma \ref{lemma:critgreat}, $K_1$ is not greater than $E_{b-2}$.

			\item If $j_2=j_1-j_0+1$, the terms with $k<n_b$ in Equation \eqref{eqn:relationterms31} vanish unless $h_b-i+j-t  \leq  n_b-k$. In this case, the lower $\bpartial$-lists of the terms are
			$$K_1=   \bpartial^k(\underline{u_{\sigma(h_b)}^b,\dots,u^b_{\sigma{(i+1)}},u'_{i},\dots,u'_{1}}\hspace{6cm}
			$$
			$$\hspace{4cm}|
			v'_{\sigma_2(1)} ,\dots ,v'_{\sigma_2(t)},\underline{v'_{\sigma_2(t+1)} ,\dots , v'_{\sigma_2(j_2-1)}, v'_{j_2} ,\dots , v'_{h_b+1}})$$
			Underlined $u$ and $v$ can be any underlined $u$ and $v$ in Equation \eqref{eqn:relationterms31}.

				Let 
			$$
			K'_1=\bpartial^k(\underline{u_{\sigma(h_b)}^b,\dots,u^b_{\sigma{(i+1)}},u'_{i}, \dots,u'_{1}}|
			v'_{\sigma_2(1)}, \dots, v'_{\sigma_2(j_2-1)},v'_{l_{j_2}},\dots, v'_{l_{h_b}})$$
			Here $j_2\leq l_{j_2}< l_{j_2+1}<\cdots< l_{h_b}\leq h_a$. By Lemma \ref{lemma:replace} and Equation \eqref{eqn:LK0},
			 \begin{equation}\label{eqn:LK1-1}
			L(E_{b-2}, K_1)\geq L(E_{b-2},K_0)-(h_b-i)\geq L(E_{b-2},||E_a(h_b)||)-(h_b-i).   
			\end{equation}	
			By Lemma \ref{lemma:replace}, there is some $K'_1$ such that
					\begin{equation}\label{eqn:RK1-1}
					R(E_{b-2},K_1)\geq R(E_{b-2},K'_1)-(j_2-1-t)-(j-j_1)
					\end{equation}
	 since in $K_1$ the number of $v^b_j$ with $v^b_j>v'_{j_2}$ is at most $j-j_1$. By Corollary \ref{cor:lrnumber2},
	 \begin{equation}\label{eqn:RK1-1+}
	 R(E_{b-2},K_1')\geq R(E_{b-2},||E_a(h_b)||).
	 \end{equation}
	  So by Equations \eqref{eqn:LK1-1}, \eqref{eqn:RK1-1}, \eqref{eqn:RK1-1+}, \eqref{eqn:lr300}, and \eqref{eqn:numberl0},
			\begin{eqnarray*}
			L(E_{b-2},K_1)+R(E_{b-2},K_1)&\geq& m_a-m_{b-2}-(h_b-i)-(j_2-1-t)-(j-j_1)\\
			&>& k-wt(E_{b-2}(h_b)).
		\end{eqnarray*}	
				By Lemma \ref{lemma:critgreat}, $K_1$ is not greater than $E_{b-2}$.
		
		\end{enumerate}

		\item $j_0=0$, then $i_0> 0$. The proof is similar to the case of $i_0=0$.
		\item
		$i_0> 0$ and $j_0> 0$.
		By Lemma \ref{lem:fullrelation0}, we have
	\begin{equation}\label{eqn:relationterms51} \sum_{\substack{ i_1\geq s\\ i_2>s}}\sum_{\substack{ j_1\geq t\\ j_2>t\\j_1+i_1\neq s+t}}\sum_{\sigma, \sigma'}\frac {(-1)^{s+t}\sign(\sigma)\sign(\sigma')}{s!(i_2-1-s)!t!(j_2-1-t)!}\sum_{k=0}^{m} \epsilon a^{s,t}_{k} 
		\end{equation}
	$$
		\bpartial^{m-k}(\underline{u'_{h} ,\dots,u'_{i_2} ,u'_{\sigma(i_2-1)},\dots, u'_{\sigma(s+1)},u_{s}^b,\dots,u^b_{1}}|\underline{v^b_{1} ,\dots,v^b_{t} ,v'_{\sigma'(t+1)},\dots, v'_{\sigma'(j_2-1)},v'_{j_2},\dots,v'_{h}})
		$$
		$$
		\bpartial^k(u^b_{h_b},\dots,u^b_{i_1+1},\underline{u^b_{i_1},\dots,u^b_{s+1}},u'_{\sigma(s)},\dots,u'_{\sigma(1)}|v'_{\sigma'(1)},\dots,v'_{\sigma'(t)},\underline{v^b_{t+1},\dots,v^b_{j_1}},v^b_{j_1+1},\dots,v^b_{h_b})
	$$
		$$  \in \R[h_a+1].$$
		Here $a_k^{s,t}$ are integers and $a_{n_b-l}^{s,t}=\delta_{0,l}$
		for $0 \leq l<i_1-s+j_1-t$, $\sigma$ are permutations of $\{1,\dots,i_2-1\}$,  
		and $\sigma'$ are permutations of $\{1,\dots,j_2-1\}$. 
		
	\begin{equation}\label{eqn:relationterms52}\sum_{\substack{ i_1\geq s\\ i_2>s}}\sum_{\substack{j\geq j_1\geq t\\ j_2>t\\ i_1+j_1>s+t}}\sum_{\sigma, \sigma',\sigma_2}\frac {(-1)^{j+s+t}\sign(\sigma)\sign(\sigma')\sign(\sigma_2)}{(j-j_1)!(h_b-j)!s!(i_2-1-s)!t!(j_2-1-t)!}\sum_{k=0}^{m}\epsilon a^{j,s,t}_{k} \end{equation}
$$\bpartial^{n-k}(\underline{u'_{h} ,\dots, u'_{i_2} ,u'_{\sigma(i_2-1)},\dots, u'_{\sigma(s+1)},u_{s}^b,\dots, u^b_{1}}\hspace{4cm}$$
$$\hspace{4cm}|\underline{v'_{h_b+1},\dots, v'_{h_a},v^b_1,\dots, v^b_{j_1}, v^b_{\sigma'(j_1+1)},\dots, v^b_{\sigma'(j)}}, \dots , v^b_{\sigma'(h_b)})
$$
$$
		\bpartial^k(u^b_{h_b},\dots,u^b_{i_1+1},\underline{u^b_{i_1},\dots,u^b_{s+1}},u'_{\sigma(s)},\dots,u'_{\sigma(1)}
		\hspace{4cm}$$
		$$\hspace{4cm}|
		v'_{\sigma_2(1)}, \dots, v'_{\sigma_2(t)},\underline{v'_{\sigma_2(t+1)},\dots, v'_{\sigma_2(j_2-1)}, v'_{j_2},\dots, v'_{h_b+1}})
$$
		$$  \in \R[h_a+1].$$
		Here  $a_k^{j,s,t}$ are integers and $a_{n_b-l}^{i,j,s,t}=\delta_{0,l}$ for $0 \leq l<(i_1+j-s-t)$,
		$\sigma'$ are permutations of $\{h_b,\dots,j_1+1\}$,
		$\sigma$ are permutations of $\{1,\dots,i_2-1\}$, and $\sigma_2$ are permutations of $\{1,\dots, j_2-1\}$.
		
			\begin{equation}\label{eqn:relationterms53} \sum_{\substack{i\geq i_1\geq s\\ i_2>s}}\sum_{\substack{j_1\geq t\\ j_2>t\\ i_1+j_1>s+t}}\sum_{\sigma, \sigma',\sigma_1}\frac {(-1)^{i+s+t}\sign(\sigma)\sign(\sigma')\sign(\sigma_1)}{(i-i_1)!(h_b-i)!s!(i_2-1-s)!t!(j_2-1-t)!}\sum_{k=0}^{m}\epsilon a^{i,s,t}_{k} 
			\end{equation}
$$
		\bpartial^{n-k}(u^b_{\sigma(h_b)},\dots,\underline{u^b_{\sigma(i)},\dots, u^b_{\sigma(i_1+1)},u^b_{i_1}, \dots, u^b_{1},u'_{h_a},\dots,  u'_{h_b+1}}
		\hspace{4cm}$$
		$$	\hspace{4cm}
		|\underline{v^b_{1} ,\dots,v^b_{t} ,v'_{\sigma'(t+1)},\dots, v'_{\sigma'(j_2-1)},v'_{j_2},\dots,v'_{h}})
		$$
		$$
		\bpartial^k(\underline{u'_{h_b+1},\dots, u'_{i_2},u'_{\sigma_1(i_2-1)}, \dots, u'_{\sigma_1(s+1)}},u'_{\sigma_1(s)}, \dots, u'_{\sigma_1(1)}	\hspace{4cm}
		$$
		$$	\hspace{4cm}|v'_{\sigma'(1)},\dots,v'_{\sigma'(t)},\underline{v^b_{t+1},\dots,v^b_{j_1}},v^b_{j_1+1},\dots,v^b_{h_b}  )
	$$
		$$  \in \R[h_a+1].$$
		Here  $a_k^{i,s,t}$ are integers and $a_{n_b-l}^{i,s,t}=\delta_{0,l}$ for $0 \leq l<(i+j_1-s-t)$,
		$\sigma$ are permutations of $\{h_b,\dots, i_1+1\}$, 
		$\sigma_1$ are permutations of $\{1,\dots,i_2-1\}$, and $\sigma'$ are permutations of $\{1,\dots, j_2-1\}$.

		\begin{equation}\label{eqn:relationterms54}\sum_{\substack{i\geq i_1\geq s\\ i_2>s}}\sum_{\substack{j\geq j_1\geq t\\ j_2>t\\ i_1+j_1>s+t}}\sum_{\sigma, \sigma',\sigma_1,\sigma_2}\frac {(-1)^{i+j+s+t}\sign(\sigma)\sign(\sigma')\sign(\sigma_1)\sign(\sigma_2)}{(i-i_1)!(h_b-i)!(j-j_1)!(h_b-j)!s!(i_2-1-s)!t!(j_2-1-t)!}\sum_{k=0}^{m}\epsilon a^{i,j,s,t}_{k} 
		\end{equation}
$$
		\bpartial^{n-k}(u^b_{\sigma(h_b)},\dots,\underline{u^b_{\sigma(i)},\dots, u^b_{\sigma(i_1+1)},u^b_{i_1}, \dots, u^b_{1},u'_{h_a},\dots,  u'_{h_b+1}}\hspace{4cm}$$
		$$\hspace{4cm}
		|\underline{v'_{h_b+1},\dots, v'_{h_a},v^b_1,\dots, v^b_{j_1}, v^b_{\sigma'(j_1+1)},\dots, v^b_{\sigma'(j)}},\dots,  v^b_{\sigma'(h_b)})
		$$
		$$
		\bpartial^k(\underline{u'_{h_b+1},\dots, u'_{i_2},u'_{\sigma_1(i_2-1)}, \dots, u'_{\sigma_1(s+1)}},u'_{\sigma_1(s)} ,\dots, u'_{\sigma_1(1)}\hspace{4cm}$$
		$$\hspace{4cm}|v'_{\sigma_2(1)}, \dots, v'_{\sigma_2(t)},\underline{v'_{\sigma_2(t+1)},\dots, v'_{\sigma_2(j_2-1)}, v'_{j_2},\dots, v'_{h_b+1}})
$$
		$$  \in \R[h_a+1].$$
		Here  $a_k^{i,j,s,t}$ are integers and $a_{n_b-l}^{i,j,s,t}=\delta_{0,l}$ for $0 \leq l<(i+j-s-t)$,
		$\sigma$ are permutations of $\{h_b,\dots, i_1+1\}$, $\sigma'$ are permutations of $\{h_b,\dots,j_1+1\}$,
		$\sigma_1$ are permutations of $\{1,\dots,i_2-1\}$, and $\sigma_2$ are permutations of $\{1,\dots, j_2-1\}$.
		\begin{itemize}
			\item We use Equation (\ref{eqn:relationterms51}) when $i_2>i_1-i_0+1$ and $j_2>j_1-j_0+1$;
			\item We use Equation (\ref{eqn:relationterms52}) when $i_2>i_1-i_0+1$ and $j_2=j_1-j_0+1$;
			\item We use Equation (\ref{eqn:relationterms53}) when $i_2=i_1-i_0+1$ and $j_2>j_1-j_0+1$;
			\item We use Equation (\ref{eqn:relationterms54}) when $i_2=i_1-i_0+1$ and $j_2=j_1-j_0+1$.
		\end{itemize}
		In the above relations:
		\begin{enumerate}
			\item
			The terms in $\R[h_a+1]$ precede $J_a$ since they have bigger sizes.
			\item
			All the terms with $k=n_b+1,\dots, n$ precede $J_a$ since the weight of the upper $\bpartial$-list is $n-k<n_a$.
			\item
			The terms with $k=n_b$ are $J_aJ_b$, the terms with the upper $\bpartial$-list preceding $J_a$ (the upper $\bpartial$-lists are the $\bpartial$-lists given by replacing some $u'_{i}$, $i\geq i_2$ in $J_a$ by some $u^b_{k}$, $k\leq i_1$ or $v'_j$, $j\geq j_2$ by $v^b_{k}$, $k\leq j_1$), 
			and the terms with the lower $\bpartial$-lists
			$$K_0=\bpartial^{n_b}(u_{h_b}^b,\dots,u_{i_1+1}^b, u'_{\sigma_1(i_1)},\dots, u'_{\sigma_1(1)}|v'_{\sigma_2(1)},\dots, v'_{\sigma_2(j_1)},v_{j_1+1}^b,\dots,u_{h_b}^b).$$
			All of the other terms cancel.
			By Lemma \ref{lem:lrnumber}, 
			$$L(E_{b-2},K_0)>L(E_{b-2},||E_a(h)||)+i_1-(i_1-i_0+1);$$
			$$R(E_{b-2},K_0)>R(E_{b-2},||E_a||(h))+j_1-(j_1-j_0+1).$$
			By the above two inequalities,
			\begin{eqnarray*}L(E_{b-2},K_0)+R(E_{b-2},K_0)&\geq & L(E_{b-2},||E_a(h_b)||)+R(E_{b-2},||E_a(h_b)||)+i_0+j_0\\
				(\text{by Corollary \ref{cor:lrnumber1}})\quad\quad
				&=&wt(E_a(h_b))-wt(E_{b-2})+i_0+j_0\\
			(\text{by Equation \eqref{eqn:numberl0}}\quad\quad	&>&wt(E_b)-wt(E_{b-2}(h_b)).
			\end{eqnarray*}
						
			By Lemma \ref{lemma:critgreat}, $K_0$ is not greater than $E_{b-2}$.
	
			\item  When $i_2>i_1-i_0+1$ and $j_2>j_1-j_0+1$;
			The terms with $k<n_b$ in Equation \eqref{eqn:relationterms51} vanishes  unless $i_1+j_1-s-t\leq  n_b-k$. In this case, the lower $\bpartial$-lists of the terms are
			$$K_1=\bpartial^k(u^b_{h_b},\dots,u^b_{i_1+1},\underline{u^b_{i_1},\dots,u^b_{s+1}},u'_{\sigma(s)},\dots,u'_{\sigma(1)}$$
			$$\hspace{4cm}|v'_{\sigma'(1)},\dots,v'_{\sigma'(t)},\underline{v^b_{t+1},\dots,v^b_{j_1}},v^b_{j_1+1},\dots,v^b_{h_b}  )$$
			The underlined $u$ and $v$ in $K_1$ can be any underlined $u$ and $v$ in Equation \eqref{eqn:relationterms51}. By Lemma \ref{lem:lrnumber},
			$$L(E_{b-2},K_1)> L(E_{b-2},E_a(h_b))+s-(i_1-i_0+1);$$
			$$R(E_{b-2},K_1)> R(E_{b-2},E_a(h_b))+t-(j_1-j_0+1);$$
			\begin{eqnarray*}
				L(E_{b-2},K_1)+R(E_{b-2},K_1)&\geq& L(E_{b-2},E_a(h_b))+R(E_{b-2},E_a(h_b))\\
				&&+k-n_b+i_0+j_1\\
				(\text{by Corollary \ref{cor:lrnumber1}}) &=&wt(E_a(h_b))-wt(E_{b-2}(h_b))+k-n_b+i_0+j_0\\
			(\text{by Equation \eqref{eqn:numberl0}}\,	&>& k-wt(E_{b-2}(h_b))
			\end{eqnarray*}
			By Lemma \ref{lemma:critgreat}, $K_1$ is not greater than $E_{b-2}$.
	
			\item \label{enum:itemd} When $i_2>i_1-i_0+1$ and $j_2=j_1-j_0+1$, the terms with $k<n_b$ are the terms in Equation \eqref{eqn:relationterms52}, such that $i_1+j-s-t\leq  n_b-k$. 			
			$$
			K_1=\bpartial^k(u^b_{h_b},\dots,u^b_{i_1+1},\underline{u^b_{i_1},\dots,u^b_{s+1}},u'_{\sigma(s)},\dots,u'_{\sigma(1)}$$
			$$\hspace{4cm} |
			v'_{\sigma_2(1)} ,\dots, v'_{\sigma_2(t)},\underline{v'_{\sigma_2(t+1)},\dots, v'_{\sigma_2(j_2-1)}, v'_{j_2},\dots, v'_{h_b+1}})$$
			The underlined $u$ and $v$ in $K_1$ can be any underlined $u$ and $v$ in Equation \eqref{eqn:relationterms51}. By Lemma \ref{lem:lrnumber}
			\begin{equation}\label{eqn:LK1-2}
			L(E_{b-2},K_1)> L(E_{b-2},||E_a(h_b)||)+s-(i_1-i_0+1).
			\end{equation}
			Let 
			$$
			K'_1=\bpartial^k(u^b_{h_b},\dots,u^b_{i_1+1},\underline{u^b_{i_1},\dots,u^b_{s+1}},u'_{\sigma(s)},\dots,u'_{\sigma(1)}$$
			$$\hspace{4cm}|
			v'_{\sigma_2(1)} ,\dots ,v'_{\sigma_2(j_2-1)},v'_{l_{j_2}},\dots, v'_{l_{h_b}})$$
			Here $j_2\leq l_{j_2}< l_{j_2+1}<\cdots< l_{h_b}\leq h_a$. By Lemma \ref{lemma:replace}, there is some $K'_1$ such that
			\begin{equation}\label{eqn:RK1-2}
			R(E_{b-2},K_1)\geq R(E_{b-2},K'_1)-(j_2-1-t)-(j-j_1)
			\end{equation}
		 since in $K_1$ the number of $v^b_j$ with $v^b_j>v'_{j_2}$ is at most $j-j_1$. By Corollary \ref{cor:lrnumber2}, \begin{equation}\label{eqn:RK1-2+}
		 	R(E_{b-2},K_1')\geq R(E_{b-2},||E_a(h_b)||).
		 	\end{equation}
		 	 So by Equation \eqref{eqn:LK1-2}, \eqref{eqn:RK1-2}, and \eqref{eqn:RK1-2+},
						\begin{eqnarray*}
				L(E_{b-2},K_1)+R(E_{b-2},K_1)&\geq& L(E_{b-2},||E_a(h_b)||)+R(E_{b-2},||E_a(h_b)||)\\
				& &+s+t-(i_1-i_0)-(j-j_0)\\
				(\text{by Corollary \ref{cor:lrnumber1}})&\geq& wt(E_a(h_b))-wt(E_{b-2}(h_b))+k-n_b+i_0+j_0\\
			(\text{by Equation \eqref{eqn:numberl0}})	&>& k-wt(E_{b-2}(h_b)).
			\end{eqnarray*}
			
			By Lemma \ref{lemma:critgreat}, $K_1$ is not greater than $E_{b-2}$.
			\item  When $i_2=i_1-i_0+1$ and $j_2>j_1-j_0+1$ and $k<n_b$, the proof is similar to the proof of case \eqref{enum:itemd}.
			
			\item  When $i_2=i_1-i_0+1$ and $j_2=j_1-j_0+1$, the terms with $k<n_b$ are the terms in Equation \eqref{eqn:relationterms54}, such that $i+j-s-t\leq  n_b-k$. In this case, the lower $\bpartial$-lists of the terms are
			$$K_1=\bpartial^k(\underline{u'_{h_b+1},\dots, u'_{i_2},u'_{\sigma_1(i_2-1)}, \dots, u'_{\sigma_1(s+1)}},u'_{\sigma_1(s)}, \dots, u'_{\sigma_1(1)} $$
			$$\hspace{4cm}|
			v'_{\sigma_2(1)} ,\dots, v'_{\sigma_2(t)},\underline{v'_{\sigma_2(t+1)},\dots, v'_{\sigma_2(j_2-1)}, v'_{j_2},\dots, v'_{h_b+1}})$$
			The underlined $u$ and $v$ in $K_1$ can be any underlined $u$ and $v$ in Equation \eqref{eqn:relationterms54}.
			Let 
			$$
			K'_1=\bpartial^k(u'_{k_{h_b}},\dots,
			u'_{k_{i_2}},u'_{\sigma(i_2-1)},\dots,u'_{\sigma(1)}|
			v'_{\sigma_2(1)}, \dots ,v'_{\sigma_2(j_2-1)},v'_{l_{j_2}},\dots, v'_{l_{h_b}}).$$
			Here $i_2\leq k_{i_2}< k_{i_2+1}<\cdots< k_{h_b}\leq h_a$ and $j_2\leq l_{j_2}< l_{j_2+1}<\cdots< l_{h_b}\leq h_a$. By Lemma \ref{lemma:replace}, there is some $K'_1$ such that 
			\begin{eqnarray}\label{eqn:LK1-3}
			R(E_{b-2},K_1)&\geq& R(E_{b-2},K'_1)-(j_2-1-t)-(j-j_1);\\
			\label{eqn:RK1-3}
			L(E_{b-2},K_1)&\geq& L(E_{b-2},K'_1)-(i_2-1-s)-(i-i_1).
			\end{eqnarray}
			since in $K_1$ the number of $v^b_l$ with $v^b_l>v'_{j_2}$ is at most $j-j_1$ and the number of $u^b_l$ with $u^b_l>u'_{i_2}$ is at most $i-i_1$ . By Corollary \ref{cor:lrnumber2},
			\begin{equation}\label{eqn:LRK1-3+}
		R(E_{b-2},K_1')\geq R(E_{b-2},||E_a(h_b)||),\quad L(E_{b-2},K_1')\geq L(E_{b-2},||E_a(h_b)||).
			\end{equation}
			By Equations \eqref{eqn:LK1-3}, \eqref{eqn:RK1-3}, and \eqref{eqn:LRK1-3+},
			\begin{eqnarray*}
				L(E_{b-2},K_1)+R(E_{b-2},K_1)&\geq& L(E_{b-2},||E_a(h_b)||)+R(E_{b-2},||E_a(h_b)||)\\
				&& +s+t-(i-i_0)-(j-j_0)\\
					(\text{by Corollary \ref{cor:lrnumber1}})&\geq& wt(E_a(h_b))-wt(E_{b-2}(h_b))+k-n_b+i_0+j_0\\
				(\text{by Equation \ref{eqn:numberl0}})	&>& k-wt(E_{b-2}(h_b)).
			\end{eqnarray*}
			By Lemma \ref{lemma:critgreat}, $K_1$ is not greater than $E_{b-2}$.
		\end{enumerate}
	\end{enumerate}
	\end{proof}

\end{document}